\begin{document}

\newtheorem{thm}{Theorem}[section]
\newtheorem{defn}[thm]{Definition}
\newtheorem{lem}[thm]{Lemma}
\newtheorem{prop}[thm]{Proposition}
\newtheorem{cor}[thm]{Corollary}
\newtheorem{rmk}[thm]{Remark}
\newtheorem{conv}[thm]{Convention}
\newtheorem{conj}[thm]{Conjecture}

\renewcommand{\qed}{\hfill \mbox{\raggedright \rule{.1in}{.1in}}}
\def\co{\colon\thinspace}

\title{On the coarse-geometric detection of subgroups}
\author{Diane M. Vavrichek}
\date{}
\maketitle

\abstract{We generalize \cite{Vav_qlines} to give sufficient conditions, primarily on coarse geometry, to ensure that a subset of a Cayley graph is a finite Hausdorff distance from a subgroup.
Using this result, we prove a partial converse to the Flat Torus Theorem for CAT(0) groups.
Also using this result, we give sufficient conditions for subgroups and splittings to be invariant under quasi-isometries.
}

\section{Introduction}

The Flat Torus Theorem is a well-known result in Geometric Group Theory, which deduces a geometric property of a metric space from an algebraic property of a group that acts nicely on that space.
It says that if $G$ is a group acting geometrically on a CAT(0) space $X$, and $G$ contains a free abelian subgroup of rank $n$, then $X$ must contain an $n$--flat (on which the subgroup acts cocompactly).
It is a well-known question going back to Gromov \cite[Section 6.B$_3$]{Gromov_asymptotic_invariants} whether the converse to this theorem holds, i.e. whether the existence of an $n$--flat in $X$ implies the existence of a free abelian subgroup of rank $n$ in $G$.
In the following we prove this converse, given some assumptions on the flat.
In addition, we show that, up to finite Hausdorff distances, the abelian subgroup we obtain acts cocompactly on the given flat, and give examples where this conclusion is false without our assumptions on the flat.

Specifically, if $G$ is a finitely generated group acting geometrically on a CAT(0) space $X$, and $F$ is an isometrically embedded copy of Euclidean space $\mathbb{E}^n$ in $X$ that satisfies three conditions on complementary components (including that the complement of a uniform neighborhood of $F$ has at least three components that are unbounded away from $F$), then we show that $G$ contains a subgroup isomorphic to $\mathbb{Z}^n$, with any orbit a finite Hausdorff distance from $F$ (Theorem \ref{flat_torus_converse}).
This conclusion is false if any one of our three hypotheses are removed.

In developing the tools needed to prove this theorem, we show that certain, mainly coarse-geometric conditions on a subset of the Cayley graph of a group imply that that subset is a finite Hausdorff distance from a subgroup (Theorem \ref{subgroup_thm}).
We go on to show that related hypotheses imply that certain types of subgroups and splittings are invariant under quasi-isometries (Theorem \ref{main_thm} and Corollary \ref{splitting_cor} respectively).
\\

We shall proceed by giving careful statements of these results.
Recall that a group action is said to be geometric if it is a properly discontinuous and cocompact action by isometries.
A geodesic space is said to be CAT(0) if geodesic triangles are ``thinner'' than their comparison triangles in the Euclidean plane (see, for instance, \cite{BridsonHaefliger}).
The Flat Torus Theorem states that if $G$ is a group acting geometrically on a CAT(0) space $X$, and $H \cong \mathbb{Z}^n$ is a subgroup of $G$, then $X$ contains an isometrically embedded copy of $\mathbb{E}^n$, on which $H$ acts with a torus quotient.  
(See \cite[Chapter II.7]{BridsonHaefliger}.)

In order to state our partial converse to this result, we must define a few terms.
Let $X$ be a metric space and let $Y,Z \subseteq X$.
We say that a neighborhood of $Z$ is uniform if it is an $r$--neighborhood of $Z$, for some $r \geq 0$.
For any $r \geq 0$, we say that a component of the complement of $N_r(Y)$ is shallow if it is contained in a uniform neighborhood of $N_r(Y)$, and we say it is deep otherwise.
$Y$ is said to satisfy the deep condition if, for every $r\geq 0$, there is some $m_0 \geq 0$ such that the $m_0$-neighborhood of each deep component of $(X-N_r(Y))$ contains $N_r(Y)$.
We say that $Y$ satisfies the shallow condition if, for every $r \geq 0$, there is some $m_1 \geq 0$ such that the $m_1$-neighborhood of $N_r(Y)$ contains all shallow components of the complement of $N_r(Y)$.
We say that $Y$ satisfies the 3--separating condition if it has at least three deep complementary components.
We use $d_{Haus}$ to denote Hausdorff distance.
\\

\noindent {\bf Theorem \ref{flat_torus_converse}. }{\it
Let $X$ be a CAT(0) space, and let $G$ be a finitely generated group acting geometrically on $X$.
Suppose that $X$ contains an isometrically embedded copy $F$ of $\mathbb{E}^n$ that has a uniform neighborhood that satisfies the deep, shallow and 3--separating conditions.

Then $G$ contains a subgroup $H \cong \mathbb{Z}^n$, such that for any $x_0 \in X$, 
$$d_{Haus}(Hx_0, F)<\infty.$$
}

\begin{rmk}\label{counterexs}
The conclusion in Theorem \ref{flat_torus_converse} is false without the deep, shallow and 3--separating hypotheses.
For example, consider the free group on $m$ generators, $F_m$, together with its action on its standard Cayley graph, $\mathscr{C}(F_m)$, which is a regular $2m$-valent tree.
Then any geodesic line in $\mathscr{C}(F_m)$ is an isometrically embedded copy of $\mathbb{R} \cong \mathbb{E}^1$, that satisfies the shallow and 3--separating conditions, but not the deep condition.
However, this line corresponds to a subgroup if and only if it is periodic with respect to its edge labels in the standard generating set.

The 3--separating hypothesis is also necessary in general.  
Consider, for instance, $\mathbb{Z}^2$, together with the standard action on $\mathbb{R}^2$.
Any line in $\mathbb{R}^2$ is an isometrically embedded copy of $\mathbb{E}^1$, and satisfies the deep and shallow conditions.
However, a line corresponds to a subgroup of $\mathbb{Z}^2$ if and only if it has rational slope.
(Note that both this and the previous example can be generalized to examples with isometrically embedded copies of $\mathbb{E}^n$ for any $n$, by taking products of the groups with $\mathbb{Z}^{n-1}$ and the spaces with $\mathbb{R}^{n-1}$.)

Finally, Lemma \ref{shallow_for_subgroups} shows that the conclusion of the theorem is necessarily false if the shallow condition is not satisfied.
\end{rmk}

Partial converses to the Flat Torus Theorem have appeared in \cite{Hruska2}, \cite{CapraceHaglund} and \cite{CapraceMonod_discrete}.
In Theorem 3.7 of \cite{Hruska2}, Hruska shows the converse to the Flat Torus Theorem, assuming that $X$ has an ``isolated flats'' property.
We note that Theorem \ref{flat_torus_converse} does not require that the flats of $X$ are isolated, or even that the copy of $\mathbb{E}^n$ is a maximal flat subspace of $X$.

The main result of \cite{CapraceHaglund} also overlaps with Theorem \ref{flat_torus_converse}.
In this paper, Caprace and Haglund show, in particular, that if $W$ is a Coxeter group and the Davis complex of $W$ contains an isometrically embedded copy of $\mathbb{E}^n$ then $W$ contains a subgroup that is isomorphic to $\mathbb{Z}^n$.

Also, Caprace and Monod prove in Theorem 3.8 of \cite{CapraceMonod_discrete} that if $X$ is a proper CAT(0) symmetric space with cocompact isometry group that acts minimally on $X$, and $X \cong \mathbb{R}^n \times X'$, then any lattice in $Isom(X)$ contains a $\mathbb{Z}^n$ subgroup that acts cocompactly on the Euclidean factor.
\\

We shall denote by $\mathscr{C}(G)$ the Cayley graph of a finitely generated group $G$.  
We say that a subset $Y \subseteq \mathscr{C}(G)$ satisfies the noncrossing condition if there is some $k>0$ such that, for all $g \in G$, $gY$ is contained in the $k$--neighborhood of a deep component of the complement of $Y$.

The following theorem about ``subgroup detection'' is the main ingredient used in the proof of Theorem \ref{flat_torus_converse}.
\\

\noindent {\bf Theorem \ref{subgroup_thm}. }{\it
Let $G$ be a finitely generated group, let $Y$ be a subgraph of $\mathscr{C}(G)$ that satisfies the deep, shallow, 3--separating and noncrossing conditions.  Then $G$ contains a subgroup $H$ such that 
$$d_{Haus}(Y,H)<\infty.$$
}

We note that in general, $H$ will be badly distorted in $G$.  In particular, we will not typically get that $H$ is quasi-isometrically embedded in $G$.
In Section \ref{algebraic_invariants}, we will see that we can still detect certain algebraic properties of $H$ from geometric properties of $Y$.  

The proof of Theorem \ref{subgroup_thm} makes crucial use of work of Papasoglu that appears in \cite{Papa_asym_top}.

In the case that $Y$ is a ``uniformly distorted'' copy of $\mathbb{R}$, it was shown in \cite{Papa_qlines} and \cite{Vav_qlines} that $Y$ satisfies the deep and noncrossing conditions as long as $G$ is finitely presented and one-ended.
Proposition 3.5 of \cite{Vav_qlines} gives the conclusion of Theorem \ref{subgroup_thm} in the special case that these assumptions hold.

We conjecture that the noncrossing condition often holds for connected, 3--separating subspaces; see Conjecture \ref{noncrossing_conj}.
However, Remark \ref{counterexs} essentially shows that the conclusion of Theorem \ref{subgroup_thm} is false without the deep, shallow and 3--separating assumptions.

We show in Sections \ref{terminology_section} and \ref{final_section} that, with the exception of the noncrossing condition, the hypotheses of Theorem \ref{subgroup_thm} are invariant under quasi-isometries, and correspond to certain algebraic conditions for subgroups.  
In addition, we discuss a theory of ``coarse isometries'' in Section \ref{ud_section} that allows us to conclude that finitely generated subgroups that correspond under a quasi-isometry, up to finite Hausdorff distance, are quasi-isometric themselves.
This allows us to conclude that, up to the satisfaction of the noncrossing condition, certain subgroups are preserved under quasi-isometries, in the following strong sense.
The notion of the number of coends of a subgroup is originally due to Kropholler and Roller, and is discussed in Section \ref{final_section}.
\\

\noindent {\bf Theorem \ref{main_thm}. }{\it
Let $G$ and $G'$ be finitely generated groups and let $f\co \mathscr{C}(G) \to \mathscr{C}(G')$ be a quasi-isometry.
Suppose that $H$ is a subgroup of $G$ that has at least three coends in $G$, and, for any infinite index subgroup $K$ of $H$, $K$ has only one coend in $G$.

If sufficiently large uniform neighborhoods of $f(H)$ in $\mathscr{C}(G')$ satisfy the noncrossing condition, then $G'$ contains a subgroup $H'$ such that 
$$d_{Haus}(H', f(H))<\infty.$$
In addition, $H$ is finitely generated if and only if $H'$ is finitely generated, and in this case $H$ is quasi-isometric to $H'$.
}\\

This theorem generalizes the following result from \cite{Vav_qlines}.

\begin{thm}\label{qlines1}
{\rm {\bf \cite{Vav_qlines}}}
Let $f \co \mathscr{C}(G) \to \mathscr{C}(G')$ be a quasi-isometry between one-ended, finitely presented groups, and suppose that $G$ contains a two-ended subgroup $H$ that has at least three coends in $G$.
Then there is a subgroup $H' \cong \mathbb{Z}$ of $G'$ such that
$$d_{Haus}(H', f(H))<\infty.$$
\end{thm}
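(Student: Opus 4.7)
The plan is to derive Theorem \ref{qlines1} as a special case of Theorem \ref{main_thm} applied to the subgroup $H$. The first step is to verify the algebraic hypotheses of that theorem. Because $H$ is two-ended it is virtually infinite cyclic, so every infinite index subgroup $K$ of $H$ is finite. Since $G$ is one-ended, any finite subgroup of $G$ has exactly one coend in $G$ (the number of coends of a finite subgroup equals the number of ends of the ambient group). Combined with the hypothesis that $H$ itself has at least three coends in $G$, this provides the full algebraic input required by Theorem \ref{main_thm}.

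Next I would verify the geometric hypothesis, namely that sufficiently large uniform neighborhoods of $f(H)$ in $\mathscr{C}(G')$ satisfy the noncrossing condition. Since $H$ is two-ended it sits in $\mathscr{C}(G)$ as a uniformly distorted copy of $\mathbb{R}$, and the quasi-isometry $f$ carries this to a uniformly distorted copy of $\mathbb{R}$ in $\mathscr{C}(G')$. As the excerpt recalls, work of Papasoglu in \cite{Papa_qlines} together with \cite{Vav_qlines} shows that any such uniformly distorted copy of $\mathbb{R}$ in the Cayley graph of a finitely presented one-ended group satisfies the noncrossing condition, and the condition passes to sufficiently large uniform neighborhoods after enlarging the noncrossing constant.

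With both sets of hypotheses verified, Theorem \ref{main_thm} produces a subgroup $H'' \leq G'$ with $d_{Haus}(H'', f(H)) < \infty$, and further asserts that $H''$ is finitely generated and quasi-isometric to $H$ (using that $H$ is finitely generated, being two-ended). Since $H$ is two-ended and $H''$ is quasi-isometric to $H$, the subgroup $H''$ is two-ended and hence virtually infinite cyclic. Choosing any finite index subgroup $H' \leq H''$ with $H' \cong \mathbb{Z}$ gives $d_{Haus}(H', H'') < \infty$, and the triangle inequality yields $d_{Haus}(H', f(H)) < \infty$, as required.

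The principal obstacle is the verification of the noncrossing condition for uniform neighborhoods of $f(H)$; this step is not formal and depends on genuine analysis of how a coarsely embedded line in a finitely presented one-ended group can interact with its translates, drawn from the earlier quasi-line results of Papasoglu and Vavrichek. Once that analysis is in place, the remainder of the argument is bookkeeping around Theorem \ref{main_thm}.
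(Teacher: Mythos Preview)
Your proposal is correct and matches the paper's own treatment: the paper does not give a standalone proof of Theorem \ref{qlines1} here (it is cited from \cite{Vav_qlines}), but immediately after stating it the paper indicates exactly the derivation you outline, noting that ``the noncrossing condition is satisfied if $G$ (hence $G'$) is finitely presented and one-ended, and the hypothesis about subgroups of $H$ is equivalent to $G$ being one-ended.'' Your verification of the algebraic hypothesis (infinite index subgroups of a two-ended $H$ are finite, and finite subgroups have one coend since $e(G)=1$), your appeal to the Papasoglu/Vavrichek quasi-line results for the noncrossing condition, and your final passage from a two-ended $H''$ to an infinite cyclic $H'$ are all in line with how the paper frames Theorem \ref{qlines1} as the special case recovered by Theorem \ref{main_thm}.
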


In the setting above, the noncrossing condition is satisfied if $G$ (hence $G'$) is finitely presented and one-ended, and the hypothesis about subgroups of $H$ is equivalent to $G$ being one-ended.

In general, Theorem \ref{main_thm} is false without the hypotheses on coends.
To see this, consider the examples given in Remark \ref{counterexs}.  
If $\gamma$ and $\gamma'$ denote geodesics in the tree $\mathscr{C}(F_2)$ that are periodic and aperiodic respectively, with respect to edge labelings, then note that there is an isometry $f \co \mathscr{C}(F_m) \to \mathscr{C}(F_m)$ that interchanges $\gamma$ and $\gamma'$.
The line $\gamma$ is a finite Hausdorff distance from an infinite cyclic subgroup $H$ of $F_m$, and $f(\gamma) = \gamma'$ is an infinite Hausdorff distance from any subgroup of $F_m$.
However, $F_m$ is not one-ended, hence the hypothesis in Theorem \ref{main_thm} about infinite index subgroups of $H$ is not satisfied.

Also consider $\mathscr{C}(\mathbb{Z}^2)$ embedded in $\mathbb{R}^2$ in the standard way, and let $f \co \mathscr{C}(\mathbb{Z}^2) \to \mathscr{C}(\mathbb{Z}^2)$ be a quasi-isometry given by rotation by an irrational angle, composed with nearest point projection back into $\mathscr{C}(\mathbb{Z}^2)$.
If $H$ is any infinite cyclic subgroup of $\mathbb{Z}^2$, then $H$ is a finite Hausdorff distance from a line in $\mathbb{R}^2$ with rational slope, hence $f(H)$ is a finite Hausdorff distance from a line with irrational slope.
Thus $f(H)$ is an infinite Hausdorff distance from any subgroup of $\mathbb{Z}^2$.

Both of these counterexamples can be generalized to ones, for instance, where $H$ is isomorphic to $\mathbb{Z}^n$ for any $n$.

Using the work of Dunwoody and Swenson \cite{DunSwen}, we show in Theorem \ref{splitting_thm} that we can choose the subgroup $H$ in Theorem \ref{subgroup_thm} to be such that $G$ splits over $H$, as an amalgamated free product or HNN extension.
This splitting will ``have three coends'' (meaning that $H$ has three coends in $G$), which allows us to draw some algebraic conclusions about the splitting in Theorem \ref{etilde_equals_2}.
Finally, combining Theorem \ref{splitting_thm} with Theorem \ref{main_thm}, we get that these types of splittings are invariant under quasi-isometries, in the following setting.
\\

\noindent {\bf Corollary \ref{splitting_cor}. }{\it
Let $G$ and $G'$ be finitely generated groups and let $f\co \mathscr{C}(G) \to \mathscr{C}(G')$ be a quasi-isometry.
Suppose that $H$ is a finitely generated subgroup of $G$ such that for any infinite index subgroup $K$ of $H$, $K$ has one coend in $G$.
Suppose also that $G$ admits a splitting over $H$ that has three coends.

If sufficiently large uniform neighborhoods of $f(H)$ in $\mathscr{C}(G')$ satisfy the noncrossing condition, then $G'$ contains a finitely generated subgroup $H'$ such that $H'$ is quasi-isometric to $H$, $d_{Haus}(H', f(H))<\infty,$ and $G'$ admits a splitting over $H'$ that has three coends.
}\\

The outline of this paper is as follows.
In Sections \ref{terminology_section} and \ref{ud_section}, we will give most of our definitions and some basic facts.
The deep, shallow, 3--separating and noncrossing conditions will be discussed in Section \ref{terminology_section}, and uniformly distorting maps and coarse isometries will be defined in Section \ref{ud_section}.

In Section \ref{subgroups_section}, we will prove that any subset of a Cayley graph that satisfies the four conditions mentioned is a finite Hausdorff distance from a subgroup.
In Section \ref{noncrossing_section}, we will give our partial converse to the Flat Torus Theorem.

In Section \ref{algebraic_invariants}, we will discuss some coarse isometry invariants, and see how they show that the coarse geometry of the subset from Section \ref{subgroups_section} has implications about the algebra of the nearby subgroup.

In Section \ref{final_section}, we will see how our subgroup detection theorem can be applied to show the quasi-isometry invariance of certain subgroups, and we will also show the invariance of the associated commensurizer subgroups, given an assumption that involves the noncrossing condition.
Finally, in Section \ref{splittings_section}, we will see that we get group splittings in the settings in which we are working.
\\

 \noindent {\bf Acknowledgments.}
 The author is grateful Peter Scott, for many valuable discussions and comments.
 The author also thanks Mario Bonk, Matt Brin, and Panos Papasoglu for several helpful conversations.
 In addition, the author gratefully acknowledges that this research was partially supported by the University of Strasbourg, and NSF grant DMS-0602191.

\section{Preliminaries}\label{terminology_section}

In this section, we will state our conventions, define some necessary terminology, and prove a few basic facts related to some of our nonstandard terms.


We shall assume throughout this paper that balls are open and neighborhoods closed, i.e. for any metric space $X$, 
$$B_r(p) = \{ x \in X : d(p,x)<r\}, \ \  N_r(Y) = \{ x \in X : d(x,Y) \leq r\}$$
 for all $p \in X, Y \subseteq X$ and $r \geq 0$.
 The $r$--neighborhood of $Y$ refers to $N_r(Y)$.
 
 \begin{defn}
 If $X$ is a metric space and $Y \subseteq X$, then we say that a neighborhood $N$ of $Y$ is a {\em uniform neighborhood} if $N = N_r(Y)$ for some $r \geq 0$.
\end{defn}

If $X$ and $Y$ are metric spaces, $\Lambda \geq 1$ and $C \geq 0$, then a map $f \co X \to Y$ is a $(\Lambda, C)$ quasi-isometric embedding if, for all $x,x' \in X$,
$$\frac{1}{\Lambda}d(x,x')-C \leq d(f(x), f(x')) \leq \Lambda d(x,x') +C.$$

\begin{defn}
If $f \co X \to Y$ is a map between metric spaces and $C \geq 0$, then we say that $f$ is {\em $C$--onto} if the $C$--neighborhood of $Im(f)$ in $Y$ is equal to $Y$.
We say that $f$ is {\em coarsely onto} if $f$ is $C$--onto for some $C$.
\end{defn}

We say that $f$ is a $(\Lambda, C)$ quasi-isometry if $f$ is a $(\Lambda, C)$ quasi-isometric embedding and is $C$--onto.
In this case, we call $\Lambda$ and $C$ the parameters of $f$.
We will say that $f$ is a quasi-isometric embedding (quasi-isometry respectively) if $f$ is a $(\Lambda, C)$ quasi-isometric embedding ($(\Lambda , C)$ quasi-isometry respectively) for some $\Lambda \geq 1$ and $C \geq 0$.

We say that a function $f_1 \co X \to Y$ has finite distance from a function $f_2 \co X \to Y$ if
$$d(f_1, f_2) := \sup_{x \in X} d_Y(f_1(x), f_2(x)) < \infty.$$
If $f \co X \to Y$ is a quasi-isometry, and $f' \co Y \to X$ is also a quasi-isometry such that both compositions $f'f$ and $ff'$ are a finite distance from the identity maps $id_X$ and $id_Y$ respectively, then we say that $f'$ is a quasi inverse to $f$.
It is a fact that any quasi-isometry $f$ has a quasi inverse $f'$, such that the parameters of $f'$ depend only on the parameters of $f$.

We will take all graphs to be metric graphs, with each edge of length one.

\begin{conv}
We assume that every finitely generated group mentioned in the following comes equipped with a fixed finite generating set.
\end{conv}

If $G$ is a finitely generated group, we denote by $\mathscr{C}(G)$ the associated Cayley graph.
Thus $\mathscr{C}(G)$ has vertex set equal to $G$, and $g,g' \in G$ span an edge if and only if $g' = gs$, where $s$ or $s^{-1}$ is an element of the generating set associated to $G$.
We use $d_G$ to denote metric on $\mathscr{C}(G)$, which restricts to the word length metric on $G$.
Note that $G$ acts on $\mathscr{C}(G)$ by isometries on the left.

We recall that any two finite generating sets for $G$ will yield quasi-isometric Cayley graphs, so the geometry of $G$ is uniquely determined up to quasi-isometry.
\\


Next, we will introduce many of the nonstandard terms that will be needed later.  
(We note that this terminology differs from that of \cite{Vav_qlines}:  `deep', `shallow' and `$n$--separating' replace `essential', `inessential' and `$n$--parting' respectively, and $deep(m_0)$ is a stronger condition than $ess(m_0)$.  

\begin{defn}
Let $X$ be a metric space and let $Y$ and $Z$ be subsets of $X$.  A component of $(X-Z)$ is {\em shallow} if it is contained in some uniform neighborhood of $Z$.
Otherwise, we say that the component is {\em deep}.

If $m_1 \co \mathbb{R}_{\geq 0} \to \mathbb{R}_{\geq 0}$ is such that for each $r\geq 0$, each shallow component of $(X-N_r(Y))$ is contained in the $m_1(r)$--neighborhood of $N_r(Y)$, then we say that $Y$ satisfies $shallow(m_1)$.

If $m_0 \co \mathbb{R}_{\geq 0} \to \mathbb{R}_{\geq 0}$ is such that for each $r \geq 0$ and each point $p \in N_r(Y)$, the ball $B_{m_0(r)}(p)$ meets each deep component of $(X-N_r(Y))$, then we say that $Y$ satisfies $deep(m_0)$.
Note that $m_0(r) \geq r$ for all $r$.

We say that $Y$ satisfies the {\em shallow} ({\em deep} respectively) {\em condition} if $Y$ satisfies $shallow(m_1)$ ($deep(m_0)$ respectively) for some $m_1$ ($m_0$ respectively).
\end{defn}

\begin{defn}
Let $X$ and $Y$ be as above, let $n >0$, and we shall say that $Y$ is {\em $n$--separating}, or satisfies the {\em $n$--separating condition}, if $(X-Y)$ has at least $n$ deep components.
\end{defn}

\begin{rmk}
We will be primarily interested in subspaces $Y$ when $X$ is the Cayley graph of a finitely generated group, say $\mathscr{C}(G)$.
Note that, as $G$ acts on $\mathscr{C}(G)$ by isometries, if $Y$ satisfies $deep(m_0)$, $shallow(m_1)$, or the $n$--separating condition, then so does any translate $gY$ of $Y$.
\end{rmk}

\begin{defn}
A set of subsets of a metric space $X$, $\mathscr{Y} = \{ Y_1, Y_2, \ldots \}$, is said to satisfy $noncrossing(k)$ if, for each $i \neq j$, $Y_i$ is contained in the $k$--neighborhood of some 
deep
component of $X-Y_j$.

Suppose a group $G$ acts on $X$, let $Y \subseteq X$ and let $k>0$.
We say that $Y$ satisfies $noncrossing(k)$ if $\{ gY\}_{g \in G}$ satisfies $noncrossing(k)$ in the previous sense.

We say that $Y$ ($\mathscr{Y}$ respectively) satisfies the {\em noncrossing condition} if $Y$ ($\mathscr{Y}$ respectively) satisfies $noncrossing(k)$ for some $k$.
\end{defn}

We will see in Section \ref{final_section} that there are interesting situations when subsets of Cayley graphs naturally satisfy many of these conditions.
\\

As we will show next, the deep, shallow and $n$--separating conditions are invariant under quasi-isometries, in a suitable sense.  

The proof of the next lemma follows an analogous argument in \cite{Vav_qlines}.

\begin{lem}\label{shallow_qi_inv}
Let $f \co X \to X'$ be a quasi-isometry between geodesic spaces, with $Y \subseteq X$ and $Y' \subseteq X'$ such that $d_{Haus}(Y', f(Y))<\infty$.

If $Y$ satisfies the shallow condition then $Y'$ also satisfies the shallow condition.
\end{lem}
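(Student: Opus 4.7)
The plan is to take a shallow component $U'$ of $X'-N_{r'}(Y')$ and transfer the analysis to $X$ via a quasi-inverse $f' \co X' \to X$ of $f$. Let $(\Lambda,C)$ denote the parameters of $f$, let $(\Lambda',C')$ denote those of $f'$, let $D = d_{Haus}(Y',f(Y))$, and let $E = \sup_{x' \in X'} d(ff'(x'),x')$, all finite. For each $r' \geq 0$ I will produce $m_1'(r')$ bounding $\sup_{u \in U'} d(u, N_{r'}(Y'))$ uniformly over all shallow components $U'$. The key is to choose the ``corresponding'' scale $r$ in $X$ so that $f$-images of paths outside $N_r(Y)$ remain outside $N_{r'}(Y')$ even after inserting geodesic interpolation, which lets us move between the two spaces while preserving the distinction between the complements of $N_r(Y)$ and $N_{r'}(Y')$.

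Concretely, set $r := \Lambda(r'+2C+D+\Lambda)+1$, which is engineered so that whenever $x \in X$ satisfies $d(x,Y)>r$ and $z \in X'$ satisfies $d(z,f(x)) \leq \Lambda+C$, we still have $d(z,Y')>r'$; this follows from the standard lower estimate $d(f(x),Y') \geq d(x,Y)/\Lambda - C - D$. For any $p \in U'$, consider $f'(p) \in X$. If $f'(p) \in N_r(Y)$, then pushing back through $f$ gives $d(p,Y') \leq E + \Lambda r + C + D$. Otherwise $f'(p)$ lies in some component $V$ of the open set $X - N_r(Y)$. If $V$ is shallow, the hypothesis $shallow(m_1)$ for $Y$ gives $d(f'(p),Y) \leq m_1(r)+r$, and pushing back through $f$ again yields a bound on $d(p,Y')$ of the same flavor.

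The main obstacle is ruling out the case where $V$ is deep. Here I proceed by contradiction, assuming $V$ is deep and $d(p,Y') > r' + E$. Since $U'$ is shallow, $R' := \sup_{u \in U'} d(u,Y')$ is finite; since $V$ is unbounded from $Y$, I may pick $w \in V$ with $d(w,Y) > \Lambda(R'+C+D)$. Because $X$ is geodesic and $X - N_r(Y)$ is open (hence locally path-connected), $V$ is path-connected, so there is a path $\tau \subseteq V$ from $f'(p)$ to $w$. Sampling $\tau$ at unit spacing and connecting consecutive $f$-images by geodesics in $X'$ of length at most $\Lambda+C$ produces a continuous path $\alpha$ from $f(f'(p))$ to $f(w)$; by the choice of $r$, $\alpha \subseteq X' - N_{r'}(Y')$. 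Concatenating $\alpha$ with a geodesic of length at most $E$ from $p$ to $f(f'(p))$, which stays outside $N_{r'}(Y')$ by the assumption $d(p,Y') > r' + E$, yields a path in $X' - N_{r'}(Y')$ from $p$ to $f(w)$, forcing $f(w) \in U'$. But $d(f(w),Y') \geq d(w,Y)/\Lambda - C - D > R'$ contradicts $U' \subseteq N_{R'}(Y')$. Combining all cases gives a uniform bound on $d(p,Y')$ depending only on $r'$, $m_1$, and the fixed parameters, establishing $shallow(m_1')$ for $Y'$.
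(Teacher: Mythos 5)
Your proof is correct, and follows the same broad strategy as the paper's: transfer the problem to $X$ via a quasi-inverse $f'$, invoke the shallow condition for $Y$ there, and transfer back through $f$. Where you genuinely differ is in the mechanism for excluding the problematic case that $f'(p)$ lands in a \emph{deep} component $V$ of $X-N_r(Y)$. The paper first proves a component-level injectivity statement --- choosing $R$ (via an $R' > \Lambda'K'$ separation estimate) so that no component of $X-N_R(Y)$ meets the $f'$-images of two distinct components of $X'-Y''$ --- and then uses coarse-ontoness of $f'$ to conclude that deep components of $X-N_R(Y)$ can only receive $f'$-images of deep components of $X'-Y''$. You instead argue pointwise: sample a path in $V$ at unit spacing, interpolate the $f$-images with geodesics of length $\leq \Lambda + C$ to get a path in $X'-N_{r'}(Y')$ (your choice of $r$ is calibrated exactly for this), and concatenate with the short geodesic from $p$ to $ff'(p)$ to force $f(w)\in U'$ for a $w\in V$ chosen far from $Y$, contradicting the finiteness of $\sup_{u\in U'}d(u,Y')$. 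Both routes are sound; the small circularity in using $R'$ both as a target and as an input to the contradiction is harmless since $R'$ is genuinely finite. The paper's version has the advantage that the component-level correspondence it builds is reused verbatim in the proofs of Lemmas~\ref{n--sep_qi_inv} and~\ref{deep_qi_inv}, whereas your version is more self-contained at the cost of the explicit chain construction (which, incidentally, is where the geodesic hypothesis on both $X$ and $X'$ earns its keep: path-connectedness of components of $X-N_r(Y)$, and geodesic interpolation in $X'$).
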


\begin{proof}
Suppose that $Y$ satisfies $shallow(m_1)$, let $Y''$ denote $N_r(Y')$ for some $r \geq 0$, and we shall show that there is some constant $m_1'(r)$ such that any shallow component of the complement of $Y''$ is contained in the $m_1'(r)$--neighborhood of $Y''$.

Let $f'$ denote a quasi-inverse to $f$.
We claim that there is some $R>0$ that is large enough so that if $C, C'$ are distinct components of $(X'-Y'')$, then no component of $(X-N_R(Y))$ meets both $f'(C)$ and $f'(C')$.
To see this, let $\Lambda', K'$ be such that $f'$ is a $(\Lambda', K')$ quasi-isometry, let $R' > \Lambda'K'$ and let $R\geq 0$ be such that $f'(N_{R'}(Y'')) \subseteq N_R(Y)$.  
Note that any points in distinct components of the complement of $Y''$ that are also in the complement of $N_{R'}(Y'')$ are a distance of more than $2R'$, and hence more than $2\Lambda' K'$, from one another.

Since $f'$ is $K'$--onto and  $f'(N_{R'}(Y'')) \subseteq N_R(Y)$, if some component of $(X-N_R(Y))$ met the image under $f'$ of more than one component of $(X'-Y'')$, then there must be distinct components $C,C'$ of the complement of $Y''$ and points $p \in C, p' \in C'$ such that $d(f'(p), f'(p')) \leq K'$.  
But $(1/\Lambda')d(p,p')-K' \leq d(f'(p), f'(p'))$, since $f'$ is a $(\Lambda', K')$ quasi-isometry, and $2\Lambda'K' < d(p,p')$, by the previous paragraph.
Combining these equations yields $K'<d(f'(p), f'(p'))$, a contradiction. 
Thus each component of the complement of $N_R(Y)$ is met by the image under $f'$ of no more than one component of the complement of $Y''$.

Since $f'$ is coarsely onto, it follows that any deep component of the complement of $N_R(Y)$ is met only by the image under $f'$ of a deep component of the complement of $Y''$.
Hence, for any shallow component $S$ of the complement of $Y''$, $f'(S)$ is contained in the union of $N_R(Y)$ with shallow components of the complement of $N_R(Y)$, and hence is contained in the $m_1(R)$--neighborhood of $N_R(Y)$.
It follows that there is a constant $m_1'(r)$, depending only on $f$, $f'$, $R$, $m_1(R)$ and $d_{Haus}(Y'', f(Y))$, such that $S$ is contained in the $m_1'(r)$-neighborhood of $Y''$.
\end{proof}

With slight alteration, and in light of Lemma \ref{shallow_qi_inv}, \cite{Vav_qlines} shows that $n$--separation is a quasi-isometry invariant:

\begin{lem}\label{n--sep_qi_inv}
Let $f \co X \to X'$ be a quasi-isometry between geodesic spaces, with $Y \subseteq X$ and $Y' \subseteq X'$ such that $Y$ and $Y'$ both satisfy the shallow condition and $d_{Haus}(f(Y), Y')<\infty$.

Then there is some $R \geq 0$ such that for any $n>0$, if $Y$ is $n$--separating then $N_{R}(Y')$ is $n$--separating.
\end{lem}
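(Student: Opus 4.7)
The plan is to adapt the argument of Lemma \ref{shallow_qi_inv}, with $f$ and $f'$ swapped and deep components tracked instead of shallow ones. Let $D_1,\ldots,D_n$ be the deep components of $(X-Y)$ guaranteed by the $n$-separating hypothesis; the goal is to produce $n$ deep components of $(X'-N_R(Y'))$ for some $R$ that does not depend on $n$.

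First I pass from $(X-Y)$ to $(X-N_{R_0}(Y))$ for a fixed $R_0\geq 0$ and observe that each $D_i$ contains a deep component $D_i'$ of $(X-N_{R_0}(Y))$. Indeed, since $D_i$ is deep it contains a point $p_i$ at distance more than $m_1(R_0)$ from $N_{R_0}(Y)$, where $m_1$ is the function supplied by $shallow(m_1)$ for $Y$; the component of $(X-N_{R_0}(Y))$ containing $p_i$ lies in $D_i$ (since components of $(X-N_{R_0}(Y))$ refine those of $(X-Y)$) and cannot be shallow, so it is the required $D_i'$. Distinct $D_i$'s yield distinct $D_i'$'s.

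Next I apply the argument of Lemma \ref{shallow_qi_inv} with $f$ replacing $f'$ and $N_{R_0}(Y)$ playing the role of $Y''$: this produces some $R\geq 0$---depending only on the quasi-isometry constants of $f$, on $d_{Haus}(f(Y),Y')$, on $R_0$, and on the shallow function $m_1'$ for $Y'$, and \emph{not} on $n$---with the property that no single component of $(X'-N_R(Y'))$ is met by the $f$-images of two distinct components of $(X-N_{R_0}(Y))$. Verifying that this constant $R$ can be chosen independently of $n$ is the main (and essentially only) obstacle, but it is immediate from inspecting the proof of Lemma \ref{shallow_qi_inv}, whose internal estimates never involve the number of components.

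To conclude, for each $i$ I choose $q_i\in D_i'$ with $d(q_i,Y)$ so large that $d(f(q_i),Y')>m_1'(R)+R$; this is possible since $D_i$ is deep and $f$ is a quasi-isometry with $d_{Haus}(f(Y),Y')<\infty$. Let $E_i$ denote the component of $(X'-N_R(Y'))$ containing $f(q_i)$. Then $E_i$ is deep, by $shallow(m_1')$ applied to $Y'$ (since $f(q_i)$ is farther than $m_1'(R)$ from $N_R(Y')$, so $E_i$ cannot be contained in the $m_1'(R)$-neighborhood of $N_R(Y')$), and the $E_i$'s are pairwise distinct by the noncrossing statement just established. Thus $(X'-N_R(Y'))$ has at least $n$ deep components, as required.
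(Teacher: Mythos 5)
Your proof is correct, and it follows precisely the route the paper indicates: adapt the component-separation argument from Lemma \ref{shallow_qi_inv} with the roles of $f$ and its quasi-inverse swapped, track deep rather than shallow components, and use $shallow(m_1')$ for $Y'$ to verify depth of the resulting components of $(X'-N_R(Y'))$. The paper gives no explicit proof of its own --- it defers to \cite{Vav_qlines} ``with slight alteration, and in light of Lemma \ref{shallow_qi_inv}'' --- so your reconstruction matches the intended strategy and there is nothing further to compare.
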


Our next result, which is about quasi-isometries and the deep condition, also requires that the shallow condition is satisfied.

\begin{lem}\label{deep_qi_inv}
Let $f \co X \to X'$ be a quasi-isometry between geodesic spaces, with $Y \subseteq X$ and $Y' \subseteq X'$ such that both $Y$ and $Y'$ satisfy the shallow condition and $d_{Haus}(f(Y), Y')<\infty$.

If $Y$ satisfies the deep condition, then $Y'$ also satisfies the deep condition.
\end{lem}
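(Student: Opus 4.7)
The plan is to prove that $Y'$ satisfies the deep condition by transporting each deep component $C'$ of $X' - N_{r'}(Y')$ back to a deep component $C$ of $X - N_r(Y)$ via a quasi-inverse $f'$ of $f$, applying $deep(m_0)$ for $Y$ in $X$, and pushing the resulting point forward to $X'$ via $f$. Fix $r' \geq 0$; the goal is to produce $m_0'(r')$ such that $B_{m_0'(r')}(p')$ meets every deep component of $X' - N_{r'}(Y')$, for every $p' \in N_{r'}(Y')$. Using the parameters $\Lambda, C_f$ of $f$, along with $D = d_{Haus}(f(Y), Y')$ and $E = d(ff', \mathrm{id}_{X'})$, I would choose $r$ large enough that every $x \in X$ with $d(x, Y) > r$ satisfies $d(f(x), Y') > r' + \Lambda + C_f$. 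This is essentially the same size condition used in the proof of Lemma \ref{shallow_qi_inv}, so that lemma's correspondence between deep components via $f'$ is available at this scale.

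Given $p' \in N_{r'}(Y')$, first pick $p \in Y$ with $d(f(p), p') \leq r' + D$ (triangle inequality through a point of $Y'$ closest to $p'$). Now fix a deep component $C'$ of $X' - N_{r'}(Y')$. Since $C'$ is deep, select $q' \in C'$ with $d(q', Y')$ very large (larger than a quantity determined by $r$, $E$, and the parameters of $f'$); then $f'(q')$ lies outside $N_r(Y)$, hence in some deep component $C$ of $X - N_r(Y)$, and the geodesic in $X'$ from $q'$ to $ff'(q')$, of length at most $E$, avoids $N_{r'}(Y')$, forcing $ff'(q') \in C'$. Because $p \in Y$, $deep(m_0)$ supplies a point $q \in B_{m_0(r)}(p) \cap C$, and the candidate $f(q)$ satisfies $d(f(q), p') \leq \Lambda m_0(r) + C_f + r' + D$, which will be our $m_0'(r')$, a quantity depending only on $r'$.

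The main obstacle, and the only step beyond routine bookkeeping, is verifying that $f(q)$ actually lies in $C'$ rather than in some other deep component or inside $N_{r'}(Y')$. For this, observe that $q$ and $f'(q')$ both lie in $C$, which, as an open connected subset of a geodesic space, is path-connected; take a path $\gamma \subseteq C$ joining them and discretize it as $q = x_0, x_1, \ldots, x_n = f'(q')$ with $d(x_i, x_{i+1}) \leq 1$. By the choice of $r$, each $f(x_i)$ sits in $X' - N_{r'}(Y')$ at distance more than $r' + \Lambda + C_f$ from $Y'$, and consecutive images satisfy $d(f(x_i), f(x_{i+1})) \leq \Lambda + C_f$, so the geodesic segments $[f(x_i), f(x_{i+1})]$ themselves also avoid $N_{r'}(Y')$. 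Hence all the $f(x_i)$ lie in a single component of $X' - N_{r'}(Y')$, which must be $C'$ since $f(x_n) = ff'(q') \in C'$; in particular $f(q) = f(x_0) \in C'$, completing the proof.
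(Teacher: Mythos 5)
Your overall strategy---pull a deep component of the complement of $N_{r'}(Y')$ back along $f'$ to a deep component in $X$, apply $deep(m_0)$ there, and push forward by $f$---is the same as the paper's, but the crucial verification step is carried out by a genuinely different mechanism. The paper enlarges $R$ by $K'$ and uses the $K'$--surjectivity of $f'$ to arrange $D \subseteq N_{K'}(f'(D'))$, so that $B_{m_0(R)+K'}(f'(p'))$ meets $f'(D')$ and one immediately gets a point $d' \in D'$ with $f'(d')$ near $f'(p')$; pushing forward by $f$ then bounds $d(p',d')$ with no further argument. You instead apply the deep condition at a point $p \in Y$ chosen so that $f(p)$ is near $p'$, obtain $q \in B_{m_0(r)}(p) \cap C$, and then must separately verify that $f(q) \in C'$. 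Your path-discretization argument for this is correct: components of the open set $X - N_r(Y)$ in a geodesic space are path-connected, the discretized chain has small consecutive gaps, the images stay at distance $> r' + \Lambda + C_f$ from $Y'$ by the choice of $r$, and the geodesic segments between consecutive images therefore avoid $N_{r'}(Y')$, so all images are in the component of $ff'(q')$, which is $C'$. This is a more hands-on argument than the paper's; the paper's use of coarse surjectivity sidesteps the need for any connectivity argument at all.

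There is one real gap, which you have treated as routine when it is not. You write ``then $f'(q')$ lies outside $N_r(Y)$, hence in some deep component $C$ of $X - N_r(Y)$,'' but lying outside $N_r(Y)$ only places $f'(q')$ in some component, not a deep one, and $deep(m_0)$ gives you nothing about shallow components. To conclude $C$ is deep you must invoke the shallow condition on $Y$: if $C$ were shallow, then $C \subseteq N_{m_1(r)}(N_r(Y))$, so $d(f'(q'),Y) \leq r + m_1(r)$, which is contradicted once $d(q',Y')$ is taken large enough relative to $m_1(r)$ as well as the other constants. Your stated ``quantity determined by $r$, $E$, and the parameters of $f'$'' omits $m_1(r)$, and you never mention the shallow hypothesis, even though it is exactly what is needed here---this is precisely the point in the paper's proof where it says ``As $Y$ satisfies the shallow condition, $f'(D')$ must meet a deep component.'' The gap is easy to fill, but as written the argument does not establish that $deep(m_0)$ is applicable to $C$.
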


\begin{proof}
Suppose that $Y$ satisfies $deep(m_0)$, and let $f'$ be a quasi-inverse to $f$.
Suppose that $f$ is a $(\Lambda, K)$ quasi-isometry and that $f'$ is a $(\Lambda', K')$ quasi-isometry.
Fix $r \geq 0$, let $Y'' = N_r(Y')$, and we will show that there is some constant $m_0'(r)$ such that $B_{m_0'(r)}(p')$ meets all deep components of the complement of $Y''$, for any $p' \in Y''$.

As we saw in the proof of Lemma \ref{shallow_qi_inv}, there is some $R \geq 0$ such that $f'(Y'') \subseteq N_R(Y)$, and each component of the complement of $N_R(Y)$ meets the image under $f'$ of at most one component of $(X'-Y'')$.
Let $D'$ be a deep component of $(X'-Y'')$.
As $Y$ satisfies the shallow condition, $f'(D')$ must meet a deep component of $(X-N_R(Y))$, say $D$.
Since $f'$ is $K'$--onto, by enlarging $R$ by $K'$ if necessary, we can assume that $D \subseteq N_{K'}(f'(D'))$.

Thus $B_{m_0(R)+K'}(p)$ meets $f'(D')$ for any $p \in N_R(Y)$.
In particular, fix any $p' \i nY''$ and take $p = f'(p')$, and there is some $d' \in D'$ such that $d(f'(p'), f'(d'))<(m_0(R)+K')$, and hence $d(ff'(p'), ff'(d'))< \Lambda (m_0(R)+K')+K$.
The distance from $ff'$ to the identity on $X'$ is bounded by a function of $\Lambda $ and $K$ and hence 
$$d(p', D') \leq d(p', d') $$
$$\leq d(p', ff'(p')) + d(ff'(p'), ff'(d')) + d(ff'(d'), d')$$
$$\leq (\Lambda (m_0(R)+K')+K) + 2d(ff', id_{X'}).$$
Thus our claim follows, for any $m_0'(r) > \Lambda (m_0(R)+K')+K + 2d(ff', id_{X'})$.
It follows that $Y'$ satisfies the deep condition.
\end{proof}

\section{Uniformly distorting maps and coarse isometries}\label{ud_section}

Next, we will introduce uniformly distorting maps and coarse isometries, and make some geometric observations about subgroups.
Coarse isometries will provide a useful generalization of quasi-isometries, and we will see how both of these types of functions arise naturally when considering subgroups in Cayley graphs.
Moreover, we will see in Proposition \ref{ci_is_qi} that in many of the situations that we are concerned with, coarse isometries are in fact quasi-isometries.

\begin{defn}\label{ud_defn}
Let $(X,d_X)$ and $(Y,d_Y)$ be metric spaces, and let $\phi$ and $\Phi$ be weakly increasing proper functions from $\mathbb{R}_{\geq 0}$ to $\mathbb{R}_{\geq 0}$.
Then we shall say that a function $f \co X \to Y$ is a {\em $(\phi, \Phi)$--uniformly distorting map} if, for any $x, x' \in X$ and $r \in \mathbb{R}_{\geq 0}$,
\begin{enumerate}
\item{if $d_X(x,x') \geq r$ then $d_Y(f(x), f(x')) \geq \phi (r)$, and}
\item{if $d_X(x,x') \leq r$ then $d_Y(f(x), f(x')) \leq \Phi(r)$.}
\end{enumerate}
Thus $\phi (d_X(x,x')) \leq d_Y(f(x), f(x')) \leq \Phi ( d_X(x,x'))$.

We shall say that $f$ is a {\em uniformly distorting map} if $f$ is $(\phi, \Phi)$--uniformly distorting for some $\phi$ and $\Phi$.
\end{defn}

Note that we do not require a uniformly distorting map to be continuous.
Note also that the composition of uniformly distorting maps is uniformly distorting.

\begin{defn}
If $f$ is both uniformly distorting and coarsely onto, then we say that $f$ is a {\em coarse isometry}.
\end{defn}

Note that any quasi-isometry is a coarse isometry.
In particular, if $Y_1, Y_2 \subseteq X$ are such that $d_{Haus}(Y_1, Y_2)<\infty$, then a nearest point projection map of $Y_1$ onto $Y_2$ is a coarse isometry.

In analogy to quasi inverses, we have the following.

\begin{lem}\label{coarse_inverses}{\em {\bf \cite{Vav_qlines}}}
If $f \co X \to Y$ is a coarse isometry between metric spaces, then there is a coarse isometry $f' \co Y \to X$ such that $f'f$ and $ff'$ have finite distances from the identity functions $id_X$ and $id_Y$ respectively.
\end{lem}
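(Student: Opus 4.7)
The plan is to mimic the standard quasi-inverse construction for quasi-isometries. For each $y \in Y$, the $C$-ontoness of $f$ provides some $x_y \in X$ with $d_Y(f(x_y),y) \leq C$; I would set $f'(y) := x_y$. By construction $d_Y(ff'(y),y) \leq C$, so $ff'$ is within distance $C$ of $\mathrm{id}_Y$. Applying this with $y = f(x)$ gives $d_Y(f(f'f(x)), f(x)) \leq C$; then, since $\phi$ is weakly increasing and proper, any $R$ with $\phi(R) > C$ forces $d_X(f'f(x), x) \leq R$ via the contrapositive of the lower bound in the uniformly distorting condition. Hence $f'f$ is also boundedly close to $\mathrm{id}_X$.

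Next I would verify that $f'$ is uniformly distorting. Writing $x = f'(y)$ and $x' = f'(y')$, the triangle inequality combined with $d_Y(f(x),y), d_Y(f(x'),y') \leq C$ gives $| d_Y(f(x),f(x')) - d_Y(y,y') | \leq 2C$. To translate this into two-sided bounds on $d_X(x,x')$, I would introduce pseudo-inverses of $\phi$ and $\Phi$, defined by $\Phi'(r) := \sup\{ s \geq 0 : \phi(s) \leq r + 2C\}$ and $\phi'(r) := \sup\{ s \geq 0 : \Phi(s) \leq r - 2C\}$ (with $\phi'(r) := 0$ when $r \leq 2C$). Properness and monotonicity of $\phi$ and $\Phi$ should ensure that $\Phi'$ and $\phi'$ are themselves weakly increasing proper functions from $\mathbb{R}_{\geq 0}$ to $\mathbb{R}_{\geq 0}$, and the inequalities $\phi(d_X(x,x')) \leq d_Y(y,y') + 2C$ and $\Phi(d_X(x,x')) \geq d_Y(y,y') - 2C$ then yield the desired $(\phi', \Phi')$-uniformly distorting estimate for $f'$.

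Coarse ontoness of $f'$ is immediate from the bound $d_X(f'f(x),x) \leq R$, which shows that $\mathrm{Im}(f')$ is $R$-dense in $X$. Combined with the previous paragraph, this shows that $f'$ is a coarse isometry with the required near-inverse property.

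The main obstacle I anticipate is the careful construction of the pseudo-inverses $\phi'$ and $\Phi'$: since $\phi$ and $\Phi$ are only assumed weakly increasing and proper, and need not be continuous or strictly monotone, one cannot simply invert them. Some attention is needed to set up the sup-based surrogates so that they simultaneously inherit monotonicity, properness, and the correct one-sided comparison inequalities; once this is done, the rest of the argument is a routine translation of bounds across $f$ and back through $f'$.
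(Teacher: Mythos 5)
Your proposal is the standard quasi-inverse construction, and the overall architecture is sound: define $f'(y)$ by picking a $C$-approximate preimage, verify $ff'$ is $C$-close to $id_Y$, pull that back through the $\phi$-lower bound to get $f'f$ close to $id_X$, check uniform distortion of $f'$ by transferring the two-sided estimate on $f$ across the $\pm 2C$ slack, and note coarse ontoness of $f'$ from the $f'f$ estimate. Those steps are all correct, and the definition $\Phi'(r) = \sup\{s : \phi(s) \leq r + 2C\}$ does the job for the upper bound: if $\phi(d_X(x,x')) \leq r+2C$ then $d_X(x,x')$ lies in the set whose supremum defines $\Phi'(r)$, giving $d_X(x,x') \leq \Phi'(r)$, and properness of $\phi$ makes $\Phi'(r)$ finite.

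The lower-bound surrogate as you wrote it, however, does not work. From $d_Y(y,y') \geq r$ you obtain $\Phi(d_X(x,x')) \geq r - 2C$, and you need this to force $d_X(x,x') \geq \phi'(r)$. With $\phi'(r) := \sup\{s : \Phi(s) \leq r - 2C\}$ this can fail whenever $\Phi$ is locally constant: if $\Phi(s) = r-2C$ for all $s \in [0,1]$ (extended properly beyond $1$), then $\phi'(r) \geq 1$, but $d_X(x,x') = 0$ already satisfies $\Phi(0) = r-2C \geq r-2C$, so the required inequality $d_X(x,x') \geq \phi'(r)$ is false. Since $\Phi$ is only weakly increasing, such plateaus are allowed. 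The fix is to change the non-strict inequality to a strict one, $\phi'(r) := \sup\{s \geq 0 : \Phi(s) < r - 2C\}$ (still setting $\phi'(r) = 0$ for $r \leq 2C$), or equivalently $\phi'(r) := \inf\{t \geq 0 : \Phi(t) \geq r - 2C\}$; these two definitions agree for weakly increasing $\Phi$. With this choice, any $t$ satisfying $\Phi(t) \geq r-2C$ is an upper bound for $\{s : \Phi(s) < r-2C\}$, hence $t \geq \phi'(r)$, and monotonicity and properness of $\phi'$ follow just as you argue for $\Phi'$. This is precisely the kind of boundary-case care you anticipated; with the strict inequality the argument closes.
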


\begin{defn}
We shall call any function $f'$ satisfying the conclusion of the above lemma a {\em coarse inverse} to $f$.
\end{defn}

Also we note the following observation.

\begin{lem}
Suppose that $f \co X \to Y, g \co Y \to Z$ are coarse isometries.  Then $g f \co X \to Z$ is also a coarse isometry.
\end{lem}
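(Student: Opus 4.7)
The plan is to check the two defining properties of a coarse isometry --- being uniformly distorting and being coarsely onto --- separately for the composition $gf$, using the corresponding properties of $f$ and $g$.

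First I would handle uniform distortion, which is essentially the remark already made in the text following Definition \ref{ud_defn}. Suppose $f$ is $(\phi_f, \Phi_f)$--uniformly distorting and $g$ is $(\phi_g, \Phi_g)$--uniformly distorting. Given $x, x' \in X$ and $r \geq 0$ with $d_X(x,x') \geq r$, the definition of uniform distortion for $f$ gives $d_Y(f(x), f(x')) \geq \phi_f(r)$, and then since $\phi_g$ is weakly increasing, uniform distortion for $g$ yields $d_Z(gf(x), gf(x')) \geq \phi_g(\phi_f(r))$. The analogous computation with $\Phi_f, \Phi_g$ handles the upper inequality, so $gf$ is $(\phi_g \circ \phi_f, \Phi_g \circ \Phi_f)$--uniformly distorting. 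The composition of two weakly increasing proper functions from $\mathbb{R}_{\geq 0}$ to $\mathbb{R}_{\geq 0}$ is again weakly increasing and proper, so these are admissible distortion functions.

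Second I would verify that $gf$ is coarsely onto. Suppose $f$ is $C_f$--onto and $g$ is $C_g$--onto. Given any $z \in Z$, there exists $y \in Y$ with $d_Z(g(y), z) \leq C_g$. Since $f$ is $C_f$--onto, there exists $x \in X$ with $d_Y(f(x), y) \leq C_f$, and then uniform distortion for $g$ gives $d_Z(gf(x), g(y)) \leq \Phi_g(C_f)$. The triangle inequality yields
$$d_Z(gf(x), z) \leq \Phi_g(C_f) + C_g,$$
so $gf$ is $(\Phi_g(C_f) + C_g)$--onto.

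Combining the two parts, $gf$ is uniformly distorting and coarsely onto, hence a coarse isometry. There is no real obstacle here: the only subtlety worth flagging is that one must invoke the distortion bound $\Phi_g$ to transfer the coarse onto constant of $f$ across the map $g$, since without a Lipschitz-type control the image of a $C_f$--ball under $g$ could a priori be unbounded.
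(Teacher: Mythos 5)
Your proof is correct. The paper treats this lemma as an immediate observation and gives no proof (it only notes in passing, right after Definition \ref{ud_defn}, that the composition of uniformly distorting maps is uniformly distorting), so you have simply supplied the standard argument that the paper leaves implicit; the subtlety you flag at the end, that one needs the upper distortion bound $\Phi_g$ to push the $C_f$--onto constant across $g$, is exactly the one nontrivial point.
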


In the next result, we make the key observation that often coarse isometries actually are quasi-isometries.

\begin{prop}\label{ci_is_qi}
Let $(X, d_X)$ and $(Y,d_Y)$ be geodesic metric spaces.
Then any coarse isometry between them is a quasi-isometry.
\end{prop}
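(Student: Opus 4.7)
The plan is to upgrade the possibly nonlinear control functions $\phi$ and $\Phi$ of a coarse isometry to linear ones, using the geodesic hypotheses on both $X$ and $Y$. The strategy has three steps: (i) use geodesics in $X$ to get a linear upper bound on how much $f$ can stretch; (ii) apply this same argument to a coarse inverse $f'$ to get a linear upper bound for $f'$; (iii) combine this bound on $f'$ with $d(f'f, id_X) < \infty$ to produce the lower linear bound for $f$.

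For step (i), let $f$ be a $(\phi, \Phi)$--coarse isometry and fix $x, x' \in X$. Since $X$ is geodesic, pick a geodesic from $x$ to $x'$ and subdivide it into $n = \lceil d_X(x,x') \rceil$ consecutive points $x = x_0, x_1, \ldots, x_n = x'$ with $d_X(x_i, x_{i+1}) \leq 1$. Applying the upper bound in Definition \ref{ud_defn} to each segment and summing by the triangle inequality yields
$$d_Y(f(x), f(x')) \leq n\,\Phi(1) \leq \Phi(1)\, d_X(x,x') + \Phi(1).$$
So $f$ satisfies a linear upper bound with parameters depending only on $\Phi(1)$.

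For steps (ii) and (iii), let $f' \co Y \to X$ be a coarse inverse to $f$, which exists and is itself a coarse isometry by Lemma \ref{coarse_inverses}. Applying step (i) to $f'$, which is legal since $Y$ is geodesic, there exist constants $\Lambda', C'$ with
$$d_X(f'(y), f'(y')) \leq \Lambda'\, d_Y(y, y') + C'$$
for all $y, y' \in Y$. Let $D = d(f'f, id_X) < \infty$. Then for any $x, x' \in X$,
$$d_X(x, x') \leq d_X(x, f'f(x)) + d_X(f'f(x), f'f(x')) + d_X(f'f(x'), x') \leq 2D + \Lambda'\, d_Y(f(x), f(x')) + C',$$
which rearranges to the desired lower linear bound
$$d_Y(f(x), f(x')) \geq \tfrac{1}{\Lambda'}\, d_X(x,x') - \tfrac{1}{\Lambda'}(2D + C').$$
Taking $\Lambda$ to be the maximum of $\Phi(1)$ and $\Lambda'$, and $C$ to be a sufficiently large constant absorbing $\Phi(1)$, $(2D+C')/\Lambda'$, and the coarse surjectivity constant, we conclude that $f$ is a $(\Lambda, C)$--quasi-isometric embedding that is coarsely onto, hence a quasi-isometry.

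The only nontrivial step is step (i), the geodesic subdivision argument; once the upper bound is linear, step (iii) is formal bookkeeping. The essential use of the hypotheses is that both $X$ and $Y$ must be geodesic, since step (i) is applied once in each space (first to $f$, then to $f'$). Without this, one cannot replace $\Phi$ by a linear function, and $\phi$ could in principle grow arbitrarily slowly.
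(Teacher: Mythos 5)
Your proof is correct and follows essentially the same route as the paper's: subdivide a geodesic to get a linear upper bound on $f$, apply that argument to the coarse inverse $f'$, and combine with $d(f'f, id_X)<\infty$ to get the linear lower bound. The only cosmetic difference is that you fix the subdivision scale at $1$ (giving constants in terms of $\Phi(1)$) while the paper keeps an arbitrary $\epsilon>0$ and gets $\epsilon'/\epsilon$ with $\epsilon'=\Phi(\epsilon)$; the substance is identical.
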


\begin{proof}
Let $f \co X \to Y$ be a $(\phi, \Phi)$--uniformly distorting map that is a coarse isometry, and let $f'$ be a coarse inverse to $f$.
We will argue that there is some $\Lambda_1 \geq 1, C_1\geq 0$ depending on $\phi$ and $\Phi$ such that, for all $p_1, p_2 \in X$,
$$d_Y(f(p_1), f(p_2)) \leq \Lambda_1 d_X(p_1, p_2) + C_1.$$

This will imply the proposition, for we can run this argument with $f'$ replacing $f$ to get constants $\Lambda_2, C_2$ such that for all $q_1, q_2 \in Y$,
$$d_X(f'(q_1), f'(q_2)) \leq \Lambda_2 d_Y(q_1, q_2)+C_2.$$
As $d(id_X, f'f)<\infty$, we have $d_X(p_1, p_2) \leq d_X(f'f(p_1), f'f(p_2)) + 2d(id_X, f'f)$, and combining this with the above equation, taking $f(p_i)$ for $q_i$, gives
$$d_X(p_1, p_2) \leq \Lambda_2 d_Y(f(p_1), f(p_2)) + (C_2 + 2d(id_X,f'f)).$$
Hence 
$$(1/\Lambda_2)d_X(p_1, p_2) - (1/\Lambda_2)(C_2 + 2d(id_X, f'f))$$
$$ \leq d_Y(f(p_1), f(p_2)) \leq \Lambda_1 d_X(p_1, p_2) + C_1,$$
so $f$ is a quasi-isometry.
\\

It remains to show that we can find $\Lambda_1\geq 1, C_1\geq 0$ so that, for all $p_1, p_2 \in X$,
$$d_Y(f(p_1), f(p_2)) \leq \Lambda_1 d_X(p_1, p_2) + C_1.$$
Fix $\epsilon>0$ and let $\epsilon' = \Phi(\epsilon)$.
Then for any $p_1, p_2 \in X$, there is a sequence of points $x_0 = p_1, x_2, x_3, \ldots , x_k$ along a geodesic from $p_1$ to $p_2$ such that $d_X(x_i, x_{i+1}) = \epsilon$ for all $i<k$, $d_X(x_k, p_2) \leq \epsilon$, and
$$\sum_{i=0}^{k-1}d_X(x_i, x_{i+1})=k\epsilon \leq d_X(p_1, p_2).$$
Hence
$$k \leq d_X(p_1, p_2)/\epsilon.$$

Note that $\{ f(x_i)\}$ is a sequence of points from $f(p_1)$ to $f(p_2)$, such that for each $i<k$, 
$d_Y(f(x_i), f(x_{i+1})) \leq \epsilon'$
and $d_Y(f(x_k), f(p_2)) \leq \epsilon'$.
Hence 
$$d_Y(f(p_1), f(p_2)) \leq \sum_{i=0}^{k-1}d_Y(f(x_i), f(x_{i+1})) + d_Y(f(x_k), f(p_2)) \leq \epsilon'(k+1)$$
$$ \leq \epsilon'[d_X(p_1, p_2)/\epsilon + 1] = \frac{\epsilon'}{\epsilon}d_X(p_1, p_2) + \epsilon'.$$ 
Thus our claim follows, for $\Lambda_1 =  \max \{ 1, \frac{\epsilon'}{\epsilon} \}$ and $C_1 = \epsilon'$.
\end{proof}

Let $H$ be a finitely generated subgroup of a finitely generated group $G$.
Then we can consider $H$ with respect to its own intrinsic geometry, $(H, d_H)$, or with respect to the geometric structure induced by $G$, $(H, d_G)$.
Our main interest in coarse isometries stems from the fact that these two spaces are coarsely isometric:

\begin{lem}\label{inclusion_is_wup} {\em {\bf \cite{Vav_qlines}}}
Let $G$ be a finitely generated group with finitely generated subgroup $H$.  Then the inclusion map $i_H \co (H, d_H) \to (H, d_G)$ is uniformly distorting, hence is a coarse isometry.

Moreover, the bound on expansion can be taken to be linear.
That is, $i_H$ is $(\phi, \Phi)$--uniformly distorting, for some $\phi$ and $\Phi$, where we can take $\Phi(r) \leq Lr$ for all $r$ and some constant $L >0$.
\end{lem}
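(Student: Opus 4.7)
The plan is to verify the two uniform-distortion conditions separately, using left-invariance of both metrics under $H$ to reduce to comparisons of the form $d_H(1,h)$ versus $d_G(1,h)$.

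For the upper bound (which will also give the claimed linear $\Phi$), I would fix finite generating sets $S_H$ of $H$ and $S_G$ of $G$, and let $L = \max_{s \in S_H} d_G(1,s)$, which is finite because $S_H$ is finite. Given $h, h' \in H$ with $d_H(h, h') = n$, write $h^{-1}h'$ as a product of $n$ elements of $S_H \cup S_H^{-1}$; by the triangle inequality in $d_G$ this gives
$$d_G(h,h') = d_G(1, h^{-1}h') \leq Ln = L\, d_H(h,h').$$
So $\Phi(r) = Lr$ works, which is linear as required.

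For the lower bound, I need a weakly increasing proper $\phi$ such that $d_H(h,h') \geq r$ implies $d_G(h,h') \geq \phi(r)$. The key observation is that balls in $\mathscr{C}(G)$ are finite because $G$ is finitely generated, so for each $R \geq 0$ the set $H \cap B_R(1) \subseteq G$ is finite, and in particular
$$N(R) := \max\{ d_H(1,h) : h \in H,\ d_G(1,h) \leq R \}$$
is finite. Equivalently, if $d_H(1,h) > N(R)$ then $d_G(1,h) > R$. By left-invariance, the same holds with $1$ replaced by any base point. I would then define
$$\phi(r) = \inf\{ d_G(1,h) : h \in H,\ d_H(1,h) \geq r\},$$
which is weakly increasing in $r$ because the set on the right shrinks as $r$ grows.

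The only thing to verify is that $\phi$ is proper, and this is the one step requiring an actual argument (though still short). If $\phi$ failed to be proper, there would exist $M \geq 0$ and a sequence $h_n \in H$ with $d_H(1,h_n) \geq n$ yet $d_G(1,h_n) \leq M+1$ for all $n$. But then $\{h_n\} \subseteq H \cap B_{M+1}(1)$, which is a finite set, contradicting $d_H(1, h_n) \to \infty$. Hence $\phi(r) \to \infty$, and $i_H$ is $(\phi,\Phi)$--uniformly distorting. Since $i_H$ is surjective, it is trivially coarsely onto, so $i_H$ is a coarse isometry, completing the proof.
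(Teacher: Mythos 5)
Your proof is correct and follows the standard argument for this statement. The upper bound via the triangle inequality applied to a word in $S_H$ gives the linear $\Phi$, and the lower bound via local finiteness of $\mathscr{C}(G)$ (i.e., finiteness of $H \cap B_R(1)$) gives a proper $\phi$; together with surjectivity of $i_H$, this shows $i_H$ is a coarse isometry. The paper defers to \cite{Vav_qlines} rather than reproducing the proof, but this is exactly the expected argument. One minor remark: if $H$ is finite the defining infimum for $\phi(r)$ is over the empty set for large $r$, so strictly one should cap $\phi$ or dispose of the finite case separately, though the statement is trivial there.
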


Finally, we will need to understand from coarse geometry when subgroups of finitely generated groups are finitely generated themselves.
To this end, we introduce the following.
\begin{defn}\label{0-conn_defn}
We shall say that a subset $Z \subseteq \mathscr{C}(G)$ is {\em coarsely 0--connected} if there is some $r \geq 0$ such that $N_r(Z)$ is connected.
\end{defn}

\cite{Vav_qlines} implies the next fact.

\begin{prop}\label{subgroup_fg}
Let $G$ be a finitely generated group and let $H$ be a subgroup of $G$.
Then $H$ is finitely generated if and only if $H$ is coarsely 0--connected, as a subset of $\mathscr{C}(G)$.
\end{prop}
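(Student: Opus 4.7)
The plan is to prove both directions directly from the definitions, using the fact that a word-length path in $H$ translates into a $\mathscr{C}(G)$-path staying near $H$, and conversely that a $\mathscr{C}(G)$-path near $H$ can be approximated by a sequence of jumps by bounded-length elements of $H$.

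For the forward direction, suppose $H$ is generated by a finite set $T = \{t_1,\ldots,t_n\} \subseteq H$. Since $G$ has a fixed finite generating set, each $t_i$ has a finite $\mathscr{C}(G)$-word representation, so there is a constant $L$ with $d_G(1,t_i) \leq L$ for all $i$. I would then argue that $N_L(H)$ is connected in $\mathscr{C}(G)$ by showing that any two elements $h,h' \in H$ can be connected: write $h^{-1}h' = t_{i_1}^{\pm 1}\cdots t_{i_k}^{\pm 1}$ and consider the sequence $h,\, h t_{i_1}^{\pm 1},\, h t_{i_1}^{\pm 1} t_{i_2}^{\pm 1},\,\ldots,\, h'$ of elements of $H$. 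Consecutive terms differ by $t_{i_j}^{\pm 1}$, so they can be joined by a $\mathscr{C}(G)$-geodesic of length at most $L$; every vertex of such a geodesic lies within $L$ of one of its endpoints in $H$, hence inside $N_L(H)$. Concatenating gives a connected path in $N_L(H)$ from $h$ to $h'$, and since every vertex of $\mathscr{C}(G)$ within $L$ of $H$ is connected to $H$ by a geodesic of length $\leq L$, the whole neighborhood is connected.

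For the reverse direction, suppose $N_r(H)$ is connected for some $r \geq 0$, and define
\[ T := \{\, s \in H : d_G(1,s) \leq 2r+1 \,\}. \]
This set is finite because balls in $\mathscr{C}(G)$ are finite. I would show $T$ generates $H$. Given $h \in H$, choose a $\mathscr{C}(G)$-edge path $v_0=1, v_1,\ldots, v_k = h$ contained in $N_r(H)$. For each $i$, pick $h_i \in H$ with $d_G(v_i,h_i)\leq r$, taking $h_0 = 1$ and $h_k = h$. Then for consecutive indices,
\[ d_G(h_i, h_{i+1}) \leq d_G(h_i,v_i) + d_G(v_i,v_{i+1}) + d_G(v_{i+1},h_{i+1}) \leq 2r+1, \]
so $s_i := h_i^{-1}h_{i+1} \in T$ and $h = s_0 s_1 \cdots s_{k-1}$, exhibiting $h$ as a product of elements of $T$.

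There is no real obstacle here; the proposition amounts to packaging the standard fact that the word-metric on a finitely generated subgroup is always at least the ambient word-metric, and the Schreier-type observation that connectivity of a neighborhood lets one bound the generators by the neighborhood's radius. The only thing to be careful about is in the forward direction, where the interpolating geodesics must be checked to lie inside $N_L(H)$ rather than merely inside $\mathscr{C}(G)$; this is handled by observing that any vertex on such a geodesic is within $L$ of its endpoint in $H$. Once these two directions are assembled, the equivalence follows.
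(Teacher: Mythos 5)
Your proof is correct, and it is the standard direct argument; the paper itself does not prove Proposition~\ref{subgroup_fg} but instead cites \cite{Vav_qlines} for it, so there is no in-paper argument to compare against. In the reverse direction there is one small point worth spelling out: the hypothesis says $N_r(H)$ is connected as a subspace of the metric graph $\mathscr{C}(G)$, and you silently pass to an edge path $v_0,\ldots,v_k$ with all vertices in $N_r(H)$. This is justified, since any continuous path in $N_r(H)$ between two vertices visits a (possibly repeated) sequence of vertices of $\mathscr{C}(G)$, stays inside a single closed edge between consecutive vertex visits, and all vertices visited lie in $N_r(H)$; deleting backtracks yields the combinatorial path you use. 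With that filled in, the estimate $d_G(h_i,h_{i+1})\le 2r+1$ and the finiteness of $T$ (by local finiteness of $\mathscr{C}(G)$) complete that direction. In the forward direction your key observation --- that any point of an interpolating geodesic between consecutive elements of the sequence $h,\,ht_{i_1}^{\pm1},\ldots$ lies within $L$ of an endpoint belonging to $H$, and likewise for a geodesic from an arbitrary point of $N_L(H)$ to a nearest element of $H$ --- is exactly what makes $N_L(H)$ itself, not merely a larger neighborhood, connected.
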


Suppose that there is some subset $Y \subseteq \mathscr{C}(G)$ such that $d_{Haus}(Y,H)<\infty$, and note that $H$ is 0--connected if $Y$ is connected.
Hence Proposition \ref{subgroup_fg} implies the following.

\begin{cor}\label{Y_conn_implies_H_fg}
Let $G$ be a finitely generated group with $H$ a subgroup of $G$ and $Y$ a connected subset of $\mathscr{C}(G)$.
If $d_{Haus}(Y,H)<\infty$ then $H$ is finitely generated.
\end{cor}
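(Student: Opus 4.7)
The plan is to reduce the statement to Proposition \ref{subgroup_fg} by showing directly that $H$ is coarsely $0$--connected in $\mathscr{C}(G)$. Set $D := d_{Haus}(Y,H) < \infty$, so that $Y \subseteq N_D(H)$ and $H \subseteq N_D(Y)$. Fix any $r \geq D$; I claim $N_r(H)$ is connected. Note that for each $h \in H$, the inclusion $H \subseteq N_D(Y)$ yields some $y_h \in Y$ with $d(h,y_h) \leq D \leq r$, so $y_h \in N_r(\{h\}) \cap Y$.

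The next step is to verify that the pieces of an appropriate decomposition of $N_r(H)$ are all connected and meet each other. Since $\mathscr{C}(G)$ is a geodesic space, each closed ball $N_r(\{h\})$ is connected: any $x \in N_r(\{h\})$ is joined to $h$ by a geodesic of length $\leq r$, and every point of that geodesic has distance at most $r$ from $h$, hence lies in $N_r(\{h\})$. Using $Y \subseteq N_D(H) \subseteq N_r(H)$, I can write
\[
N_r(H) \;=\; Y \cup \bigcup_{h \in H} N_r(\{h\}).
\]
The set $Y$ is connected by hypothesis, each $N_r(\{h\})$ is connected by the previous observation, and each $N_r(\{h\})$ meets $Y$ at the point $y_h$. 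A union of connected sets, each intersecting a common connected set $Y$, is itself connected, so $N_r(H)$ is connected. By Definition \ref{0-conn_defn}, $H$ is coarsely $0$--connected as a subset of $\mathscr{C}(G)$, and Proposition \ref{subgroup_fg} then yields that $H$ is finitely generated.

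There is no real obstacle here; the entire content is the elementary observation that closed metric balls in a geodesic space are connected, combined with the Hausdorff bound which guarantees that $Y$ witnesses the connectedness by meeting every sufficiently large ball centered at a point of $H$. The only mild point to keep in mind is that ``connected'' (rather than path-connected) is all that is needed, so no path-connectedness hypothesis on $Y$ is required.
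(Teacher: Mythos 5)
Your proof is correct and follows essentially the same route as the paper: the paper simply notes that $H$ is coarsely $0$--connected when $Y$ is connected and $d_{Haus}(Y,H)<\infty$, then invokes Proposition \ref{subgroup_fg}, and your argument is just a careful spelling-out of that observation via the decomposition of $N_r(H)$ into balls all meeting $Y$.
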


\section{Detecting subgroups}\label{subgroups_section}

In this section, we will show that subsets of Cayley graphs that satisfy the deep, shallow, 3--separating and noncrossing conditions are, up to a finite Hausdorff distance, subgroups of the ambient group.
First we will need the following two lemmas.

\begin{lem}\label{cis}
Suppose that $Y, Y' \subseteq \mathscr{C}(G)$ are 2--separating and satisfy $deep(m_0)$ and the shallow condition, and that $Y' \subseteq N_r(Y)$.
Then there is some constant that we will denote by $r_1(r)$, but which depends on $r$ and $m_0$, such that $r_1(r)>r$ and $Y\subseteq N_{r_1(r)}(Y')$.
\end{lem}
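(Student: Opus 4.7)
I will argue by contradiction, taking $r_1 = r_1(r) > m_0(0) + r + m_1'(0)$, where $m_1'(0)$ bounds the shallow components of $\mathscr{C}(G) - Y'$ by the shallow condition for $Y'$. Suppose some $y_0 \in Y$ satisfies $d(y_0, Y') > r_1$.

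Apply $deep(m_0)$ to $Y$ at $y_0 \in Y = N_0(Y)$: the ball $B_{m_0(0)}(y_0)$ meets every deep component of $\mathscr{C}(G) - Y$, and using 2--separating for $Y$ I pick $d_1 \in D_1 \cap B_{m_0(0)}(y_0)$ and $d_2 \in D_2 \cap B_{m_0(0)}(y_0)$ in distinct deep components. Since $d(y_0, Y') > r_1 > m_0(0)$, the ball $B_{m_0(0)}(y_0)$ is disjoint from $Y'$; being connected it lies in a single component $C$ of $\mathscr{C}(G) - Y'$, giving $y_0, d_1, d_2 \in C$. Because $y_0 \in C$ lies further than $m_1'(0)$ from $Y'$, the component $C$ is deep, and by 2--separating for $Y'$ there is a second deep component $E \ne C$ of $\mathscr{C}(G) - Y'$.

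Since $y_0 \in C$ and $E \ne C$ lie in different components of $\mathscr{C}(G) - Y'$, any path from $y_0$ to a point of $E$ crosses $Y'$, so $d(y_0, E) \geq d(y_0, Y') > r_1$; equivalently, $E \cap B_{r_1}(y_0) = \emptyset$. My plan for the contradiction is to exhibit a point of $E$ inside $B_{r_1}(y_0)$. The mechanism is to exploit the fact that $Y' \subseteq N_r(Y)$ and that both $Y, Y'$ satisfy $deep(m_0)$: the local picture near $y_0$ should pair the two deep components $D_1, D_2$ of $\mathscr{C}(G) - Y$ with two distinct deep components of $\mathscr{C}(G) - Y'$; since $d_1 \in D_1$ is forced into $C$, the other deep component $D_2$ should pair with $E$, placing $d_2 \in D_2 \cap B_{m_0(0)}(y_0)$ in $E$, contradicting $d_2 \in C$.

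The main obstacle is making this local pairing precise. The underlying reason it should hold is that $\mathscr{C}(G) - N_r(Y) \subseteq \mathscr{C}(G) - Y'$, so each deep component of $\mathscr{C}(G) - N_r(Y)$ embeds into a unique deep component of $\mathscr{C}(G) - Y'$; I would then argue that this embedding is injective on the two deep pieces arising from $D_1, D_2$, using $deep(m_0)$ for $Y'$ at the appropriate level to produce $E$--witnesses near $y_0$, and using the shallow condition for $Y$ to bound bridge paths through $Y - Y'$ that could otherwise merge the images. The resulting constant $r_1(r)$ will be a linear combination of $r$, $m_0(0)$, and the shallow constants $m_1^Y(0), m_1'(0)$, and will automatically satisfy $r_1(r) > r$.
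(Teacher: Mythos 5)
Your argument has a genuine gap at the step you flag yourself: the ``local pairing'' of deep components of $\mathscr{C}(G)-Y$ with deep components of $\mathscr{C}(G)-Y'$ does not exist, and the mechanism you propose cannot be made to work. In fact you have already established the opposite of what the pairing would require: both $d_1\in D_1$ and $d_2\in D_2$ lie inside $B_{m_0(0)}(y_0)$, which is disjoint from $Y'$, so $d_1$ and $d_2$ land in the \emph{same} component $C$ of $\mathscr{C}(G)-Y'$. There is no tension here --- the picture of a point $y_0\in Y$ far from $Y'$, with a $Y'$-free ball around it entirely inside one component of $\mathscr{C}(G)-Y'$, is perfectly consistent and gives no contradiction by itself. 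The problem is structural: since $Y'\subseteq N_r(Y)$ but $Y'\not\subseteq Y$ in general, a component of $\mathscr{C}(G)-Y$ need not sit inside a single component of $\mathscr{C}(G)-Y'$ (it can meet $Y'$), so there is simply no injectivity to exploit. Your backup idea --- invoking $deep(m_0)$ for $Y'$ to produce $E$-witnesses near $y_0$ --- also fails, because that condition only controls balls around points of $N_s(Y')$, and $y_0$ is far from $Y'$ by hypothesis.

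The paper's proof runs through a different contradiction, and the key differences are worth internalizing. First, it applies $deep(m_0)$ for $Y$ at level $r$ (not level $0$), so the relevant deep components live in $\mathscr{C}(G)-N_r(Y)$, and \emph{these} do sit inside single components of $\mathscr{C}(G)-Y'$ because $Y'\subseteq N_r(Y)$. Second, the two points to which $deep(m_0)$ is applied are chosen in \emph{two distinct deep components of $\mathscr{C}(G)-Y'$} (not two distinct deep components of $\mathscr{C}(G)-Y$): if $Y$ had points $p_1,p_2$ in distinct deep components $D_1,D_2$ of $\mathscr{C}(G)-Y'$ with $d(p_i,Y')>m_0(r)$, then $B_{m_0(r)}(p_i)\subseteq D_i$, and any deep component $D$ of $\mathscr{C}(G)-N_r(Y)$ (lying in one $D_i$) misses one of these balls, contradicting $deep(m_0)$ at a point $p_i\in Y\subseteq N_r(Y)$. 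This gives the crucial one-sidedness: $Y$ can stick out past $N_{m_0(r)}(Y')$ into at most one deep component of $\mathscr{C}(G)-Y'$. Since $Y'$ is $2$-separating, there is another deep component $D'$ with $Y\cap D'\subseteq N_{m_0(r)}(Y')$, and the rest of the proof bounds $Y$ by working with a deep component of $\mathscr{C}(G)-N_{r+m_0(r)}(Y)$ inside $D'$ and estimating distances through its frontier. This produces the constant $2m_0(r+m_0(r))+m_0(r)$, which is not a linear combination of $m_0(0)$ and shallow constants as your proposal anticipates. To repair your argument you would essentially have to reorganize around the paper's first step; as written, the contradiction you aim for does not materialize.
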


\begin{proof}
We will show the lemma for $r_1(r)=[2m_0(r+m_0(r))+m_0(r)]$.

Suppose that there are there are two deep components of the complement of $Y'$, $D_1$ and $D_2$, such that $Y$ meets each $D_i$ in a point $p_i$ that is not contained in the $m_0(r)$-neighborhood of $Y'$.
Thus, for each $i$, $B_{m_0(r)}(p_i) \subseteq D_i$.
Since $Y' \subseteq N_r(Y)$, the components of the complement of $N_r(Y)$ are contained in the components of the complement of $Y'$.
In particular, any deep component of the complement of $N_r(Y)$ is disjoint from at least one of $D_1$ or $D_2$.
But then this component must be disjoint from $B_{m_0(r)}(p_1)$ or from $B_{m_0(r)}(p_2)$, contradicting that $Y$ satisfies $deep(m_0)$.

Hence, since $Y'$ is 2--separating, there is a deep component $D'$ of the complement of $Y'$ such that $(Y \cap D') \subseteq N_{m_0(r)}(Y')$.
Since $D'$ is not contained in any uniform neighborhood of $Y'$, it follows that it is also not contained in any uniform neighborhood of $Y$.
Since $N_{m_0(r)}(Y') \subseteq N_{r+m_0(r)}(Y)$, and $Y$ satisfies the shallow condition, there must be a deep component $D$ of the complement of $N_{r+m_0(r)}(Y)$ that is contained in $D'$.
Let $fr(D)$ denote the frontier of $D$, and note that since $D$ is deep, $d_{Haus}(fr(D), Y) \leq m_0(r+m_0(r))$.

For any point $p \in D'$, consider a shortest path from $p$ to $Y$.
If this path meets $Y'$, then $d(p,Y') \leq d(p,Y)$.
Otherwise, the path is entirely contained in $D'$, so its endpoint is in $(Y\cap D')$.
Recall that $(Y \cap D') \subseteq N_{m_0(r)}(Y')$, and it follows that $d(p,Y') \leq (d(p, Y)+m_0(r))$.
Hence in either case, $d(p,Y') \leq (d(p, Y)+m_0(r))$.
Since $fr(D) \subseteq N_{m_0(r+m_0(r))}(Y)$, and also $fr(D) \subseteq D'$, it follows that $fr(D)$ is contained in the $[m_0(r+m_0(r))+m_0(r)]$-neighborhood of $Y'$.  
Since $Y \subset N_{m_0(r+m_0(r))}(fr(D))$, it follows that $Y$ is contained in the $[2m_0(r+m_0(r))+m_0(r)]$-neighborhood of $Y'$, as desired.
\end{proof}

The next result is a slight generalization of a lemma from \cite{Vav_qlines}.

\begin{lem} \label{seq_is_fin}
Let $G$ be a finitely generated group and let $\mathscr{Y}$ be a collection of 3--separating subsets of $\mathscr{C}(G)$ that satisfies $noncrossing(k)$, and assume that each $Y \in \mathscr{Y}$ satisfies $deep(m_0)$ and $shallow(m_1)$.
Moreover, suppose that there is some ball $B_s(v)$ in $\mathscr{C}(G)$ that meets each $Y \in \mathscr{Y}$.

Then there is a constant $x_1$ (which is independent of $s$) such that if, for all distinct $Y, Y' \in \mathscr{Y}$, $d_{Haus}(Y ,Y') > x_1$, then $\mathscr{Y}$ is finite.
\end{lem}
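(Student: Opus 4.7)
The plan is to argue by contradiction, assuming $\mathscr{Y}$ is infinite, and try to derive a uniform upper bound on $d_{Haus}(Y, Y')$ for some pair $Y, Y' \in \mathscr{Y}$ that depends only on $m_0$, $m_1$ and $k$; this will then contradict the pairwise Hausdorff lower bound once $x_1$ is chosen larger than that constant.

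First I would record an elementary finiteness observation: for each $Y \in \mathscr{Y}$, the number of deep components of $\mathscr{C}(G) - Y$ is finite. Indeed, $deep(m_0)$ forces every such component to intersect the bounded set $B_{m_0(0)}(p)$ for any fixed $p \in Y$; local finiteness of $\mathscr{C}(G)$ together with disjointness of the components then bounds their number. Combining this with $noncrossing(k)$, an iterated pigeonhole (or diagonal) argument extracts from $\mathscr{Y}$ an infinite sequence $(Y^{(n)})_{n \geq 0}$ together with, for each $n$, a deep component $D^{(n)}$ of $\mathscr{C}(G) - Y^{(n)}$, such that $Y^{(m)} \subseteq N_k(D^{(n)})$ whenever $m > n$. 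Thus, in this subsequence, any later $Y^{(m)}$ lies consistently on one ``side'' of any earlier $Y^{(n)}$.

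Applying $noncrossing(k)$ in the opposite direction, for each $n < m$ there is some deep component $E^{(m)}_n$ of $\mathscr{C}(G) - Y^{(m)}$ with $Y^{(n)} \subseteq N_k(E^{(m)}_n)$. The crux of the argument is to show, for $x_1$ chosen sufficiently large depending only on $m_0, m_1, k$, that one must have $E^{(m)}_n \neq D^{(m)}$. Otherwise $Y^{(n)}$ and $Y^{(m+1)}$ both lie in $N_k(D^{(m)})$ while meeting the common ball $B_s(v)$, and I would combine $shallow(m_1)$ with an application of Lemma \ref{cis}---the tool designed precisely to convert one-sided neighborhood containment into a two-sided Hausdorff bound---to force $d_{Haus}(Y^{(n)}, Y^{(m+1)}) \leq C(m_0, m_1, k)$, a contradiction once $x_1 > C$. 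Having ruled this case out, the subsequence is linearly nested: each $Y^{(m)}$ separates its predecessors from its successors.

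From this nesting, together with the $3$-separating condition (which guarantees a third deep component of $\mathscr{C}(G) - Y^{(m)}$ persistently unused by the nesting) and the fact that each $Y^{(n)}$ passes through $B_s(v)$, one more Lemma \ref{cis}-style argument (now exploiting the ``third side'' of some $Y^{(m)}$ near $v$) should yield a pair $Y^{(n)}, Y^{(m)}$ with $d_{Haus}(Y^{(n)}, Y^{(m)}) \leq C'(m_0, m_1, k)$, completing the contradiction once $x_1 > \max\{C, C'\}$. Since both $C$ and $C'$ depend only on $m_0$, $m_1$ and $k$, the threshold $x_1$ is independent of $s$, as required.

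The principal obstacle I anticipate is the application of Lemma \ref{cis}: its hypothesis is $Y' \subseteq N_r(Y)$, whereas we only have $Y^{(n)} \subseteq N_k$ of a deep component of the complement of $Y^{(m)}$. The technical work is therefore to replace $N_k(D^{(m)})$ by a suitable uniform neighborhood of $Y^{(n)}$ (or of $Y^{(m+1)}$), using the shallow condition together with the common basepoint $v$, so that Lemma \ref{cis} actually applies and delivers the needed uniform Hausdorff bound.
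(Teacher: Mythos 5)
You have identified the right opening moves---pigeonhole on the finitely many deep complementary components to extract a nested subsequence, and recognition that 3--separation supplies a persistent ``third side''---but the mechanism you propose for closing the argument is not the one that works, and the step you designate as the crux is both unprovable and unnecessary.

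First, you want to show that $E^{(m)}_n \neq D^{(m)}$, i.e.\ that the ``forward'' and ``backward'' sides of $Y^{(m)}$ are genuinely distinct. The paper does not prove this and explicitly allows the analogous $C_i = C_i'$; the 3--separating hypothesis is used precisely to pick a third component $D_i$ that avoids both, so the question of whether they coincide never needs to be settled. Your proposed justification---that if $Y^{(n)}$ and $Y^{(m+1)}$ both lie in $N_k(D^{(m)})$ and both meet $B_s(v)$ then Lemma~\ref{cis} bounds $d_{Haus}(Y^{(n)}, Y^{(m+1)})$---does not go through: $D^{(m)}$ is a deep, hence unbounded, region, so lying in its $k$--neighborhood is an essentially vacuous constraint, and Lemma~\ref{cis} needs the much stronger hypothesis $Y' \subseteq N_r(Y)$, which you correctly flag as unavailable but then rely on anyway.

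Second, and more fundamentally, the contradiction the paper derives is not a Hausdorff-distance bound between two members of $\mathscr{Y}$. Instead, the paper uses the 3--separating condition to select, for each $Y_i$ in the extracted sequence, a deep component $D_i$ of $\mathscr{C}(G) - Y_i$ distinct from both $C_i$ and $C_i'$, and then a deep component $E_i$ of $\mathscr{C}(G) - N_k(Y_i)$ inside $D_i$ (using $shallow(m_1)$). The lower bound $d_{Haus}(Y_i, Y_j) > x_1$ is used---via $Y_i \nsubseteq N_{x_2}(Y_j)$ with $x_2 > m_0(k)$---to find a ball $B_{x_2}(p) \subseteq Y_i$ missing $Y_j$, and the deep condition then connects this ball to $E_i$; a separation argument shows the $E_i$ are pairwise disjoint. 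Since every $Y_i$ meets $B_s(v)$, the deep condition forces every $E_i$ to meet the fixed bounded ball $B_{s+m_0(k)}(v)$, and infinitely many pairwise disjoint subsets of a locally finite graph cannot all meet a bounded ball. This local-finiteness contradiction is the actual engine of the proof; your proposal never arrives at it, and the Hausdorff-bound contradiction you aim for would require information that the hypotheses do not supply.
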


\begin{proof}
By Lemma \ref{cis}, for every $r \geq 0$ there is a constant $r_1(r) > r$ such that for all $Y, Y' \in \mathscr{Y}$, $Y' \subseteq N_r(Y)$ implies $d_{Haus}(Y,Y') \leq r_1(r)$.
Let $x_2 > m_0(k)$ and let $x_1 = r_1(x_2)$.
Then suppose both that $d_{Haus}(Y ,Y') > x_1$ for all $Y, Y' \in \mathscr{Y}$, and that $\mathscr{Y}$ is infinite.
Hence for any $Y, Y' \in \mathscr{Y}$, $Y \nsubseteq N_{x_2}(Y')$.

Choose any $Y_0 \in \mathscr{Y}$.
As $Y_0$ satisfies $deep(m_0)$ and $\mathscr{C}(G)$ is locally finite, the complement of $Y_0$ has only finitely many deep components.
As $\mathscr{Y}$ satisfies $noncrossing(k)$, there must be some deep component $C_0$ whose $k$--neighborhood contains infinitely many elements of $\mathscr{Y}$.

Let $\mathscr{Y}_1 = \{ Y \in [\mathscr{Y}-\{Y_0\}] : Y \subseteq N_k(C_0)\}$.
Choose $Y_1$ from $\mathscr{Y}_1$, and let $C_1'$ be the deep component of the complement of $Y_1$ whose $k$--neighborhood contains $Y_0$.
As $\mathscr{Y}_1$ is infinite, there is some deep component of the complement of $Y_1$ whose $k$--neighborhood contains infinitely many elements of $\mathscr{Y}_1$.
Let $C_1$ denote this component, and let $\mathscr{Y}_2 = \{ Y \in [\mathscr{Y}_1-\{ Y_0, Y_1\} ] : Y \subseteq N_k(C_1)\}$.
Choose $Y_2$ from $\mathscr{Y}_2$, and continue on in this manner.

This process produces an infinite sequence of elements of $\mathscr{Y}$, $\{ Y_i\}$, and subsets of $\mathscr{C}(G)$, $\{ C_i\}$ and $\{ C_i'\}$, such that, for each $i$, $C_i$ is a deep component of the complement of $Y_i$ such that $Y_j \subseteq N_k(C_i)$ for all $j>i$, and $C_i'$ is a deep component of the complement of $Y_i$ with $Y_j \subseteq N_k(C_i)$ for all $j<i$ (with perhaps $C_i = C_i'$).
Each $Y_i$ is 3--separating, so we may set $D_i$ to be a deep component of the complement of $Y_i$ that is not equal to $C_i$ or $C_i'$.

We will see next that the $D_i$'s are essentially disjoint.
We have that $(D_i-N_k(Y_i))$ is a collection of deep and shallow components of the complement of $N_k(Y_i)$.
Since $D_i$ is a deep component of the complement of $Y_i$ and $Y_i$ satisfies $shallow(m_1)$, it follows that $(D_i-N_k(Y_i))$ must contain a deep component of the complement of $N_k(Y_i)$, say $E_i$.

Now fix $i$ and $j$ to be distinct.
Since $Y_i$ is not contained in the $x_2$--neighborhood of $Y_j$, there must be some point $p \in Y_i$ such that $B_{x_2}(p)$ does not intersect $Y_j$.
As $x_2>m_0(k)\geq k$, $B_{x_2}(p)$ is contained in $C_j$ or $C_j'$.
Also $B_{x_2}(p)$ meets each deep component of the complement of $N_k(Y_i)$, and hence $(B_{x_2}(p) \cup E_i)$ is connected.

As $Y_j$ is contained in the $k$--neighborhood of $C_i$ or $C_i'$, we have that it is disjoint from $(D_i-N_k(Y_i))$, hence $Y_j$ does not meet $E_i$, or the union $(B_{x_2}(p) \cup E_i)$.
It follows that this union is contained in $C_j$ or $C_j'$, so is disjoint from $D_j$, and hence from $E_j \subseteq D_j$.
Thus, the $E_i$'s are disjoint.

Now we recall that all $Y \in \mathscr{Y}$ meet the ball $B_s(v)$, and hence $B_{s+m_0(k)}$ meets each $E_i$.
But there are infinitely many $E_i$'s, which we now know to be disjoint, while $\mathscr{C}(G)$ is locally finite, so we have reached a contradiction.
\end{proof}

The next theorem is one of our main results.  It makes use of an argument given in \cite{Papa_asym_top}.

\begin{thm}\label{subgroup_thm}
Let $G$ be a finitely generated group, let $Y$ be a subgraph of $\mathscr{C}(G)$ that satisfies the deep, shallow, 3--separating and noncrossing conditions.
Then $G$ contains a subgroup $H$ such that 
$$d_{Haus}(Y,H) < \infty.$$
\end{thm}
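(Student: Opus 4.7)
My plan is to take $H := \{g \in G : d_{Haus}(gY, Y) < \infty\}$ as the candidate subgroup. Since $G$ acts on $\mathscr{C}(G)$ by isometries, $H$ is readily a subgroup: for $g_1, g_2 \in H$, $d_{Haus}(g_1 g_2 Y, Y) \leq d_{Haus}(g_1 g_2 Y, g_1 Y) + d_{Haus}(g_1 Y, Y) = d_{Haus}(g_2 Y, Y) + d_{Haus}(g_1 Y, Y)$, and $d_{Haus}(g^{-1} Y, Y) = d_{Haus}(Y, gY)$. After translating $Y$, which replaces the eventual $H$ by a conjugate and preserves the four conditions, I may assume some chosen vertex $y_0$ of $Y$ is the identity $1 \in G$. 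The proof then reduces to proving the two containments $Y \subseteq N_R(H)$ and $H \subseteq N_{R'}(Y)$.

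For the first containment, I would apply Lemma \ref{seq_is_fin} to the family of translates $\mathscr{Y}_1 = \{y^{-1}Y : y \in V(Y)\}$. Every element of $\mathscr{Y}_1$ contains $1$, hence meets $B_1(1)$, and is a $G$-translate of $Y$ that therefore satisfies the same deep, shallow, $3$-separating and noncrossing constants. Left multiplication by $y$ gives $d_{Haus}(y^{-1}Y, y'^{-1}Y) = d_{Haus}(yy'^{-1}Y, Y)$, which is finite iff $yy'^{-1} \in H$, iff $y$ and $y'$ lie in a common right coset of $H$. Choosing one representative per equivalence class under finite Hausdorff distance yields a subfamily with all pairwise Hausdorff distances infinite, in particular exceeding $x_1$; Lemma \ref{seq_is_fin} then forces this subfamily to be finite. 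Thus $V(Y)$ meets only finitely many right cosets of $H$, say $Hy_1, \ldots, Hy_n$, and each vertex $y = hy_i \in V(Y)$ satisfies $d(y, h) = |y_i|$, so $Y \subseteq N_R(H)$ for $R = \max_i |y_i|$ (up to a bounded correction for non-vertex points of $Y$).

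For the second containment, it suffices to establish a uniform bound $d_{Haus}(gY, Y) \leq C$ for all $g \in H$; then $g = g \cdot 1 \in gY \subseteq N_C(Y)$ gives $d(g, Y) \leq C$ and hence $H \subseteq N_C(Y)$. This uniform bound is the main obstacle. I expect to argue by contradiction: a sequence $g_n \in H$ with $d_{Haus}(g_n Y, Y) \to \infty$ would, by Lemma \ref{cis}, produce points $p_n \in g_n Y$ with $d(p_n, Y) \to \infty$; writing $p_n = g_n y_n$ with $y_n \in V(Y)$ and normalizing by suitable group translations (using the finite coset decomposition of the first part to anchor the picture near the identity), the aim is to extract infinitely many $G$-translates of $Y$ that meet a common bounded ball and are pairwise at Hausdorff distance exceeding $x_1$, contradicting Lemma \ref{seq_is_fin}. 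Carrying out this reduction rigorously is where the combinatorial machinery of \cite{Papa_asym_top} enters: the noncrossing condition, together with the local finiteness given by Lemma \ref{seq_is_fin}, must be used to rule out Hausdorff distances blowing up within a single equivalence class of translates. Combining the two containments then yields $d_{Haus}(H, Y) < \infty$.
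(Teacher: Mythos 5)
Your candidate $H := \{g \in G : d_{Haus}(gY,Y) < \infty\}$ is in fact the right subgroup to take (it coincides with the commensurizer of the subgroup the paper constructs, and under these hypotheses that commensurizer is a finite Hausdorff distance from $Y$). Your first containment, $Y \subseteq N_R(H)$, is correct and is a cleaner route than the paper's: normalizing so $1 \in V(Y)$, the family $\{y^{-1}Y : y \in V(Y)\}$ all meet $\overline{B}_1(1)$, Lemma \ref{seq_is_fin} bounds the number of finite-Hausdorff-distance classes, and these classes are exactly the right cosets of $H$ meeting $V(Y)$. That much compiles.

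The gap is the second containment, and your sketch of it would not go through as written. If $g_n \in H$ with $d_{Haus}(g_n Y, Y) \to \infty$, Lemma \ref{cis} does produce points $p_n \in g_n Y$ (or in $Y$) whose distance to the other set blows up, but after translating by $p_n^{-1}$ the two relevant translates are $p_n^{-1}g_nY$ (which contains $1$, so this family is just the already-analyzed $\{y^{-1}Y\}$) and $p_n^{-1}Y$ (which recedes from every fixed ball). There is no way to extract from this data an infinite collection of translates that simultaneously meets a common bounded ball and is pairwise at Hausdorff distance exceeding $x_1$, so Lemma \ref{seq_is_fin} cannot be invoked. The phrase ``normalizing by suitable group translations \ldots to anchor the picture near the identity'' is precisely where an actual idea is required, and the one that works is of a different shape. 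What the paper does instead is give a direct, non-sequential bound: fix $h$ with $d_{Haus}(hY,Y) < \infty$ and consider two exhaustive cases using the noncrossing condition. Either (a) $hY$ is wholly nested in one deep component $C_i$ of $\mathscr{C}(G)-Y$ and $Y$ is wholly nested in one deep component $hC_{i'}$ of $\mathscr{C}(G)-hY$ --- in which case the 3--separating property forces at least two distinct translated deep components $hC_j$ inside $C_i$, and a coarse-permutation argument on the (finitely many) deep components yields a contradiction with disjointness/deepness; or (b) $hY$ meets $N_k(Y)$ in some vertex $z$, so both $z^{-1}Y$ and $z^{-1}hY$ lie in $\mathscr{Y}$, and because they are at finite Hausdorff distance from each other they fall into the same equivalence class, giving the \emph{uniform} bound $d_{Haus}(hY,Y) < 2\mu$ from claim (*). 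Since $h \in hY$, this gives $h \in N_{2\mu}(Y)$. This dichotomy (nested versus linked, with the nested branch killed by noncrossing plus 3--separation) is the Papasoglu-style ingredient you gesture at but do not supply, and without it the containment $H \subseteq N_C(Y)$ is not established. You would also need to observe, to justify using your larger $H$ rather than the subgroup fixing each deep component coarsely, that an element of your $H$ induces a coarse permutation of the finitely many deep components and hence the component-fixing subgroup has finite index --- but that is a minor fix once the dichotomy argument is in place.
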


\begin{proof}
Suppose that $Y$ is as in the statement of the theorem.
We can assume that $Y$ contains $e \in G$ and is an infinite subgraph of $\mathscr{C}(G)$.
Let $Y$ satisfy $deep(m_0)$ and $noncrossing(k)$.

Let $\mathscr{Y} = \{ gY : gY$ meets the closed ball $\overline{B}_{k}(e) \}$.
Note that $\mathscr{Y}$ satisfies the hypotheses of Lemma \ref{seq_is_fin}; let $x_1$ be the constant from that lemma.
Consider two elements of $\mathscr{Y}$ equivalent if they are of finite Hausdorff distance from each other, and let $\{ \mathscr{Y}_i\}$ be the collection of equivalence classes of $\mathscr{Y}$.  

First, we will show the following claim:
\\

\noindent (*) $\mathscr{Y}$ is made up of only finitely many equivalence classes $\mathscr{Y}_i$, and, for each $i$ and each $gY \in \mathscr{Y}_i$ we have that
$$\sup_{g'Y \in \mathscr{Y}_i} d_{Haus}(gY, g'Y)$$
is finite.
\\

For suppose that either there are infinitely many equivalence classes $\mathscr{Y}_i$, or that there is some $i$ and some $gY \in \mathscr{Y}_i$ such that 
$$\sup_{g'Y \in \mathscr{Y}_i} d_{Haus}(gY, g'Y) = \infty.$$
In either case, there must be an infinite sequence $\{ g_i Y\} \subseteq \mathscr{Y}$ such that $d_{Haus}(g_iY, g_jY)>x_1$ for all $i \neq j$.
However, this violates the conclusion of Lemma \ref{seq_is_fin}.
Thus (*) must hold.
\\

For each $i$, fix a representative $g_i Y \in \mathscr{Y}_i$, and let 
$$\mu > \max_i \sup_{gY \in \mathscr{Y}_i} d_{Haus}(g_iY, gY).$$
Thus, for each $i$ and any $gY, g'Y \in \mathscr{Y}_i$, $d_{Haus}(g_iY, gY)< \mu$ and 

\noindent $d_{Haus}(gY, g'Y)<2\mu$. 

An argument similar to the following was used by Papasoglu in the proof of Lemma 2.3 of \cite{Papa_asym_top}.
Let $\{ C_1, \ldots , C_n\}$ denote the deep components of the complement of $Y$, and let $\{D_1, \ldots , D_m\}$ denote the deep components of the complement of $N_\mu(Y)$.  
If $g$ is a vertex in $N_{k}(Y)$, then $g^{-1}Y$ meets $\overline {B}_{k}(e)$ so $g^{-1}Y \in \mathscr{Y}_i$ for some $i$, and hence $d_{Haus}(g^{-1}Y, g_iY)< \mu$.
Thus note that, for any $j$, there exist $k_1, \ldots , k_l$ such that $g^{-1}C_j$ contains $g_iD_{k_1} \coprod \ldots \coprod g_iD_{k_l}$, and is disjoint from $g_iD_{\hat{k}}$ for all $\hat{k} \notin \{ k_1, \ldots , k_l\}$.
Recalling that all translates of $Y$ satisfy the deep and shallow conditions, it follows that $d_{Haus}(g^{-1}C_j, [g_iD_{k_1} \coprod \ldots \coprod g_iD_{k_l}])<\infty$.

Thus, given a vertex $g \in N_{k}(Y)$, we can define a function 
$$f_g \co \{ C_1, \ldots , C_n\} \to \mathcal{P} \{ D_1, \ldots , D_m\},$$
where $\mathcal{P} \{ D_1, \ldots , D_m\}$ denotes the power set on $\{ D_1, \ldots , D_m\}$, such that $f_g(C_j) = \{ D_{k_1}, \ldots , D_{k_l}\}$ if $g^{-1}Y \in \mathscr{Y}_i$ and $g^{-1}C_j$ is a finite Hausdorff distance from $[g_iD_{k_1} \coprod \ldots \coprod g_iD_{k_l}]$.
Then, if $g,g' \in N_{k}(Y)$, we shall write $g \sim g'$ if $g^{-1}Y, (g')^{-1}Y \in \mathscr{Y}_i$ for some $i$, and $f_g = f_{g'}$.
Note that there are only finitely many equivalence classes.
Note also that if $g \sim g'$, then $2\mu > d_{Haus}(g^{-1}Y, (g')^{-1}Y) = d_{Haus}(g'g^{-1}Y, Y)$, and, 
for each $j$, $d_{Haus}(g^{-1}C_j, (g')^{-1}C_j) = d_{Haus}(g'g^{-1}C_j, C_j)$ is finite.

Let $R>0$ be such that $B_R(e)$ contains a member of each equivalence class from the equivalence relation on the vertices of $N_{k}(Y)$.
Then for each vertex $g \in N_{k}(Y)$, let $\tau_g \in B_R(e)$ denote a vertex of $N_{k}(Y)$ that is equivalent to $g$.
Let $H$ be the subgroup of $G$ that is generated by the elements $g\tau_g^{-1}$ for all vertices $g \in N_{k}(Y)$.
Thus, for each $h \in H$ and for all $j$, both $d_{Haus}(Y, hY)$ and $d_{Haus}(C_j, hC_j)$ are finite.

Note that the vertices of $Y$ are contained in $H(B_R(e)) = N_R(H)$, and hence $Y \subseteq N_{R+1}(H)$.
On the other hand, we claim that $H$ is contained in a uniform neighborhood of $Y$.
To see this, fix $h \in H$ and suppose that $hY$ is contained in some $C_i$.
If also $Y \subseteq hC_{i'}$ for some $i'$ then, for all $j\neq i'$, the region $hC_j$ meets $hY \subseteq C_i$ and does not meet $Y \subseteq hC_{i'}$, hence is contained in $C_i$.
Since $Y$, and hence $hY$, is 3--separating, we have that $C_i$ contains more than one component $hC_j$.  
But $d_{Haus}(C_j, hC_j)<\infty$ for all $j$, so this cannot be the case.

Otherwise, we have that $hY$ meets $Y$, $hY$ meets more than one component $C_i$, or $Y$ meets more than one component $hC_i$.
In the latter two cases, the noncrossing condition implies that there is some vertex $z \in (hY \cap N_{k}(Y))$.
Thus in any case, we have the existence of some such $z$.
Hence $e \in z^{-1}hY$ and $z^{-1}Y$ meets $\overline {B}_{k}(e)$, so $z^{-1}hY, z^{-1}Y \in \mathscr{Y}$.
We noted earlier that $d_{Haus}(Y, hY)<\infty$, so since $d_{Haus}(z^{-1}Y, z^{-1}hY) = d_{Haus}(Y, hY)$, we must have that $z^{-1}Y$ and $z^{-1}hY$ are both in some $\mathscr{Y}_i$ and hence that $d_{Haus}(z^{-1}Y, z^{-1}hY) = d_{Haus}(Y, hY)<2\mu$.
As $h \in hY$, it follows that $h \in N_{2\mu}(Y)$ and thus $H \subseteq N_{2\mu}(Y)$.

Thus $d_{Haus}(H,Y)$ is bounded by $\max \{ R+1, 2\mu\}$.
\end{proof}

We conclude with a couple of observations related to what we saw in Section \ref{ud_section}.

\begin{rmk}\label{Y_ci_H}
In the proof of Theorem \ref{subgroup_thm}, we have given the subgroup $H$ via an infinite generating set.  
However, if $Y$ connected, then it follows from Corollary \ref{Y_conn_implies_H_fg} that $H$ must actually be finitely generated.

In the case that $H$ is finitely generated, we can consider $H$ with respect to its own intrinsic geometry, $(H, d_H)$.
In general we might have that $H$ is badly distorted in $G$, so we cannot expect that $(H,d_H)$ is quasi-isometric to $Y$.
However, it follows from Lemma \ref{inclusion_is_wup} that the two spaces are coarsely isometric.
\end{rmk}

We will see in Section \ref{algebraic_invariants} that certain coarse geometric information about $Y$ implies algebraic information about $H$.

\section{A partial converse to the Flat Torus Theorem}\label{noncrossing_section}

The noncrossing condition is known to be satisfied by ``quasi-lines'' (i.e. uniform neighborhoods of images of $\mathbb{R}$ under uniformly distorting maps) in the setting that we are working in, if they are contained in Cayley graphs of finitely presented, one-ended groups.
Indeed, Proposition 2.1 of \cite{Papa_qlines} shows essentially that any quasi-line contained in the Cayley graph of a finitely presented, one-ended group that satisfies the shallow and 3--separating conditions also satisfies the noncrossing condition.  
(See also \cite{Vav_qlines}.)
(In this setting, the quasi-line will automatically satisfy the deep condition.  See \cite{Vav_qlines}.)

The situation for more general subsets appears to be trickier.
In Proposition \ref{cat0_prop}, we will prove that the noncrossing condition is satisfied in a certain CAT(0) setting.

Recall that the Flat Torus Theorem implies that if a group $G$ acts geometrically on a CAT(0) space $X$, and $H \cong \mathbb{Z}^n$ is a subgroup of $G$, then $X$ contains an isometrically embedded copy of $\mathbb{E}^n$, that $H$ acts on with torus quotient.  

The full converse to the Flat Torus Theorem is false --- that is, if $G$ acts geometrically on a CAT(0) space $X$, and $F_0$ is a Euclidean flat in $X$, then $F_0$ will not necessarily be a finite Hausdorff distance from an orbit of a $\mathbb{Z}^n$ subgroup of $G$, as we saw in Remark \ref{counterexs}.
However, by combining Proposition \ref{cat0_prop} below with Theorem \ref{subgroup_thm}, we will get Theorem \ref{flat_torus_converse}, which is a partial converse to the Flat Torus Theorem.

\begin{prop}\label{cat0_prop}
Let $X$ be a CAT(0) space and let $F_0,F_0'\subseteq X$ be isometrically embedded copies of Euclidean space $\mathbb{E}^n$.
Let $R\geq 0$ and let $F = N_R(F_0)$ and $F' = N_R(F_0')$.
Suppose that both $F$ and $F'$ satisfy $deep(m_0)$ and $shallow(m_1)$, and that $F$ is 3--separating.

Then there is some constant $k' = k'(m_0, m_1, R)$ such that $F$ is contained in the $k'$--neighborhood of a deep component of the complement of $F'$.
\end{prop}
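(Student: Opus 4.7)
My plan is to exploit the CAT(0) convexity of $F_0$ and $F_0'$ together with the $3$--separating hypothesis on $F$, via a case analysis on the intersection pattern of $F_0$ with $F'$. Since isometric copies of $\mathbb{E}^n$ are convex in any CAT(0) space, both $F_0$ and $F_0'$ are convex in $X$, and so are the $R$--neighborhoods $F$ and $F'$. Therefore $F_0 \cap F'$ is a closed convex subset of $F_0 \cong \mathbb{E}^n$---it is the sublevel set of the convex function $p \mapsto d(p, F_0')$ on $F_0$---and, by the elementary fact that the complement of a closed convex subset of $\mathbb{E}^n$ has at most two connected components, $F_0 \setminus F'$ has at most two components.

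Next I would dispatch the easy cases. If $F_0 \subseteq F'$, then $F \subseteq N_{2R}(F_0') \subseteq N_{2R}(F')$, and $deep(m_0)$ on $F'$ gives $F \subseteq N_{2R+m_0(0)}(D)$ for every deep component $D$ of $X \setminus F'$. If $F_0 \setminus F'$ has exactly one (nonempty) component $C_0$, then by connectedness $C_0$ lies in a single component $C$ of $X \setminus F'$, so all of $F$ lies in a uniform neighborhood of $C \cup F'$; if $C$ is deep we are done, and if $C$ is shallow then $shallow(m_1)$ on $F'$ gives $C \subseteq N_{m_1(0)}(F')$, placing $F$ in a uniform neighborhood of $F'$, whence $deep(m_0)$ on $F'$ completes the argument. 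The same strategy works when $F_0 \setminus F'$ has two components $A, B$ that happen to lie in the same component of $X \setminus F'$.

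The essential case is when $F_0 \setminus F'$ has two components $A, B$ lying in \emph{distinct} components of $X \setminus F'$. In this ``transverse'' configuration, $F_0 \cap F'$ is a convex set separating $F_0 \cong \mathbb{E}^n$ and must therefore contain a translate of an $(n-1)$--dimensional affine subspace $H \subseteq F_0$. Since $H \subseteq F_0 \cap F' \subseteq N_R(F_0')$, standard CAT(0) parallel--flats arguments (the higher--dimensional version of the Flat Strip Theorem) force $H$ to be parallel to an $(n-1)$--dimensional sub--flat $H'$ of $F_0'$, with an $n$--dimensional flat strip spanning them. Using this rigid parallel--flat structure together with $deep(m_0)$ on $F$, I would derive a contradiction with $F$ being $3$--separating: the transverse configuration pins the deep components of $X \setminus F$ into only two asymptotic ``sides'' determined by $A$ and $B$, leaving no room for a third deep component at uniformly bounded distance from all of $F$.

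The main obstacle will be this final step: rigorously showing that the transverse, parallel--flat configuration is incompatible with $F$ being $3$--separating while satisfying $deep(m_0)$. Making this precise requires carefully combining the CAT(0) flat--strip machinery (to pin down the perpendicular geometry of $F$ relative to $F'$) with the uniform quantitative bounds provided by $deep(m_0)$ and $shallow(m_1)$ on both $F$ and $F'$ (to rule out any ``third side'' persisting at a uniformly bounded distance from $F$).
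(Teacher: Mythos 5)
Your case analysis on $F_0 \cap F'$ via convexity is the right framework, and your handling of the easy cases (where $F_0$ meets at most one component of $X \setminus F'$) is essentially correct and matches the paper. However, the essential ``transverse'' case has genuine gaps.

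First, the detour through a higher-dimensional Flat Strip Theorem is unnecessary and potentially problematic. You do not need $H$ to be parallel to a sub-flat of $F_0'$. The paper uses only elementary CAT(0) convexity: pick $q \in F' \cap F_0$ and points $v_i \in F_0 \cap C_i$, consider the geodesic rays $l_i \subseteq F_0$ from $q$ through $v_i$. Convexity of $F'$ forces the subray of $l_i$ past $v_i$ to stay in $C_i$, and a CAT(0) comparison estimate then shows that far enough out along $l_i$ you find a point $w_i \in F_0$ with $B_{m_0(R)}(w_i) \subseteq C_i$. No parallel-flat rigidity is needed.

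Second, and more importantly, your plan for the contradiction --- that ``the transverse configuration pins the deep components of $X \setminus F$ into only two asymptotic sides'' --- is not the right formulation and misses the key insight. The argument is not that $X \setminus F$ has only two deep components. Rather, one uses the convexity of $F \cap F_0'$ in $F_0'$ to conclude that $F_0' \setminus F$ has at most two components; hence, since $F$ is $3$--separating, some deep component $X_0$ of $X \setminus F$ avoids $F_0'$ entirely. The shallow condition on $F$ then produces a deep component $X_1$ of $X \setminus N_R(F)$ inside $X_0$, and since $X_0 \cap F_0' = \emptyset$ and $F' = N_R(F_0')$, we get $X_1 \cap F' = \emptyset$. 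Finally, $deep(m_0)$ applied to $F$ at the points $w_i \in F_0 \subseteq N_R(F)$ forces $B_{m_0(R)}(w_i)$ to meet $X_1$, so the connected set $X_1$ meets both $C_1$ and $C_2$ while avoiding $F'$ --- contradicting that $C_1, C_2$ are distinct components of $X \setminus F'$. You have neither the observation about $F_0' \setminus F$ having $\leq 2$ components (the ``dual'' use of convexity, with the roles of $F_0$ and $F_0'$ swapped), nor the role of the shallow condition in passing from $X \setminus F$ to $X \setminus N_R(F)$, nor the precise mechanism by which $deep(m_0)$ on $F$ forces the contradiction. These are the core of the proof, not a technicality to be filled in.
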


\begin{proof}
We will prove the proposition for $k' = (m_1(0)+m_0(0)+R)$.

We will show that there is some component $C$ of $(X-F')$ such that $F_0 \subseteq (F' \cup C)$.
If $C$ is deep, then $F' \subseteq N_{m_0(0)}(C)$, so $F_0 \subseteq N_{m_0(0)}(C)$, and hence $F \subseteq N_{m_0(0)+R}(C)$.
If $C$ is shallow, then $C \subseteq N_{m_1(0)}(F')$, so $F_0 \subseteq N_{m_1(0)}(F')$.
If $C'$ denotes any deep component of $(X-F')$ then, as $F' \subseteq N_{m_0(0)}(C')$, we have that $F_0 \subseteq N_{m_1(0)+m_0(0)}(C')$ and hence $F \subseteq N_{m_1(0)+m_0(0)+R}(C')$.
Thus in either case it will follow that the proposition holds.

We have that $X$ is CAT(0), hence is uniquely geodesic; for any pair of points $p,q \in X$, we shall denote the geodesic segment connecting $p$ and $q$ by $[p,q]$.  
Note that $F_0$ and $F_0'$ are convex.

Moreover, since $X$ is CAT(0), its distance function is convex, i.e., for any two geodesics $c,c' \co [0,1] \to X$ parameterized proportional to arc length, and for any $t \in [0,1]$,
$$d(c(t), c'(t)) \leq (1-t)d(c(0), c'(0)) + td(c(1), c'(1)).$$
(See Proposition II.2.2 of \cite{BridsonHaefliger}.)
It follows that $F$ and $F'$ are convex, and hence so are $(F' \cap F_0)$ and $(F \cap F_0')$.

Suppose for a contradiction that $F_0$ is not contained in $(F' \cup C)$, for any component $C$ of $(X-F')$.
Then $F_0$ meets two distinct components of $(X-F')$, say $C_1$ and $C_2$.
Thus we have that $(F' \cap F_0)$ separates $F_0$ and is convex in $F_0$.
Note that it follows that $(F' \cap F_0)$ is a uniform neighborhood of a hyperplane in $F_0$.

Let $v_i \in (F_0 \cap C_i)$, for $i=1,2$.
Then there is some $\epsilon > 0$ such that $B_\epsilon (v_i) \subseteq C_i$.
Let $q \in (F' \cap F_0)$, and let $l_i$ denote the geodesic ray in $F_0$ that begins at $q$ and contains $v_i$.
Since $F'$ is convex, note that the subray of $l_i$ that begins at $v_i$ is contained in $C_i$.
Let $r_i=d(v_i, q)$.

Fix $i \in \{ 1,2\}$, and let $w \in l_i$ be such that $d(w,q) = r' > r_i$, let $\epsilon'>0$, and suppose that there is some $p \in (B_{\epsilon'}(w) \cap F')$.
Then the CAT(0) inequality implies that $[p,q]$ meets the $(r_i \epsilon '/r')$--ball about $v_i$.
The convexity of $F'$ implies that $[p,q] \subseteq F'$, hence $\epsilon < (r_i \epsilon'/r')$.
It follows that there is some $w_i \in F_0$ such that the $m_0(R)$--ball about $w_i$ is contained in $C_i$.

On the other hand, we have that $(F \cap F_0')$ is convex in $F_0'$, so in particular $F_0'-(F \cap F_0')$ consists of no more than two components.
Recall that $F$ is 3--separating, thus there is a deep component $X_0$ of $(X-F)$ that does not meet $F_0'$.
Since $F$ satisfies the shallow condition, there is a deep component $X_1$ of $(X-N_{R}(F))$ such that $X_1 \subseteq X_0$.
Since $(X_0 \cap F_0' )= \emptyset$ and $F' = N_R(F_0')$, it follows that $(X_1 \cap F') = \emptyset$.

But the $m_0(R)$--ball about each $w_i$ must meet $X_1$, so $X_1$ is a connected region in $X$ that meets $C_1$ and $C_2$, but not $F'$.
Since $C_1$ and $C_2$ are distinct components of $(X-F')$, this is impossible.
Thus there is some component $C$ of $(X-F')$ such that $F_0 \subseteq (F' \cup C)$.
\end{proof}

Recall the following well-known theorem of Gromov.

\begin{thm}\label{Z^n_thm}
If a finitely generated group is quasi-isometric to $\mathbb{Z}^n$ then it contains $\mathbb{Z}^n$ as a subgroup of finite index.
\end{thm}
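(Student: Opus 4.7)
The plan is to deduce the theorem from Gromov's polynomial growth theorem together with the structure theory of finitely generated virtually nilpotent groups. First, since the growth function of a finitely generated group is a quasi-isometry invariant up to polynomial equivalence, and $\mathbb{Z}^n$ has polynomial growth of degree $n$, any finitely generated group $G$ quasi-isometric to $\mathbb{Z}^n$ has polynomial growth of degree $n$. Gromov's polynomial growth theorem then produces a nilpotent subgroup $N \leq G$ of finite index, and after passing to a further finite-index subgroup we may assume $N$ is torsion-free.

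The second step is to promote ``virtually nilpotent'' to ``virtually abelian.'' By the Bass--Guivarc'h formula, the degree of polynomial growth of $N$ equals
\[ d(N) \;=\; \sum_{i \geq 1} i \cdot r_i, \qquad r_i := \mathrm{rank}_{\mathbb{Z}}\bigl(N^{(i)}/N^{(i+1)}\bigr), \]
where $\{N^{(i)}\}$ denotes the lower central series of $N$, while the Hirsch length of $N$ is $h(N) = \sum_i r_i$. Since Hirsch length coincides with asymptotic dimension for torsion-free virtually nilpotent groups, and asymptotic dimension is a quasi-isometry invariant, $h(N) = \mathrm{asdim}(\mathbb{Z}^n) = n$. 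Combining $\sum_i r_i = n$ with $\sum_i i\, r_i = n$ forces $r_i = 0$ for all $i \geq 2$, so $N$ is abelian.

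A torsion-free finitely generated abelian group of Hirsch length $n$ is isomorphic to $\mathbb{Z}^n$, so $N \cong \mathbb{Z}^n$ is the desired finite-index subgroup of $G$, and the theorem follows.

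The main obstacle in this approach is the quasi-isometry invariance of Hirsch length for torsion-free virtually nilpotent groups; this is the substantive input beyond Gromov's polynomial growth theorem and would typically be cited from the literature (via asymptotic dimension, or alternatively via Pansu's identification of the asymptotic cone of a nilpotent group with a Carnot group whose topological dimension equals the Hirsch length, with abelianness detected by the requirement that this cone be bi-Lipschitz to $\mathbb{R}^n$).
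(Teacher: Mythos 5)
The paper does not actually prove this statement; it simply records it as a ``well-known theorem of Gromov'' and cites no argument, so there is nothing internal to compare your proof against. Your argument is a correct derivation along the standard lines: quasi-isometry invariance of growth gives polynomial growth of degree $n$; Gromov's polynomial growth theorem gives a torsion-free nilpotent finite-index subgroup $N$; the Bass--Guivarc'h formula identifies the growth degree of $N$ as $\sum_{i\ge 1} i\,r_i$; and the quasi-isometry invariance of Hirsch length (your substantive extra input) pins down $\sum_i r_i = n$, forcing $r_i = 0$ for $i\ge 2$ and hence $N$ abelian, torsion-free of rank $n$, i.e.\ $\mathbb{Z}^n$. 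Two points worth tightening: the identification of Hirsch length with asymptotic dimension for finitely generated (torsion-free virtually) nilpotent groups is a nontrivial theorem (Dranishnikov--Smith) that must be cited; the more commonly invoked route, which you also sketch, is Pansu's theorem that the asymptotic cone of $N$ is a Carnot group of topological dimension equal to the Hirsch length, combined with the bi-Lipschitz homeomorphism of asymptotic cones induced by the quasi-isometry to $\mathbb{Z}^n$. Also, the last inference from $r_i=0$ ($i\ge 2$) to $N$ abelian uses that the lower central quotients $\gamma_i(N)/\gamma_{i+1}(N)$ of rank zero are finite, hence $[N,N]$ is finite, hence trivial since $N$ is torsion-free; it is worth saying so explicitly. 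With those citations and that small clarification, the proof is complete.
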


Recall also that two subgroups $H_1$ and $H_2$ of a group $G$ are said to be commensurable if $[H_1:H_1 \cap H_2]$ and $[H_2:H_1 \cap H_2]$ are both finite.

\begin{rmk}\label{comm_implies_fin_H_dist}
For any finitely generated group $G$ with commensurable subgroups $H_1$ and $H_2$, it is straightforward to show that the Hausdorff distance between $H_1$ and $H_2$ in $\mathscr{C}(G)$ is finite.
\end{rmk}

Thus we have the following.

\begin{cor}\label{Z^n_thm2}
If $G$ is a finitely generated group and $H$ is a finitely generated subgroup of $G$ that is quasi-isometric to $\mathbb{Z}^n$ (with respect to its intrinsic metric), then $G$ contains a subgroup $H_0 \cong \mathbb{Z}^n$ such that
$$d_{Haus}(H,H_0)<\infty,$$
where we take $d_{Haus}$ to denote Hausdorff distance in $\mathscr{C}(G)$.
\end{cor}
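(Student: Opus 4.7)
The plan is to apply Gromov's polynomial growth theorem (Theorem \ref{Z^n_thm}) to $H$ and then convert the algebraic conclusion into a Hausdorff-distance statement via Remark \ref{comm_implies_fin_H_dist}.

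First, I would observe that since $H$ is finitely generated and, with respect to its intrinsic word metric, quasi-isometric to $\mathbb{Z}^n$, Theorem \ref{Z^n_thm} applies directly to $H$. This produces a subgroup $H_0 \leq H$ with $H_0 \cong \mathbb{Z}^n$ and $[H:H_0]<\infty$. In particular $H_0$ is a subgroup of $G$, which gives the algebraic part of the conclusion.

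Next, I would argue that $H$ and $H_0$ are commensurable subgroups of $G$. Indeed, $H_0 \cap H = H_0$, so the indices $[H_0 : H_0 \cap H] = 1$ and $[H : H_0 \cap H] = [H:H_0]$ are both finite. Remark \ref{comm_implies_fin_H_dist} then yields $d_{Haus}(H,H_0) < \infty$ in $\mathscr{C}(G)$, which is exactly what is required.

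There is no real obstacle here; the corollary is essentially a packaging of Theorem \ref{Z^n_thm} with the elementary fact that a finite-index subgroup sits at bounded Hausdorff distance in the ambient Cayley graph (one can see this directly by choosing coset representatives $h_1,\dots,h_m$ for $H_0$ in $H$ and noting that $H = H_0\{h_1,\dots,h_m\}$, so $H \subseteq N_r(H_0)$ for $r = \max_i |h_i|_G$, while $H_0 \subseteq H$ trivially). The only mild point to keep in mind is that the quasi-isometry hypothesis refers to the intrinsic metric $d_H$ on $H$, which is precisely the hypothesis needed to invoke Theorem \ref{Z^n_thm}; once this is noted, the rest is immediate.
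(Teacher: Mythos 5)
Your proof is correct and follows exactly the paper's intended route: invoke Theorem \ref{Z^n_thm} on $(H,d_H)$ to get a finite-index $\mathbb{Z}^n$ subgroup $H_0 \leq H$, then use Remark \ref{comm_implies_fin_H_dist} (commensurability implies finite Hausdorff distance in $\mathscr{C}(G)$) to conclude. The only minor slip is a naming one --- Theorem \ref{Z^n_thm} as stated is the quasi-isometric rigidity of $\mathbb{Z}^n$ rather than the polynomial growth theorem itself --- but you cite the correct result and the argument is the paper's.
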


Now we can give our partial converse to the Flat Torus Theorem.

\begin{thm}\label{flat_torus_converse}
Let $X$ be a CAT(0) space, and let $G$ be a finitely generated group acting geometrically on $X$.
Suppose that $X$ contains an isometrically embedded copy, $F_0$, of $\mathbb{E}^n$ that has a uniform neighborhood that satisfies the deep, shallow and 3--separating conditions.

Then $G$ contains a subgroup $H \cong \mathbb{Z}^n$, such that for any $x_0 \in X$, $d_{Haus}(Hx_0, F_0)<\infty$.
\end{thm}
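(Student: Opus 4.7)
The plan is to pull the flat $F_0$ back through the orbit quasi-isometry to a subgraph $Y$ of $\mathscr{C}(G)$, to check by coarse-geometric means that $Y$ meets the four hypotheses of Theorem \ref{subgroup_thm}, and then to identify the resulting subgroup as $\mathbb{Z}^n$ using Gromov's polynomial growth theorem.

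First I would fix $x_0 \in X$ and take $\phi \co \mathscr{C}(G) \to X$ to be the $G$-equivariant quasi-isometry $g \mapsto gx_0$ (with edges sent along geodesics) supplied by the \v{S}varc--Milnor lemma. Let $F = N_R(F_0)$ be the hypothesized uniform neighborhood, and let $Y \subseteq \mathscr{C}(G)$ be the subgraph spanned by those vertices $g \in G$ with $gx_0 \in F$, together with enough edges from a slightly enlarged uniform neighborhood to render $Y$ connected (possible since $F$ is connected and $\phi$ is coarsely onto). Then $d_{Haus}(\phi(Y), F) < \infty$, so Lemmas \ref{shallow_qi_inv}, \ref{deep_qi_inv} and \ref{n--sep_qi_inv} transfer the deep, shallow and 3-separating conditions from $F$ across to $Y$.

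Next I would verify the noncrossing condition by combining Proposition \ref{cat0_prop} with the $G$-equivariance of $\phi$. For any $g \in G$, the translate $gF = N_R(gF_0)$ is a uniform neighborhood of another isometric copy of $\mathbb{E}^n$, and because $g$ acts by isometries on $X$ it satisfies the same $deep(m_0)$ and $shallow(m_1)$ bounds as $F$ itself. Proposition \ref{cat0_prop}, with the 3-separating role played by $F$, therefore supplies a constant $k' = k'(m_0, m_1, R)$ independent of $g$ such that $gF$ lies in the $k'$-neighborhood of some deep component of $X \setminus F$. Matching deep complementary components across $\phi$ via the bookkeeping from the proofs of Lemmas \ref{shallow_qi_inv} and \ref{deep_qi_inv}, this inclusion pulls back to a uniform $k$ for which every $gY$ lies in the $k$-neighborhood of some deep component of $\mathscr{C}(G) \setminus Y$. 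Theorem \ref{subgroup_thm} then yields a subgroup $H \leq G$ with $d_{Haus}(Y, H) < \infty$ in $\mathscr{C}(G)$.

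Finally I would identify $H$ with $\mathbb{Z}^n$. Since $Y$ is connected, Corollary \ref{Y_conn_implies_H_fg} gives that $H$ is finitely generated; then Remark \ref{Y_ci_H} (via Lemma \ref{inclusion_is_wup}), together with composition by $\phi$ and nearest-point projections, realizes $(H, d_H)$ as coarsely isometric to $F_0 \cong \mathbb{E}^n$. Both spaces are geodesic after passing to $\mathscr{C}(H)$, so Proposition \ref{ci_is_qi} upgrades this coarse isometry to a genuine quasi-isometry, and Theorem \ref{Z^n_thm} produces a finite-index subgroup $H_0 \cong \mathbb{Z}^n$ of $H$. Remark \ref{comm_implies_fin_H_dist} and Corollary \ref{Z^n_thm2} give $d_{Haus}(H, H_0) < \infty$ in $\mathscr{C}(G)$, and applying $\phi$ then yields $d_{Haus}(H_0 x_0, F_0) < \infty$; any other basepoint differs from $x_0$ by a uniformly bounded amount, so the same holds for all $x_0 \in X$. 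The main obstacle, I expect, is the noncrossing step: one must carefully use the quasi-isometry constants of $\phi$, rather than the geometry of each individual translate, to obtain a single $k$ uniform in $g$, and to match deep components in $X$ with deep components in $\mathscr{C}(G)$ up to an error independent of $g$.
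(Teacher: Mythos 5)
Your proposal matches the paper's proof in all essential respects: you pull the deep, shallow, and 3-separating conditions back to $\mathscr{C}(G)$ through the orbit quasi-isometry via Lemmas \ref{shallow_qi_inv}, \ref{deep_qi_inv}, \ref{n--sep_qi_inv}, derive noncrossing for the flat's uniform neighborhood from Proposition \ref{cat0_prop} (with the constant $k'$ uniform in $g$ because translates inherit $deep(m_0)$ and $shallow(m_1)$), transport it across via the near-$G$-equivariance of $\phi$, and finish with Theorem \ref{subgroup_thm}, Proposition \ref{ci_is_qi}, and Corollary \ref{Z^n_thm2}. The one slight imprecision is that in Proposition \ref{cat0_prop} the role of the 3-separating set placed in a complementary neighborhood should be played by the translate $g N_r(F)$ rather than by $F$ itself; this is harmless since translates inherit the 3-separating condition, but it is the translate that must satisfy it for the proposition to deliver the containment you need.
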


\begin{proof}
Fix $x_0 \in X$, let $\phi \co \mathscr{C}(G) \to X$ be a quasi-isometry that takes each $g \in G$ to $gx_0$, and let $\phi'$ be a quasi-inverse to $\phi$.
Let $F$ be a connected uniform neighborhood of $F_0$ that satisfies the deep, shallow and 3--separating conditions.
By Lemmas \ref{shallow_qi_inv}, \ref{n--sep_qi_inv} and \ref{deep_qi_inv}, there is some connected uniform neighborhood $Y$ of $\phi'(F)$ that satisfies the deep, shallow and 3--separating conditions.
$F$ is quasi-isometric to $\mathbb{R}^n$, so $Y$ is quasi-isometric to $\mathbb{Z}^n$.

Let $r>0$ be such that $\phi(Y) \subseteq N_r(F)$.
As we saw in Lemma \ref{shallow_qi_inv}, we can enlarge $r$ if necessary so that each component of the complement of $N_r(F)$ meets the image under $\phi$ of no more than one component of $(\mathscr{C}(G)-Y)$.
Proposition \ref{cat0_prop} implies that $N_r(F)$ satisfies the noncrossing condition.
It follows that there is some $k_0=k_0(r)>0$ such that for all $g \in G$, $g\phi(Y)$ is contained in the $k_0$--neighborhood of a deep component of $(X-N_r(F))$.

As $d_{Haus}(g\phi(Y), \phi(gY))$ is bounded by a function of the parameters of $\phi$ (and is independent of $g$), there is some constant $k$, depending only on $k_0(r)$ and the parameters of $\phi$, such that for all $g \in G$, $gY$ is contained in the $k$--neighborhood of a deep component of the complement of $Y$.
Hence $Y$ satisfies $noncrossing(k)$.
It follows from Theorem \ref{subgroup_thm} and Corollary \ref{Y_conn_implies_H_fg} that $G$ contains some finitely generated subgroup $H$ such that $d_{Haus}(Y,H)<\infty$, and hence $d_{Haus}(F,Hx_0)<\infty$.

Note that by Proposition \ref{ci_is_qi}, $H$ is quasi-isometric to $F_0$, and by Corollary \ref{Z^n_thm2}, we can assume that $H \cong \mathbb{Z}^n$.
\end{proof}

We end this section by mentioning that we expect the noncrossing condition to hold in far more general settings.  
Specifically, we expect the following.

\begin{conj}\label{noncrossing_conj}
Let $G$ be a finitely generated group, and let $Y$ be a 3--separating connected subset of $\mathscr{C}(G)$.
If for all subsets $Y' \subseteq Y$ with $d_{Haus}(Y', Y)=\infty$, $Y'$ is not 2--separating, and $G$ is of type $F_{n}$ for sufficiently large $n \in (\mathbb{N} \cup \{ \infty\})$ depending on the geometry of $Y$, then $Y$ satisfies the noncrossing condition.
\end{conj}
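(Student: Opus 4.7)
The plan is to argue by contradiction, adapting Papasoglu's argument for quasi-lines in \cite{Papa_qlines} to higher intrinsic dimension. Assume the noncrossing condition fails. Then for every $k$, there exists $g_k \in G$ with $g_kY$ not contained in the $k$--neighborhood of any single deep component of $\mathscr{C}(G)-Y$. Since $Y$ is connected, so is each translate $g_kY$, and therefore $g_kY$ carries a path joining points deep inside two distinct deep components of the complement of $Y$. Translating by $g_k^{-1}$ and passing to a subsequence, I would replace this by a configuration in which $Y$ is crossed by the translate $Y_k := g_k^{-1}Y$, with the crossing regions migrating to infinity along $Y$.

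The next step is to build from these crossings a ``trace'' $A_k \subseteq Y$, defined as the subset of points of $Y$ lying within some fixed (independent of $k$) distance of $Y_k$ in the crossing regions. Using that $Y$ and all of its translates satisfy the deep, shallow and 3--separating conditions with uniform parameters (by $G$--equivariance and the lemmas of Section \ref{terminology_section}), I would check that each $A_k$ is 2--separating in $\mathscr{C}(G)$: the passage of $Y_k$ from one side of $Y$ to another forces the ambient complement of $A_k$ to decompose into at least two deep components. A diagonal/pigeonhole argument using local finiteness of $\mathscr{C}(G)$ should then extract a limit $A_\infty \subseteq Y$ which remains 2--separating and, because the crossings migrate to infinity, satisfies $d_{Haus}(A_\infty, Y) = \infty$, contradicting the hypothesis.

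The role of the type $F_n$ assumption is to guarantee that 2--separation of $A_k$ in the ambient Cayley graph is actually detectable from the crossings. For quasi-lines, where the intrinsic dimension of $Y$ is one, finite presentation (type $F_2$) suffices because the obstruction to merging two deep components is a loop that can be filled by a van Kampen disc, as in \cite{Papa_qlines}. For $Y$ of higher intrinsic dimension, the obstruction to merging deep components is a higher-dimensional sphere, which requires corresponding higher finiteness of $G$ to fill. I expect the right framework is a coarse Alexander-duality argument carried out on a $G$--CW model afforded by the type $F_n$ hypothesis, translating higher asymptotic connectivity of $G$ into the requisite ambient separation statements.

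The main obstacle will be making the notion of ``intrinsic dimension of $Y$'' precise in a purely coarse-geometric way, so that the correct threshold $n$ can be identified and the duality step carried out. A secondary, but substantial, difficulty is to ensure that the limit $A_\infty$ really does satisfy $d_{Haus}(A_\infty, Y) = \infty$ rather than accumulating in a bounded subregion of $Y$: this requires a careful choice of the $g_k$ (for instance, by pre-translating within $Y$ using the hypothesis that $Y$ itself cannot be thinned to a 2--separating subset) and a compactness argument respecting both the intrinsic structure of $Y$ and the ambient geometry. I would expect a successful proof to proceed by first establishing the result for $Y$ quasi-isometric to $\mathbb{R}^n$ in groups of type $F_{n+1}$, recovering Proposition \ref{cat0_prop} as a special case, and only then attempting the general statement.
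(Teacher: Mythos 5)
This statement is a conjecture in the paper (Conjecture~\ref{noncrossing_conj}); the author explicitly labels it as such and offers no proof, so there is nothing to compare your attempt against. Your write-up itself acknowledges that it is a strategy sketch rather than an argument, and that acknowledgment is accurate: the two difficulties you flag --- pinning down a coarse-geometric notion of ``intrinsic dimension'' of $Y$ to set the threshold $n$ and to run the duality step, and controlling the limit $A_\infty$ so that $d_{Haus}(A_\infty, Y) = \infty$ --- are genuine gaps, not technicalities, and closing either would require a substantive new idea. Your reading of how the hypotheses ought to function is sound: the non-thinnability hypothesis on $Y$ is exactly what a contradiction argument must exploit, and the type $F_n$ hypothesis is the natural candidate for guaranteeing that ``crossings'' force coarse separation, in analogy with the van Kampen filling in Papasoglu's quasi-line argument. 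One caution, though: Proposition~\ref{cat0_prop}, which is the paper's only concrete instance of the noncrossing phenomenon beyond quasi-lines, does not proceed by a filling or duality argument at all --- it uses the convexity of flats and of uniform neighborhoods in a CAT(0) space, together with the 3--separating hypothesis, to directly exhibit a component of the complement containing $F_0$. So while testing your plan on $Y$ quasi-isometric to $\mathbb{R}^m$ is a reasonable first step, proving that special case would not simply recover Proposition~\ref{cat0_prop} as a corollary, and the CAT(0) proof gives no direct guidance on how the filling argument should go in the absence of convexity.
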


The reader should note that results in this direction could be very interesting when combined with the results from this paper.
When combined with Theorem \ref{subgroup_thm}, such results could yield a subgroup detection theorem that depends only on coarse geometry.
If combined with Theorem \ref{main_thm}, such a result could give the quasi-isometry invariance of certain types of subgroups.

\section{Some coarse isometry invariants}\label{algebraic_invariants}

We showed in Theorem \ref{subgroup_thm} that certain properties of a subset $Y$ of a Cayley graph $\mathscr{C}(G)$ imply that there is some subgroup $H$ of $G$ such that $d_{Haus}(Y,H)<\infty$.
In Remark \ref{Y_ci_H}, we noted that $H$ is finitely generated if $Y$ is connected, and in this case $Y$ is coarsely isometric to $(H, d_H)$.
In this section, we will consider a couple of basic invariants of coarse isometries, in order to see that the coarse geometry of $Y$ determines aspects of the algebraic structure of $H$.
The first such invariant we will consider is coarse $n$--connectedness.

If $(X,d)$ is a discrete metric space and $\epsilon \geq 0$, then we use $Rips_\epsilon (X)$ to denote the $\epsilon$--Rips complex of $X$.
Thus $Rips_\epsilon (X)$ is the simplicial complex with vertex set equal to $X$, and such that any finite subset $X_0$ of $X$ spans a simplex if and only if, for all $x_1, x_2 \in X_0$, $d(x_1, x_2) \leq \epsilon$.

The following is Definition 2.10 of \cite{Kap_ggt_book}.
\begin{defn}
A discrete metric space $X$ is said to be {\em coarsely $n$--connected} if, for each $r \geq 0$, there is some $R \geq r$ such that the natural simplicial map $Rips_r(X) \to Rips_R(X)$ induces the trivial map on $i^{th}$ homotopy groups, for every $0 \leq i \leq n$.
\end{defn}

Note that in the case that $X$ is a discrete subset of a Cayley graph, or more generally of a geodesic space, this definition of coarse 0--connectedness agrees with that given in Definition \ref{0-conn_defn}.

The next theorem appears in \cite{Vav_qlines}, and is based on Corollary 2.15 of \cite{Kap_ggt_book}.

\begin{thm}\label{fn_ci_inv}{\rm {\bf \cite{Vav_qlines}}} 
Coarse $n$--connectedness is a coarse isometry invariant.
\end{thm}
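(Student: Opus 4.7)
The approach is to transfer coarse $n$--connectedness from $X$ to $Y$ through the coarse isometry $f$ and a coarse inverse $f'$, by interpreting them as simplicial maps between Rips complexes at suitably chosen scales.

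The first step is to verify that any $(\phi, \Phi)$--uniformly distorting map $g \co X \to Y$ induces a simplicial map $g_* \co \mathrm{Rips}_r(X) \to \mathrm{Rips}_{\Phi(r)}(Y)$ for every $r \geq 0$: if $\{x_0, \ldots, x_k\}$ spans a simplex of $\mathrm{Rips}_r(X)$, then all pairwise distances are at most $r$, hence their images under $g$ have pairwise distances at most $\Phi(r)$, so they span a simplex of $\mathrm{Rips}_{\Phi(r)}(Y)$. Apply this to both $f$ and a coarse inverse $f'$ with respective distortion controls $\Phi$, $\Phi'$.

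Now suppose $X$ is coarsely $n$--connected, and fix $s \geq 0$. Using $f'$, we get $f'_* \co \mathrm{Rips}_s(Y) \to \mathrm{Rips}_{\Phi'(s)}(X)$. By coarse $n$--connectedness of $X$ applied at scale $\Phi'(s)$, choose $R \geq \Phi'(s)$ so that the inclusion $\iota \co \mathrm{Rips}_{\Phi'(s)}(X) \to \mathrm{Rips}_R(X)$ is trivial on $\pi_i$ for all $0 \leq i \leq n$. Then $f_* \co \mathrm{Rips}_R(X) \to \mathrm{Rips}_{\Phi(R)}(Y)$ composes to give a map
$$F := f_* \circ \iota \circ f'_* \co \mathrm{Rips}_s(Y) \to \mathrm{Rips}_{\Phi(R)}(Y).$$
Set $D = d(\mathrm{id}_Y, ff')$, which is finite since $f'$ is a coarse inverse, and choose $S \geq \max\{\Phi(R),\, s + 2D\}$, so that both $F$ and the inclusion $j \co \mathrm{Rips}_s(Y) \to \mathrm{Rips}_S(Y)$ land in $\mathrm{Rips}_S(Y)$.

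The main step is to show that $F$ (viewed in $\mathrm{Rips}_S(Y)$) is simplicially contiguous to $j$. For any simplex $\sigma = \{y_0, \ldots, y_k\}$ of $\mathrm{Rips}_s(Y)$, any two vertices of $\sigma$ are within distance $s$, any two vertices of $F(\sigma) = \{ff'(y_0), \ldots, ff'(y_k)\}$ are within distance $s + 2D$ (by the triangle inequality together with $d(y_i, ff'(y_i)) \leq D$), and each $y_i$ is within distance $s + D$ of each $ff'(y_j)$. Since $S \geq s + 2D$, the full set $\sigma \cup F(\sigma)$ spans a simplex of $\mathrm{Rips}_S(Y)$, which is exactly the contiguity condition. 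Contiguous simplicial maps are homotopic, so $j \simeq F$ as maps into $\mathrm{Rips}_S(Y)$. Since $F$ factors through $\iota$, which kills $\pi_i$ for $0 \leq i \leq n$, so does $j$. This exhibits $Y$ as coarsely $n$--connected.

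The principal obstacle is purely bookkeeping: carefully choosing $S$ to dominate both the expansion scale $\Phi(R)$ and the contiguity scale $s + 2D$, and noticing that the composition of a uniformly distorting map with its coarse inverse is a constant-bounded perturbation of the identity at the vertex level. Everything else reduces to the standard fact that bounded-perturbation simplicial maps between Rips complexes are contiguous (hence homotopic) once the target scale is large enough.
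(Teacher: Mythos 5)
Your proof is correct, and it is exactly the standard Rips-complex argument that the cited references (Vavrichek's earlier paper and Corollary 2.15 of Kapovich's notes, on which the present paper bases this statement) employ. The key steps --- inducing simplicial maps between Rips complexes from the uniformly distorting maps $f$ and $f'$, factoring the composite $f_* \circ \iota \circ f'_*$ through the homotopically trivial inclusion $\iota$ coming from coarse $n$--connectedness of $X$, and then using contiguity to show this composite is homotopic to the inclusion $\mathrm{Rips}_s(Y) \hookrightarrow \mathrm{Rips}_S(Y)$ once $S$ dominates both the expansion scale $\Phi(R)$ and the contiguity scale $s+2D$ --- are precisely the expected ones, and the bookkeeping of constants is handled correctly.
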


The proof of Theorem 2.21 of \cite{Kap_ggt_book} shows that each coarsely $n$--connected group is of type $F_{n+1}$, which gives the next result.

\begin{cor}
Let $G,Y$ and $H$ be as in Theorem \ref{subgroup_thm}, assume that $H$ is finitely generated, and let $n>0$.
Then $Y$ is coarsely $n$--connected if and only if $H$ is of type $F_{n+1}$.
\end{cor}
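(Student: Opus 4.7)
The plan is to transfer coarse $n$--connectedness back and forth between $Y$ and $(H,d_H)$ via a composition of coarse isometries, and then to invoke the standard characterization of type $F_{n+1}$ groups in terms of coarse connectedness of their Cayley graphs. This is almost entirely a matter of assembling results that have already been set up in the preceding sections.

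First, by Theorem \ref{subgroup_thm} we have $d_{Haus}(Y,H)<\infty$, so a nearest point projection $\pi \co Y \to H$ (viewing $H$ as a subset of $\mathscr{C}(G)$ with the ambient metric $d_G$) is a coarse isometry, as was observed in Section \ref{ud_section}. Since $H$ is assumed to be finitely generated, Lemma \ref{inclusion_is_wup} says that the inclusion $i_H \co (H,d_H) \to (H,d_G)$ is also a coarse isometry. Composing $\pi$ with a coarse inverse to $i_H$, guaranteed by Lemma \ref{coarse_inverses}, yields a coarse isometry $Y \to (H,d_H)$.

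Next, I would apply Theorem \ref{fn_ci_inv}, which asserts that coarse $n$--connectedness is a coarse isometry invariant. Combined with the previous paragraph, this gives that $Y$ is coarsely $n$--connected if and only if $(H,d_H)$ is coarsely $n$--connected.

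Finally, I would close the argument using the standard equivalence, essentially Theorem 2.21 of \cite{Kap_ggt_book}, that a finitely generated group is of type $F_{n+1}$ if and only if it is coarsely $n$--connected with respect to the word metric. The ``only if'' direction is what the text explicitly extracts just before the corollary; the ``if'' direction is the content of Theorem 2.21 itself, and asserting the biconditional will be the only real step that is not purely formal. Putting these pieces together gives the corollary. The only potential subtlety is to make sure that the coarse $n$--connectedness of the vertex set of $(H,d_H)$ in the sense of Rips complexes agrees with the finiteness condition on a $K(H,1)$; this is handled in \cite{Kap_ggt_book}, so the step is routine rather than genuinely difficult.
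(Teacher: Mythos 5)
Your proposal is correct and matches the paper's (implicit) argument: the coarse isometry between $Y$ and $(H,d_H)$ is exactly what Remark \ref{Y_ci_H} sets up via Lemma \ref{inclusion_is_wup}, and the paper then invokes Theorem \ref{fn_ci_inv} together with the characterization from \cite{Kap_ggt_book} just as you do. Your composition of the nearest-point projection with a coarse inverse of $i_H$ is a clean way to make the coarse isometry $Y \to (H,d_H)$ explicit, and you correctly flag that the biconditional with type $F_{n+1}$ is the one nontrivial input from \cite{Kap_ggt_book}.
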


Next, we will use Gromov's theorem about groups with polynomial growth to see that, assuming $H$ is finitely generated, $H$ has a nilpotent subgroup of finite index if $Y$ ``coarsely'' has slow growth in $\mathscr{C}(G)$.
For the remainder of this section, if $(X,d)$ is a metric space, $x \in X$ and $n > 0$, then let $B_n(x,(X,d))$ denote the $n$--ball about $x$ in $(X,d)$, and let $\overline{B}_{n}(x,(X,d))$ denote the closure of that ball.

Recall that, if $(X,d)$ is a discrete metric space and $x \in X$, then the growth function of $X$ with respect to the basepoint $x$ is defined by $\beta(x, (X,d))(n) := \# \overline{B}_n(x,(X,d))$.
If $H$ is a group with generating set $S$, giving rise to the metric we denote by $d_H$, then $(H, d_H)$ has a growth function that is independent of basepoints, and which we denote by $\beta_H(n)$.

Following \cite{delaHarpe_book}, we say that a function $\beta \co \mathbb{R}_+ \to \mathbb{R}_+$ is weakly dominated by another function $\beta' \co \mathbb{R}_+ \to \mathbb{R}_+$, denoted $\beta \stackrel{w}{\prec} \beta'$, if there are constants $\Lambda \geq 1$ and $C \geq 0$ such that 
$$\beta(n) \leq \Lambda \beta'(\Lambda n + C) + C$$
for all $n>0$.
Note that $\stackrel{w}{\prec}$ is a transitive relation.
We will say that a discrete metric space $(X,d)$ has polynomial growth if there is some $a \geq 0$ such that $\beta(x, (X,d))(n)$ is weakly dominated by the function $n \mapsto n^a$.

\begin{lem}\label{weakly_dominated}
Let $H$ be a finitely generated subgroup of a finitely generated group $G$, and let $Y \subseteq \mathscr{C}(G)$ be such that $d_{Haus}(Y,H)<\infty$.
If $\pi \co \mathscr{C}(G) \to G$ denotes a nearest point projection map, then $\beta_H \stackrel{w}{\prec} \beta(x,(\pi(Y), d_G))$ for any $x \in \pi(Y)$.
\end{lem}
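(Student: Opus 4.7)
The plan is to exhibit an at-most-$K$-to-one map from $\overline{B}_n(e, (H, d_H))$ into $\overline{B}_{Ln+R}(x, (\pi(Y), d_G))$ for appropriate constants $K$, $L$, $R$, from which the desired weak domination $\beta_H(n) \leq K \cdot \beta(x, (\pi(Y), d_G))(Ln + R)$ follows immediately. Two ingredients will feed into this: the linear upper bound from Lemma \ref{inclusion_is_wup} comparing the intrinsic and ambient metrics on $H$, and the combination of the bounded Hausdorff distance between $H$ and $\pi(Y)$ with local finiteness of $\mathscr{C}(G)$.

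For the first ingredient, I would use that each generator of $H$ can be written as a word of bounded length in the generators of $G$, yielding a constant $L \geq 1$ with $d_G(h_1, h_2) \leq L\, d_H(h_1, h_2)$ for all $h_1, h_2 \in H$; equivalently, this is the linear upper bound $\Phi(r) \leq Lr$ from Lemma \ref{inclusion_is_wup}. For the second, since $\pi$ is a nearest-point projection onto the vertex set of $\mathscr{C}(G)$ and $d_{Haus}(Y, H) < \infty$, I would fix a constant $D'$ with $d_{Haus}(\pi(Y), H) \leq D'$, and pick any map $f \co H \to \pi(Y)$ with $d_G(h, f(h)) \leq D'$ for every $h \in H$. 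Local finiteness of $\mathscr{C}(G)$ then gives a bound $K$ on the number of vertices in any ball of radius $D'$, so each fiber $f^{-1}(y) \subseteq \overline{B}_{D'}(y, (G, d_G))$ has at most $K$ elements.

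To combine these, I would set $R = d_G(x, e) + D'$ and verify using the triangle inequality that for any $h \in \overline{B}_n(e, (H, d_H))$,
\[ d_G(x, f(h)) \leq d_G(x, e) + d_G(e, h) + d_G(h, f(h)) \leq d_G(x, e) + Ln + D' = Ln + R, \]
so $f$ restricts to a map $\overline{B}_n(e, (H, d_H)) \to \overline{B}_{Ln + R}(x, (\pi(Y), d_G))$ of multiplicity at most $K$. Hence $\beta_H(n) \leq K \cdot \beta(x, (\pi(Y), d_G))(Ln + R)$, which yields $\beta_H \stackrel{w}{\prec} \beta(x, (\pi(Y), d_G))$ upon taking $\Lambda = \max\{K, L\}$ and $C = R$. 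I do not foresee a significant obstacle here; the argument is essentially careful bookkeeping, with the only mild subtlety being the use of the projection $f$ to transfer the estimate from $H$ with its ambient metric to the subset $\pi(Y)$.
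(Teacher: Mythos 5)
Your proof is correct and follows essentially the same strategy as the paper's: compose the linear upper bound on the inclusion $(H,d_H)\hookrightarrow (H,d_G)$ from Lemma \ref{inclusion_is_wup} with a bounded-displacement map to $\pi(Y)$, then bound the multiplicity. The one point of divergence is how you bound the multiplicity: the paper uses the lower distortion function $\phi$ from the uniformly distorting inclusion (observing that $\zeta(h)=\zeta(h')$ forces $d_H(h,h')<r_0$ and hence multiplicity at most $\beta_H(r_0)$), whereas you appeal directly to local finiteness of $\mathscr{C}(G)$ to bound the size of any $D'$-ball. Both are valid, and yours is marginally more self-contained since it does not require $\phi$ at all.
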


\begin{proof}
Let $L>0$ be such that the identity map $i_H \co (H, d_H) \to (H, d_G)$ is $(\phi, \Phi)$--uniformly distorting, with $\Phi(r) \leq Lr$ for all $r$, as in Lemma \ref{inclusion_is_wup}.
Let $\rho \co H \to Y$ be a nearest point projection map, so $\rho$ and the projection map $\pi$ move any given point a distance of no more than $d_{Haus}(Y,H)$ and $\frac{1}{2}$ respectively, and let $\zeta=\pi \rho i_H $, so $\zeta \co (H,d_H) \to \pi(Y) \subseteq G$.

For any $h,h' \in H$, note that 
$$d_H(h,h') \leq r \Rightarrow d_G(h, h') \leq Lr $$
$$\Rightarrow d(\zeta(h), \zeta(h')) \leq Lr + 2d_{Haus}(Y,H) + 1.$$
It follows that, for any $h \in H$, 
$$\zeta(\overline{B}_{r}(h,(H, d_H))) \subseteq \overline{B}_{ Lr + 2d_{Haus}(Y,H) + 1}(\zeta(h), (\pi(Y),d_G)).$$

On the other hand, note that for any $h,h' \in H$, 
$$d_G(\zeta(h), \zeta(h')) = 0 \Rightarrow d_G(h,h') \leq  2d_{Haus}(Y,H) + 1,$$
and hence $d_H(h,h')<r_0$ for any $r_0$ such that $\phi (r_0) >  2d_{Haus}(Y,H) + 1$.
Thus $\zeta $ maps no more than $\beta_{H}(r_0)$ elements of $H$ to any given point of $\pi (Y)$.
It follows that, for any $r \geq 0$, and any point $x \in \pi(Y)$,
$$\beta_{H}(r) \leq \beta_{H}(r_0)\beta( x,(\pi(Y),d_G))(Lr + 2d_{Haus}(Y,H) + 1).$$
Hence $\beta_H \stackrel{w}{\prec} \beta(x,(\pi(Y), d_G))$ for any $x \in \pi(Y)$.
\end{proof}

In particular, it follows that if $(\pi(Y),d_G)$ has polynomial growth, then so does $H$.
We recall Gromov's famous theorem about groups with polynomial growth:

\begin{thm}\label{gromov's_thm}
{\em {\bf \cite{Gromov_poly_growth}}}
Let $H$ be a finitely generated group.
Then $(H,d_H)$ has polynomial growth if and only if $H$ has a nilpotent subgroup of finite index.
\end{thm}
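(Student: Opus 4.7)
The statement has two directions, of vastly different difficulty. The easy direction --- that a virtually nilpotent group has polynomial growth --- I would handle first. Passing to a finite index subgroup changes $\beta_H$ only up to weak domination, so it suffices to prove polynomial growth for a finitely generated nilpotent group $N$. For such $N$ with lower central series $N = \Gamma_1 \supseteq \Gamma_2 \supseteq \cdots \supseteq \Gamma_{c+1} = \{e\}$, I would induct on the nilpotency class $c$. The key input is that elements of $N$ admit a Mal'cev normal form $\gamma_1^{a_1}\cdots \gamma_r^{a_r}$, and commutator identities force $|a_i| \leq C_0 n^{j}$ whenever $\gamma_i$ lies in $\Gamma_j \setminus \Gamma_{j+1}$ and the word length is at most $n$. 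Counting such normal forms yields the Bass--Guivarc'h bound $\beta_N(n) \leq C_1 n^d$ with $d = \sum_{j=1}^{c} j \cdot r_j$, where $r_j$ is the torsion-free rank of $\Gamma_j/\Gamma_{j+1}$.

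For the hard direction, suppose $(H,d_H)$ has polynomial growth of degree $d$. My plan, following Gromov, is to exploit self-similarity by rescaling. Consider the pointed metric spaces $(H, \frac{1}{n} d_H, e)$. Polynomial growth bounds the covering numbers of balls uniformly in $n$, so Gromov's compactness theorem for pointed metric spaces produces a subsequence converging in the pointed Gromov--Hausdorff topology to a complete, locally compact, connected metric space $(X, d_\infty, x_0)$ of finite Hausdorff dimension, which is homogeneous because left multiplications by elements of $H$ induce asymptotic isometries in the limit.

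The next step is to invoke the Montgomery--Zippin theorem (Hilbert's fifth problem for locally compact transformation groups) to conclude that $G := Isom(X)$ is a Lie group with finitely many components. Extracting a genuine homomorphism $\rho \co H_0 \to G$ for some finite index subgroup $H_0 \leq H$, whose image has positive-dimensional closure, requires promoting the asymptotic isometric action to an actual action, which is delicate. Once this is done, the Milnor--Wolf theorem (no free subsemigroups in polynomial-growth groups) together with Tits' alternative forces the image to be virtually solvable, and a standard argument then produces a nontrivial homomorphism from a finite index subgroup $H' \leq H$ onto $\mathbb{Z}$. The kernel has strictly smaller polynomial growth degree, so induction on $d$ closes the argument.

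The main obstacle is the Montgomery--Zippin step coupled with the extraction of an honest homomorphism from purely asymptotic isometry data: one must verify that the limit $X$ is nontrivial (nondegenerate, of positive dimension, with genuinely transitive isometry group) from the bare hypothesis of polynomial growth, and this is where most of the analytic and topological work concentrates. An attractive alternative I would also consider is Kleiner's route: rather than rescaling and taking limits, one shows directly that the space of harmonic functions on $\mathscr{C}(H)$ of polynomial growth degree at most $d$ is finite-dimensional, producing a finite-dimensional $H$--representation on which the induction can be run, and bypassing the heaviest topological transformation group machinery.
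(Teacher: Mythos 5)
The paper does not prove this theorem: it is cited directly as Gromov's polynomial growth theorem (\cite{Gromov_poly_growth}) and used as a black box in Corollary~\ref{v_nilp}, so there is no internal argument to compare yours against. Your sketch is a faithful recollection of the two standard routes: the easy direction via Mal'cev normal forms and the Bass--Guivarc'h degree formula, and the hard direction via rescaled pointed Gromov--Hausdorff limits, Montgomery--Zippin, the Tits alternative plus Milnor--Wolf, and induction on the growth degree, with Kleiner's harmonic-function approach correctly flagged as the alternative that avoids the heaviest transformation-group machinery. Within the scope of this paper, nothing further is required; for an actual write-up of the theorem itself, the genuinely hard content sits exactly where you located it --- showing the limit space is nondegenerate, connected, finite-dimensional, and has transitive isometry group, and upgrading asymptotic isometries of the rescaled spaces to an honest homomorphism to a Lie group --- none of which is filled in here, but none of which this paper attempts either.
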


Thus we have the following.

\begin{cor}\label{v_nilp}
Let $G,Y$ and $H$ be as in Theorem \ref{subgroup_thm}, assume that $H$ is finitely generated, and let $\pi \co \mathscr{C}(G) \to G$ be a nearest point projection map.
If $(\pi(Y), d_G)$ has polynomial growth, then $H$ has a nilpotent subgroup of finite index.
\end{cor}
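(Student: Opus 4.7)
The plan is to chain together the two results immediately preceding the corollary, with essentially no extra work needed beyond checking that polynomial growth transfers correctly under the relation $\stackrel{w}{\prec}$.

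First I would invoke Lemma \ref{weakly_dominated} to obtain
$$\beta_H \stackrel{w}{\prec} \beta(x,(\pi(Y),d_G))$$
for any chosen $x \in \pi(Y)$. Since by hypothesis $(\pi(Y), d_G)$ has polynomial growth, there is some $a \geq 0$ such that $\beta(x,(\pi(Y),d_G))(n)$ is weakly dominated by $n \mapsto n^a$. Using the transitivity of $\stackrel{w}{\prec}$ noted in the paragraph preceding Lemma \ref{weakly_dominated}, I conclude that $\beta_H(n)$ is itself weakly dominated by $n \mapsto n^a$, i.e.\ $(H,d_H)$ has polynomial growth.

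Second, I would apply Gromov's Theorem \ref{gromov's_thm} directly to $(H,d_H)$. Since $H$ is assumed finitely generated and has polynomial growth, the theorem immediately gives that $H$ contains a nilpotent subgroup of finite index, which is the desired conclusion.

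There is no real obstacle here: Lemma \ref{weakly_dominated} has already absorbed all the geometric subtleties (in particular, the fact that $H$ may be badly distorted in $G$, so that $\pi(Y)$ with the induced metric $d_G$ need not have the same growth type as $(H,d_H)$ on the nose — only a one-sided weak domination is available, but that is precisely what Gromov's theorem requires). The only thing worth remarking is that the implication is one-directional: polynomial growth of $(\pi(Y),d_G)$ forces polynomial growth of $H$, but distortion means the converse is not automatic, which is why the corollary is stated only in this direction.
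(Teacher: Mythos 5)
Your proposal is correct and matches the paper's intended (implicit) argument exactly: the paper remarks immediately after Lemma \ref{weakly_dominated} that polynomial growth of $(\pi(Y),d_G)$ transfers to $H$ via weak domination, and then invokes Theorem \ref{gromov's_thm}. Your added remarks on transitivity of $\stackrel{w}{\prec}$ and on the one-directional nature of the implication due to distortion are accurate and harmless elaborations.
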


\section{Quasi-isometry invariance}\label{final_section}

Using Theorem \ref{subgroup_thm} and a few results below, in this section we will give sufficient conditions for certain subgroups to be invariant under quasi-isometries.
We will begin by showing that certain algebraic hypotheses on a group $G$ and a subgroup $H$ imply that the deep, shallow and 3--separating conditions are satisfied by a uniform neighborhood of $H$ in $\mathscr{C}(G)$.
We saw in Section \ref{terminology_section} that these conditions are essentially preserved under quasi-isometries, so if $f \co \mathscr{C}(G) \to \mathscr{C}(G')$ is a quasi-isometry, $H$ is a subgroup of $G$ that satisfies these algebraic hypotheses, and assuming that the noncrossing condition is suitably satisfied, the existence of a subgroup $H'$ of $G'$ such that $d_{Haus}(H', f(H))<\infty$ will follow from Theorem \ref{subgroup_thm}.

We will conclude the section by seeing that work from \cite{Vav_qlines} implies that the commensurizers of such subgroups are also invariant under quasi-isometries, given an assumption involving the noncrossing condition.

First, we show that any subgroup in a Cayley graph satisfies the shallow condition.

\begin{lem}\label{shallow_for_subgroups}
Let $G$ be a finitely generated group, let $H$ be a subgroup of $G$ and let $R \geq 0$.
Then there is some constant $m_1$, depending on $R$, such that all shallow components of $\mathscr{C}(G)- N_R(H)$ are contained in the $m_1$--neighborhood of $N_R(H)$.

In particular, it follows that $H$ satisfies the shallow condition.
\end{lem}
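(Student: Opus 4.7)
The plan is to exploit the isometric action of $H$ on $\mathscr{C}(G)$ to reduce the problem to finitely many ``representative'' shallow components meeting a bounded ball around the identity. First I would observe that, for each $h \in H$, $hN_R(H) = N_R(hH) = N_R(H)$, so the left-translation action of $H$ on $\mathscr{C}(G)$ restricts to an isometric action on $\mathscr{C}(G) - N_R(H)$, permuting its components and preserving the shallow/deep dichotomy (since isometries send uniform neighborhoods to uniform neighborhoods of the same radius).

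Next, for any shallow component $S$ I would produce a vertex $p \in S$ with $d(p, N_R(H)) \leq 1$: since $\mathscr{C}(G)$ is connected and $S$ is a proper open subset, the frontier of $S$ is nonempty and contained in $N_R(H)$, so some edge runs from a vertex of $S$ to a point of $N_R(H)$, and its $S$-side endpoint is such a $p$. Then $d(p, H) \leq R + 1$, so there is some $h \in H$ with $d(p, h) \leq R + 1$. The translate $h^{-1} S$ is then a shallow component of $\mathscr{C}(G) - N_R(H)$ that contains the vertex $h^{-1} p$, which lies in $\overline{B}_{R+1}(e)$.

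Now I would invoke local finiteness: $\overline{B}_{R+1}(e)$ contains only finitely many vertices, so only finitely many components of $\mathscr{C}(G) - N_R(H)$ meet it. Let $S_1, \ldots, S_N$ denote those among these that are shallow; each is contained in some uniform neighborhood $N_{r_j}(N_R(H))$. Setting $m_1 := \max_j r_j$, the previous paragraph shows that every shallow component $S$ has the form $hS_j$ for some $h \in H$ and some $j$, and consequently
$$S = hS_j \subseteq hN_{r_j}(N_R(H)) = N_{r_j}(hN_R(H)) = N_{r_j}(N_R(H)) \subseteq N_{m_1}(N_R(H)),$$
as required. The ``in particular'' clause then follows immediately, by applying the main statement for each choice of the radius parameter appearing in the definition of the shallow condition.

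The main obstacle, if any, is the minor technicality of producing a vertex of $S$ adjacent to $N_R(H)$, which is a routine fact about open subsets of the connected graph $\mathscr{C}(G)$. The substantive content is the familiar cocompactness argument: $H$ acts on $N_R(H)$ with bounded quotient diameter, so every combinatorial feature of the complement is an $H$-translate of one close to the identity, and local finiteness of $\mathscr{C}(G)$ guarantees only finitely many such features near the identity.
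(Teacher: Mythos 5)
Your proof is correct and takes essentially the same approach as the paper's. The paper passes to the quotient graph $H\backslash\mathscr{C}(G)$, observes that $H\backslash N_R(H)$ is finite and the quotient is locally finite, and concludes that there are only finitely many complementary components (hence finitely many finite ones, with a uniform diameter bound); your argument keeps everything upstairs in $\mathscr{C}(G)$ and uses the $H$-action to translate each shallow component so that it meets $\overline{B}_{R+1}(e)$, then invokes local finiteness of $\mathscr{C}(G)$ directly. These are the same cocompactness argument in two presentations, and your version handles the one technicality (producing a vertex of $S$ within distance $1$ of $N_R(H)$) correctly; the degenerate case where $S$ contains no vertex is immediate since such an $S$ lies in a single open edge.
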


\begin{proof}
Let $S$ be a shallow component of $(\mathscr{C}^ 1(G)-N_R(H))$, so $S$ projects to a finite component of $(H\backslash \mathscr{C}(G)-H\backslash N_R(H))$.
Recall that $H\backslash \mathscr{C}(G)$ is locally finite, so, as $H \backslash N_R(H)$ is finite, there are only finitely many components of $(H\backslash \mathscr{C}(G)-H\backslash N_R(H))$.
In particular there is some $m_1 \geq 0$ such that all finite components of $(H\backslash \mathscr{C}(G)-H\backslash N_R(H))$ are contained in the $m_1$--neighborhood of $H\backslash N_R(H)$.
If follows that $S$ is contained in the $m_1$--neighborhood of $N_R(H)$.
\end{proof}

Next, we will see that the $n$-separating condition is detectable from an algebraic property of $H<G$.
To give a careful statement, we must make some definitions.
For two subsets $X,Y \subseteq G$, let $X+Y$ denote the symmetric difference of $X$ and $Y$.

\begin{defn}
Following \cite{ScottSwarup_intersection_numbers}, if $G$ is a finitely generated group, $H$ a subgroup of $G$ and $X$ a subset of $G$, then we say that $X$ is {\em $H$--finite} if $X$ is contained in finitely many cosets $Hg$ of $H$, or equivalently if $X \subseteq N_r(H)$ for some $r \geq 0$.
If $X$ is not $H$--finite, then we say that $X$ is {\em $H$--infinite}.
\end{defn}

Let $\mathcal{P}(G)$ denote the power set of all subsets of $G$, let $\mathcal{F}_H(G)$ denote the set of all $H$--finite subsets of $G$, and consider the quotient set $\mathcal{P}(G)/\mathcal{F}_H(G)$, where $X,Y \in \mathcal{P}(G)$ are considered equivalent if $X+Y$ is $H$--finite.
This set forms a vector space over $\mathbb{F}_2$, the field with two elements, under the operation of symmetric difference.
In addition, the set admits an action of $G$ on the right.
The fixed set under this action, $(\mathcal{P}(G)/\mathcal{F}_H(G))^G$, consists of equivalence classes with representatives $X$ such that $X+Xg$ is $H$--finite for all $g \in G$, and it forms a subspace of $\mathcal{P}(G)/\mathcal{F}_H(G)$.
The following definition is due to Kropholler and Roller \cite{KrophollerRoller}.

\begin{defn}
Let $G$ be a finitely generated group and let $H$ be a subset of $G$.  Then define 
$$\tilde{e}(G,H) = \dim_{\mathbb{F}_2}(\mathcal{P}(G)/\mathcal{F}_H(G))^G.$$
\end{defn}

Following Bowditch \cite{BowJSJ}, we shall call $\tilde{e}(G,H)$ the number of {\em coends} of $H$ in $G$.  
(Kropholler and Roller called $\tilde{e}(G,H)$ the number of relative ends of $H$ in $G$.)

Note that $\tilde{e}(G,H)=0$ if and only if $G$ is $H$--finite, i.e. $[G:H]<\infty$.

\begin{rmk}
It is a useful and well-known fact that a subset $X$ of $G$ represents an element of $(\mathcal{P}(G)/\mathcal{F}_H(G))^G$ if and only if $H \backslash \delta X$ is a finite set of edges in the quotient graph $H \backslash \mathscr{C}(G)$, where we use $\delta$ to denote the coboundary operator in $\mathscr{C}(G)$.
Thus $X$ represents an element of $(\mathcal{P}(G)/\mathcal{F}_H(G))^G$ if and only if there is some $r \geq 0$ such that $\delta X$ is contained in the $r$--neighborhood of $H$ in $\mathscr{C}(G)$.
\end{rmk}

The next lemma shows that coends have a natural geometric interpretation, which is closely related to the $n$--separating condition.
In light of Lemma \ref{shallow_for_subgroups}, \cite{Vav_qlines} provides an argument for it.

\begin{lem}\label{coends_and_separating}
Let $G$ be a finitely generated group, let $H$ be a subgroup of $G$ and let $n>0$.
Then $\tilde{e}(G,H) \geq n$ if and only if there is some $R>0$ such that $N_R(H)$ is $n$--separating in $\mathscr{C}(G)$.

Moreover, $\tilde{e}(G,H) = \infty$ if and only if, for each $n>0$, there is some $R=R(n)$ such that $N_R(H)$ is $n$--separating.
\end{lem}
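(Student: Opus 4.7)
The plan is to establish the first equivalence directly; the ``moreover'' assertion follows by applying the equivalence for every $n$. The key geometric input is the Remark preceding the lemma: a subset $X \subseteq G$ represents a class in $(\mathcal{P}(G)/\mathcal{F}_H(G))^G$ exactly when $\delta X \subseteq N_r(H)$ for some $r \geq 0$.

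For the ``if'' direction, assume $N_R(H)$ is $n$--separating and pick $n$ distinct deep components $C_1, \ldots, C_n$ of $\mathscr{C}(G)-N_R(H)$; let $X_i$ be the vertex set of $C_i$. Two adjacent vertices in the complement of $N_R(H)$ must lie in the same component, so every edge of $\delta X_i$ has an endpoint in $N_R(H)$, giving $\delta X_i \subseteq N_{R+1}(H)$; hence $[X_i]$ is a fixed class. Because the $X_i$ are pairwise disjoint, for any nonempty $I \subseteq \{1,\ldots,n\}$ the symmetric difference $\sum_{i \in I} X_i$ equals the union $\bigcup_{i \in I} X_i$, which contains a deep component and is therefore $H$--infinite. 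Thus the $[X_i]$ are linearly independent, so $\tilde e(G,H) \geq n$.

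For the ``only if'' direction, choose $n$ linearly independent classes $[X_1], \ldots, [X_n]$, and let $R = \max_i r_i$ where $\delta X_i \subseteq N_{r_i}(H)$. Then no edge of $\delta X_i$ lies entirely outside $N_R(H)$, so $X_i - N_R(H)$ is a disjoint union of components of $\mathscr{C}(G) - N_R(H)$. Let $\mathcal{D}$ and $\mathcal{S}$ be the sets of deep and shallow components, and define an $\mathbb{F}_2$--linear map $\phi \co \mathrm{span}\{[X_1],\ldots,[X_n]\} \to 2^\mathcal{D}$ sending $[X]$ to the collection of deep components contained in $X$; linearity follows because each representing subset and each of their symmetric differences has coboundary in $N_R(H)$, so the deep components contained in a symmetric difference are the symmetric difference of the deep components contained in each. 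To check injectivity, suppose $\phi([X])=\emptyset$, so $X - N_R(H)$ is a union of shallow components of $\mathscr{C}(G)-N_R(H)$. By Lemma \ref{shallow_for_subgroups}, all such shallow components lie in a single uniform neighborhood $N_M(H)$, and $N_R(H)$ itself is $H$--finite, so $X$ is $H$--finite and $[X]=0$. Hence $\phi([X_1]), \ldots, \phi([X_n])$ are linearly independent in $2^\mathcal{D}$, forcing $|\mathcal{D}| \geq n$, i.e., $N_R(H)$ is $n$--separating.

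I expect the main subtlety to be arranging injectivity of $\phi$: without Lemma \ref{shallow_for_subgroups} one could imagine infinitely many shallow components of $\mathscr{C}(G)-N_R(H)$ combining into an $H$--infinite set, so that $[X]$ would be nontrivial even though $\phi([X])$ is empty. The shallow condition for subgroups precisely rules this out. The ``moreover'' statement is then immediate, since $\tilde e(G,H)=\infty$ iff $\tilde e(G,H) \geq n$ for every $n$, and by the first equivalence this holds iff for each $n$ some $N_{R(n)}(H)$ is $n$--separating.
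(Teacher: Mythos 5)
Your argument is correct, and it is the natural one the paper is alluding to: the paper itself does not give a proof but says an argument follows from \cite{Vav_qlines} ``in light of Lemma \ref{shallow_for_subgroups},'' and your use of that lemma is exactly at the one place where it is genuinely needed (ruling out $H$--infinite unions of shallow components, so that a class mapping to the empty set of deep components is trivial). The rest — taking vertex sets of deep components to produce independent classes with $\delta X_i \subseteq N_{R+1}(H)$ in one direction, and mapping classes to the sets of deep components they contain in the other — is the expected route.
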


Next we show that uniform neighborhoods of $H$ satisfy the deep condition, as long as ``smaller'' subgroups coarsely do not separate $G$.

\begin{lem}\label{deep_for_subgroups}
Let $G$ be a finitely generated group with subgroup $H$, and suppose that, for all subgroups $K$ of infinite index in $H$, $\tilde{e}(G, K) = 1$.
Then $H$ satisfies the deep condition.
\end{lem}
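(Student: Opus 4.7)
The plan is to argue by contradiction: if $H$ fails the deep condition at some radius $r$, I will exhibit an infinite-index subgroup $K \leq H$ with $\tilde{e}(G, K) \geq 2$, contradicting the hypothesis. Failure of the deep condition means there exist $p_n \in N_r(H)$ and deep components $D_n$ of $\mathscr{C}(G) - N_r(H)$ with $d(p_n, D_n) \to \infty$. Since $H$ acts cocompactly on $N_r(H)$ with finite quotient, I can left-translate by elements of $H$ to place each $p_n$ in a fixed finite set $F \subseteq \overline{B}_r(1)$, and by passing to a subsequence assume $p_n = p$ is constant; local finiteness of $\mathscr{C}(G)$ then forces infinitely many distinct deep components to appear.

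The central geometric observation is that for any deep component $D$, its coboundary $\delta D$ (the set of edges with one endpoint in $D$ and one outside) lies in a bounded neighborhood of its stabilizer $K := \mathrm{Stab}_H(D)$. To see this, note that each $e \in \delta D$ has an endpoint in $N_r(H)$, which I write as $h_e q_e$ with $h_e \in H$ and $q_e \in F$; then the translated edge $h_e^{-1} e$ lies in the finite set $\mathcal{E}_0$ of edges with an endpoint in $F$, and its non-local endpoint lies in the deep component $h_e^{-1} D$. Since every vertex belongs to at most one deep component, $h_e^{-1} D$ is determined by $h_e^{-1} e$ alone, forcing $h_e$ into a single right coset of $K$. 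Thus $\delta D$ is a finite union of left $K$-orbits of edges, $K \cdot (h_0(e_0) e_0)$. By left-invariance of the word metric,
$$d(kv,K) = \inf_{k' \in K} |(kv)^{-1} k'| = \inf_{k'' \in K} |v^{-1} k''| = d(v, K)$$
(substituting $k'' = k^{-1} k'$, valid since $K$ is a subgroup), so each left $K$-orbit of a vertex lies at constant distance from $K$; combining finitely many gives $\delta D \subseteq N_C(K)$, i.e., $\delta D$ is $K$-finite.

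With this in hand, $D$ represents a $G$-invariant class in $\mathcal{P}(G)/\mathcal{F}_K(G)$, which is non-trivial since $D$ is $K$-infinite (it is deep and $N_s(K) \subseteq N_s(H)$) and its complement contains another deep component, also $K$-infinite; so $\tilde{e}(G, K) \geq 2$. To reach a contradiction I need $[H:K] = \infty$, which is immediate if some $D_n$ has an infinite $H$-orbit. The main obstacle is the remaining case, where every $H$-orbit of deep components is finite so each individual stabilizer has finite index in $H$. I expect to dispose of it by applying the same coboundary argument to a union $Y = \bigsqcup_{i \in I} D_i$ of representatives of infinitely many distinct orbits, whose common stabilizer $K = \bigcap_{i \in I} \mathrm{Stab}_H(D_i)$ is arranged to have infinite index by picking $I$ so that the chain of stabilizers strictly decreases. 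The delicate point is verifying $\delta Y \subseteq N_C(K)$ when $K$ is much smaller than each $\mathrm{Stab}_H(D_i)$; the key input is that for each local edge $e_0 \in \mathcal{E}_0$ the condition $h e_0 \in \delta Y$ depends only on which $D_i$ the translate $h^{-1} D_{i_0}$ equals, and since $Y$ meets each $H$-orbit in only finitely many components this still restricts $h$ to a finite union of right $K$-cosets.
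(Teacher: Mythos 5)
Your core geometric observation---that for any deep component $D$ of $\mathscr{C}(G)-N_r(H)$, the coboundary $\delta D$ lies in a bounded neighborhood of $K=\mathrm{Stab}_H(D)$---is correct, and it is a clean reformulation of what the paper also proves (the paper phrases it as $fr(D)$ being a finite Hausdorff distance from $\mathrm{Stab}_H(D)$). Your ``easy case'' is also fine: if some deep component has infinite $H$--orbit, then $K$ has infinite index, $D$ is a nontrivial $K$--almost invariant class, so $\tilde{e}(G,K)\geq 2$, contradiction.

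The gap is in the ``hard case,'' where every deep component has finite $H$--orbit, and it is not repairable along the lines you sketch. You want to choose a family $I$ so that $K=\bigcap_{i\in I}\mathrm{Stab}_H(D_i)$ has infinite index in $H$ ``by picking $I$ so that the chain of stabilizers strictly decreases.'' But there is no reason such a choice exists: all the stabilizers might equal $H$ itself, or more generally might all contain a single common finite-index subgroup $K_0\leq H$, in which case every intersection $\bigcap_{i\in I}\mathrm{Stab}_H(D_i)$ still has finite index and you get no contradiction from the hypothesis. Moreover, even when you can form $Y=\bigsqcup_{i\in I}D_i$, the assertion $\delta Y\subseteq N_C(K)$ is false in general when $K$ is a proper intersection: an element $h\in H$ with $h e_0\in\delta D_{i}$ only needs to send $h^{-1}D_{i}$ to the \emph{particular} deep component meeting $e_0$, which constrains $h$ to a coset of $\mathrm{Stab}_H(D_i)$, not of $K$; so $\delta Y$ is a finite union of cosets of the (possibly much larger) individual stabilizers, which need not be $K$--finite.

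What the paper does instead at this point is not to manufacture another infinite-index subgroup, but to observe that once every $\mathrm{Stab}_H(D)$ is finite index, your coboundary observation already shows each $fr(D)$ is at finite Hausdorff distance from $H$; the needed \emph{uniform} bound then comes from local finiteness of $\mathscr{C}(G)$: if $d_{\mathrm{Haus}}(H,fr(D_n))$ were unbounded, one could translate each $D_n$ by some $h_n\in H$ so that $fr(h_n^{-1}D_n)$ meets $\overline{B}_r(e)$; after passing to a subsequence with distinct Hausdorff distances the translates $h_n^{-1}D_n$ are pairwise disjoint deep components all meeting the finite ball $\overline{B}_{r+1}(e)$, which is impossible. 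That local-finiteness step is what your argument is missing, and it cannot be replaced by the ``intersect stabilizers'' idea. (The paper also separates out the trivial cases of zero or one deep complementary component; your contradiction setup, which produces infinitely many distinct deep components through a fixed vertex, handles those implicitly.)
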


Our hypothesis about subgroups of $H$ is generally a necessary one.    
In the special case that $H \cong \mathbb{Z}$, we are imposing the condition that $e(G)=1$;  for a counterexample in the absence of this assumption, consider the free group $F_n$, $n>1$, with respect to a standard generating set, and let $H$ be the subgroup generated by one of the standard generators.  
Then certainly $H$ will not satisfy the deep condition.

Generalizing this counterexample, let $G$ be an amalgamated free product $A *_C B$, where $A \cong B \cong \mathbb{Z}^n$ and $C \cong \mathbb{Z}^{n-1}$.  Let $H = A$, and then $H$ does not satisfy the deep condition, with the problem stemming from the fact that $\mathscr{C}(G)$ is coarsely separated by ``smaller'' regions, in particular the subgroup $C$.

\begin{proof}[Proof of Lemma \ref{deep_for_subgroups}]
Fix $r\geq 0$, and it shall suffice to show that there is some constant $m_0(r)$ such that, for any $p \in N_r(H)$, $B_{m_0(r)}(p)$ meets all the deep components of the complement of $N_r(H)$.

If $(\mathscr{C}(G)-N_r(H))$ has no deep components, i.e. $H$ is of finite index in $G$, then the deep condition is vacuously satisfied.
Suppose then that there is only one deep complementary component of $N_r(H)$, say $D$.
Then $hD = D$ for all $h \in H$, since in general $H$ acts by permuting the deep complementary components of $N_r(H)$.  
Let $M>0 $ be such that $d(e, D)<M$, and it follows that, for any $h \in H$, $d(h,D)<M$ and hence for any $p \in N_r(H)$, $d(p, D)<(M+r)$.
Hence in this case, our claim follows with $m_0(r) = (M+r)$.

So suppose that $N_r(H)$ is 2--separating.
We will use the assumption on subgroups of $H$ to first show that the frontier of any deep component is a finite Hausdorff distance from $H$.
Then we will show that there are only finitely many possibilities for such distances that can be attained, and from this the lemma will follow.

Let $D$ be a deep component of $(\mathscr{C}(G)-N_r(H))$.  
We claim that the subgroup of $H$ that stabilizes $D$, $stab_H(D)$, is of finite Hausdorff distance from $fr(D)$.
Note that $stab_H(D)$ is contained in a uniform neighborhood of $fr(D)$, since, for any $h \in stab_H(D)$, $d(e,fr(D)) = d(h, fr(D))$.
Thus we must show that $fr(D)$ is contained in a uniform neighborhood of $stab_H(D)$.
It suffices to assume that $fr(D)$ is infinite.  

First we note that each point of $fr(D)$ is of distance $r$ from some point of $H$.
Let $fr(D) = \{ d_i\}$ and, for each $i$, let $h_i$ be a point of $H$ such that $d(d_i, h_i) = r$.
Then, for each $i$, $h_i^{-1}D$ is a deep component of $(\mathscr{C}(G)-N_r(H))$ that meets the closed ball $\overline{B}_r(e)$. 
It follows that, for all $i,j$, either $h_i^{-1}D = h_j^{-1}D$, or the two regions are disjoint.
As $\overline{B}_r(e)$ is finite, $\{ h_i^{-1}D\}$ must be a finite collection of deep components of the complement of $N_r(H)$, say equal to $\{D_1, \ldots , D_n\}$.

For each $j$, choose $k_j \in H$ such that $D_j = k_jD$, and hence for all $i, j$ such that $h_i^{-1}D = k_jD = D_j$, $h_ik_jD = D$, i.e. $h_ik_j \in stab_H(D)$.
As $d(h_ik_j, h_i) = |k_j|$, if $t = \max_j |k_j|$ then it follows that $fr(D)$ is contained in the $(r+t)$--neighborhood of $stab_H(D)$.
Hence $fr(D)$ is a finite Hausdorff distance from $stab_H(D)$.

Let $N$ be a uniform neighborhood of $stab_H(D)$ that contains $fr(D)$.  
Then by Lemma \ref{shallow_for_subgroups}, $N$ satisfies the shallow condition, hence $(N\cup D)$ contains a deep component of the complement of $N$.
Since $N_r(H)$ is 2--separating, it follows that the complement of $N$ contains another deep component, so, by Lemma \ref{coends_and_separating}, $\tilde{e}(G,stab_H(D))>1$.
But this contradicts our hypothesis about subgroups of $H$, unless $stab_H(D)$ is of finite index in $H$.
Hence this index is finite, so $stab_H(D)$ is a finite Hausdorff distance from $H$, and thus $fr(D)$ is a finite Hausdorff distance from $H$ as well.

Next, we claim that there is a bound on the Hausdorff distances of the frontiers of these deep components to $H$.  
For suppose instead that $(\mathscr{C}(G)-N_r(H))$ has deep components $D_1, D_2, \ldots$ such that $d_{Haus}(H, fr(D_i)) \to \infty$.
By passing to a subsequence, we shall assume that the values of $d_{Haus}(H, fr(D_i))$ are all distinct.

Note that, for any $i$ and any $h \in H$, $d_{Haus}(H, fr(hD_i)) = d_{Haus}(hH, h\cdot fr(D_i)) = d_{Haus}(H, fr(D_i))$ and hence, for any $h,h' \in H$ and $i \neq j$, $hD_i \neq h'D_j$.  
Note also that $hD_i$ and $h'D_j$ are deep components of $(\mathscr{C}(G)-N_r(H))$, and thus must be disjoint.

Recall that $fr(D_i) \subseteq N_r(H)$, and let $h_i \in H$ be such that $fr(D_i)$ meets $\overline{B}_r(h_i)$ for each $i$, and consider $\{ h_i^{-1}D_i\}$.
This set is a collection of disjoint connected regions of $\mathscr{C}(G)$, all of which meet $\overline{B}_{r}(e)$.
As $\mathscr{C}(G)$ is locally finite, we have reached a contradiction.

Thus there must be a uniform bound on $d_{Haus}(H,D)$ for all deep components $D$ of $(\mathscr{C}(G)-N_r(H))$.
Let $M$ denote this bound.
Then any point in $N_r(H)$ is of a distance no more than $(M+r)$ from any deep component $D$, so our claim follows for $m_0(r) = (M+r)$.
Thus $H$ satisfies the deep condition.
\end{proof}

We will also need the next lemma.

\begin{lem}\label{subgroups_are_ci}
Let $f\co \mathscr{C}(G) \to \mathscr{C}(G')$ be a quasi-isometry between Cayley graphs of finitely generated groups, with $H$ a finitely generated subgroup of $G$ and $H'$ a finitely generated subgroup of $G'$ such that
$$d_{Haus}(H', f(H))< \infty.$$
Then $(H,d_H)$ and $(H', d_{H'})$ are coarsely isometric.
\end{lem}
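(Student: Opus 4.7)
The plan is to produce a coarse isometry $(H, d_H) \to (H', d_{H'})$ as a four-fold composition, each factor being explicitly a coarse isometry, and then invoke the fact that compositions of coarse isometries are coarse isometries.

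First, I would appeal to Lemma \ref{inclusion_is_wup} to obtain that the inclusions
$$i_H \co (H, d_H) \to (H, d_G), \qquad i_{H'} \co (H', d_{H'}) \to (H', d_{G'})$$
are coarse isometries. Then, by Lemma \ref{coarse_inverses}, $i_{H'}$ admits a coarse inverse $i_{H'}' \co (H', d_{G'}) \to (H', d_{H'})$, which is itself a coarse isometry.

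Second, I would consider the restriction $f|_H \co (H, d_G) \to (f(H), d_{G'})$. Since $f$ is a quasi-isometry, for any $h_1, h_2 \in H$ we have $\tfrac{1}{\Lambda} d_G(h_1,h_2) - C \leq d_{G'}(f(h_1), f(h_2)) \leq \Lambda d_G(h_1, h_2) + C$, so taking $\phi(r) = \max\{0, r/\Lambda - C\}$ and $\Phi(r) = \Lambda r + C$ shows $f|_H$ is uniformly distorting. It is tautologically onto its codomain $f(H)$, hence coarsely onto, and therefore a coarse isometry. Third, set $D = d_{Haus}(f(H), H') < \infty$ and let $p \co f(H) \to H'$ be any nearest-point projection (a choice function, not necessarily continuous), so $d_{G'}(x, p(x)) \leq D$ for all $x \in f(H)$. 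A triangle inequality yields $|d_{G'}(p(x), p(y)) - d_{G'}(x,y)| \leq 2D$, so $p$ is uniformly distorting with $\phi(r) = \max\{0, r-2D\}$ and $\Phi(r) = r + 2D$; and $p$ is $(2D)$--onto because any $h' \in H'$ lies within $D$ of some $f(h) \in f(H)$, hence within $2D$ of $p(f(h))$. Thus $p \co (f(H), d_{G'}) \to (H', d_{G'})$ is a coarse isometry.

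Finally, the composition
$$i_{H'}' \circ p \circ f|_H \circ i_H \co (H, d_H) \to (H', d_{H'})$$
is a coarse isometry, establishing the lemma. There is no real obstacle here: the argument is entirely assembly of previously established facts, and the only mildly nontrivial step is verifying that the nearest-point projection $p$ is a coarse isometry between the induced metrics, which is routine.
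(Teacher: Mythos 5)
Your proof is correct and follows essentially the same route as the paper: both build the coarse isometry as the composition $\iota_{H'}' \circ \pi \circ f|_H \circ \iota_H$, using Lemma \ref{inclusion_is_wup} for the inclusions, Lemma \ref{coarse_inverses} for the coarse inverse, and nearest-point projection for the last leg. You simply spell out the verifications (e.g.\ that $f|_H$ and $\pi$ are coarse isometries) that the paper states without detail.
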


\begin{proof}
Let $\pi \co f(H) \to H'$ denote nearest point projection, and let 
$$\iota_H \co (H, d_H) \to (H, d_G), \ \iota_{H'} \co (H', d_{H'}) \to (H', d_{G'})$$
denote the identity maps.
By Lemma \ref{inclusion_is_wup}, $\iota_H$ and $\iota_{H'}$ are coarse isometries; let $\iota_{H'}'$ be a coarse inverse to $\iota_{H'}$.  
Note that $\pi$ is a coarse isometry, and $f|_H$ is a quasi-isometry onto its image.

It follows that $(\iota_{H'}')\pi f \iota_H \co (H,d_H) \to (H', d_{H'})$ is a coarse isometry.
\end{proof}

Recall the characterization of subgroups being finitely generated, given in terms of coarse 0--connectedness by Proposition \ref{subgroup_fg}.
It follows from Theorem \ref{fn_ci_inv} and Lemma \ref{subgroups_are_ci} that, if $f \co \mathscr{C}(G) \to \mathscr{C}(G')$ is a quasi-isometry, and $H$ and $H'$ are subgroups of $G$ and $G'$ respectively such that $d_{Haus}(H', f(H))<\infty$, then $H$ is finitely generated if and only if $H'$ is finitely generated.
Immediate from this observation and Proposition \ref{ci_is_qi} is the following:

\begin{prop}\label{ci_subgps_are_qi}
If $f \co \mathscr{C}(G) \to \mathscr{C}(G')$ is a quasi-isometry between Cayley graphs of finitely generated groups, and if $H$ and $H'$ are subgroups of $G$ and $G'$ respectively such that
$$d_{Haus}(H',f(H))<\infty,$$
then $H$ is finitely generated if and only if $H'$ is finitely generated, and in this case $H$ and $H'$ are quasi-isometric.
\end{prop}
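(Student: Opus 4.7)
The plan is to chain together the three ingredients already flagged immediately before the statement. For the finite-generation equivalence, I would first construct a coarse isometry between the two subsets with their ambient metrics. Let $\pi \co f(H) \to H'$ denote a nearest point projection; since $d_{Haus}(H',f(H))<\infty$, the map $\pi$ is a coarse isometry, and composing it with the restriction $f|_H\co (H,d_G)\to (f(H),d_{G'})$ (a coarse isometry onto its image, being the restriction of a quasi-isometry) yields a coarse isometry $\psi \co (H,d_G)\to (H',d_{G'})$. Applying Theorem \ref{fn_ci_inv} to $\psi$ with $n=0$ transfers coarse $0$--connectedness between $H\subseteq \mathscr{C}(G)$ and $H'\subseteq \mathscr{C}(G')$, and Proposition \ref{subgroup_fg} identifies coarse $0$--connectedness of a subgroup in its ambient Cayley graph with finite generation. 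Hence $H$ is finitely generated if and only if $H'$ is.

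Assume now that both subgroups are finitely generated. Lemma \ref{subgroups_are_ci} supplies a coarse isometry between the intrinsic word-metric spaces $(H,d_H)$ and $(H',d_{H'})$. These vertex sets sit $1$--densely in the corresponding Cayley graphs $\mathscr{C}(H)$ and $\mathscr{C}(H')$, which are geodesic; the inclusions of the vertex sets into the Cayley graphs are isometric embeddings and their nearest-vertex retractions are coarse isometries, so pre- and post-composing with these produces a coarse isometry $\mathscr{C}(H)\to \mathscr{C}(H')$. Proposition \ref{ci_is_qi} then upgrades this to a quasi-isometry between the geodesic Cayley graphs, showing that $H$ and $H'$ are quasi-isometric.

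The argument is really an assembly of preparatory results, and no step presents a serious obstacle. The only place that calls for a moment of care is the passage from the non-geodesic metric spaces $(H,d_H),(H',d_{H'})$ to their geodesic Cayley graphs, which is required so that Proposition \ref{ci_is_qi} applies; this is handled routinely by the coarse density of vertex sets in the Cayley graphs and the fact that compositions of coarse isometries are again coarse isometries.
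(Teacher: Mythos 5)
Your proof is correct and follows the same route as the paper: Proposition \ref{subgroup_fg} to translate finite generation into coarse $0$--connectedness, Theorem \ref{fn_ci_inv} to transfer it across a coarse isometry, and Proposition \ref{ci_is_qi} to upgrade the coarse isometry of Lemma \ref{subgroups_are_ci} to a quasi-isometry. You are in fact slightly more careful than the paper in two places: you build the coarse isometry $(H,d_G)\to(H',d_{G'})$ directly rather than invoking Lemma \ref{subgroups_are_ci} (whose statement presupposes the finite generation you are trying to establish), and you explicitly address the passage from the discrete spaces $(H,d_H),(H',d_{H'})$ to geodesic Cayley graphs so that Proposition \ref{ci_is_qi} applies.
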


We can now prove the main result of this section:

\begin{thm}\label{main_thm}
Let $G$ and $G'$ be finitely generated groups and let $f\co \mathscr{C}(G) \to \mathscr{C}(G')$ be a quasi-isometry.
Suppose that $H$ is a subgroup of $G$ such that $\tilde{e}(G,H) \geq 3$, and, for any infinite index subgroup $K$ of $H$, $\tilde{e}(G,K)=1$.

If sufficiently large uniform neighborhoods of $f(H)$ in $\mathscr{C}(G')$ satisfy the noncrossing condition, then $G'$ contains a subgroup $H'$ such that 
$$d_{Haus}(H', f(H))<\infty.$$
In addition, $H$ is finitely generated if and only if $H'$ is finitely generated, and in this case $H$ is quasi-isometric to $H'$.
\end{thm}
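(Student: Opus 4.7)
The plan is to translate the algebraic hypotheses on $H < G$ into the geometric conditions needed by Theorem \ref{subgroup_thm}, transport these across the quasi-isometry $f$ to a uniform neighborhood of $f(H)$ in $\mathscr{C}(G')$, and then invoke Theorem \ref{subgroup_thm} there. The conclusion about finite generation and quasi-isometry will be a direct appeal to Proposition \ref{ci_subgps_are_qi}.

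First I would verify that $H$ itself satisfies the deep, shallow, and $3$--separating conditions inside $\mathscr{C}(G)$. The shallow condition for any uniform neighborhood of $H$ is immediate from Lemma \ref{shallow_for_subgroups}. The hypothesis $\tilde{e}(G,H) \geq 3$ together with Lemma \ref{coends_and_separating} furnishes some $R_0 \geq 0$ such that $N_{R_0}(H)$ is $3$--separating. The hypothesis that $\tilde{e}(G,K) = 1$ for every infinite index subgroup $K \leq H$ is precisely what Lemma \ref{deep_for_subgroups} needs, so $H$ (and hence every uniform neighborhood of $H$) satisfies the deep condition. Passing to the subgraph of $\mathscr{C}(G)$ spanned by the vertices of $N_{R_0}(H)$, we obtain a subgraph which satisfies all three conditions simultaneously.

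Next I would choose $R \geq 0$ large enough so that $Y := N_R(f(H))$ (again, the spanned subgraph) satisfies the noncrossing condition, which is possible by hypothesis. Enlarging $R$ further if necessary, Lemmas \ref{shallow_qi_inv}, \ref{n--sep_qi_inv}, and \ref{deep_qi_inv}, applied to the pair $N_{R_0}(H)$ and $Y$ under $f$ (noting that $d_{\mathrm{Haus}}(Y, f(N_{R_0}(H))) < \infty$), show that $Y$ also satisfies the shallow, $3$--separating, and deep conditions. Thus $Y$ satisfies all four hypotheses of Theorem \ref{subgroup_thm}, which produces a subgroup $H' \leq G'$ with $d_{\mathrm{Haus}}(Y, H') < \infty$. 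Since $d_{\mathrm{Haus}}(Y, f(H)) \leq R$, the triangle inequality gives $d_{\mathrm{Haus}}(H', f(H)) < \infty$, which is the first conclusion. The second conclusion—that $H$ is finitely generated iff $H'$ is, with a quasi-isometry between them in that case—then follows directly from Proposition \ref{ci_subgps_are_qi} applied to $f$ and the pair $H, H'$.

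The main obstacle I anticipate is the bookkeeping involved in choosing a single radius $R$ that simultaneously satisfies all of the following: it must be large enough that $Y$ lies in the range where the noncrossing hypothesis is active; large enough that Lemma \ref{n--sep_qi_inv} upgrades the $3$--separating condition of $N_{R_0}(H)$ to the $3$--separating condition of $Y$ rather than merely of some further enlargement $N_{R_1}(Y)$; and such that the spanned subgraph is actually the object to which Theorem \ref{subgroup_thm} is applied. Each of the three transfer lemmas in Section \ref{terminology_section} may demand its own enlargement, so one iterates once through the three lemmas to fix a common radius before invoking Theorem \ref{subgroup_thm}; this is a routine but technical step.
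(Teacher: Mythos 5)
Your proposal is correct and follows essentially the same route as the paper's own proof: translate $\tilde{e}(G,H)\geq 3$ and the condition on infinite-index subgroups into the shallow, deep, and $3$--separating conditions via Lemmas \ref{shallow_for_subgroups}, \ref{deep_for_subgroups}, and \ref{coends_and_separating}, push them across $f$ with Lemmas \ref{shallow_qi_inv}, \ref{n--sep_qi_inv}, \ref{deep_qi_inv}, add the noncrossing hypothesis, invoke Theorem \ref{subgroup_thm}, and finish with Proposition \ref{ci_subgps_are_qi}. The only cosmetic difference is the order in which the noncrossing and the other three conditions are secured for $Y$: the paper first gets a neighborhood satisfying deep/shallow/$3$--separating, then enlarges for noncrossing, observing that enlargement preserves the first three; you do it the other way and rely on the noncrossing hypothesis holding for \emph{all} sufficiently large neighborhoods, which is the intended reading.
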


\begin{proof}
Let $G, G', H$ and $f$ be as stated.
By Lemmas \ref{shallow_for_subgroups} and \ref{deep_for_subgroups}, $H$ satisfies the shallow and deep conditions, hence so does any uniform neighborhood of $H$ in $\mathscr{C}(G)$.
We have $\tilde{e}(G,H) \geq 3$, so by Lemma \ref{coends_and_separating}, there is some $R>0$ such that $N_R(H)$ is 3--separating.
It follows from Lemmas \ref{shallow_qi_inv}, \ref{n--sep_qi_inv} and \ref{deep_qi_inv} that there is some uniform neighborhood $Y$ of $f(H)$ that satisfies the deep, shallow and 3--separating conditions.

We have assumed that sufficiently large uniform neighborhoods of $f(H)$ satisfy the noncrossing condition, hence, by replacing $Y$ with a bigger uniform neighborhood of $f(H)$ if necessary, we have that $Y$ satisfies the noncrossing condition.
Note that replacing $Y$ in this manner will not change that $Y$ satisfies the deep, shallow, and 3--separating conditions.
Thus, by Theorem \ref{subgroup_thm}, $G'$ contains a subgroup $H'$ such that $d_{Haus}(Y,H')<\infty$, hence $d_{Haus}(H', f(H))<\infty$.

Finally, by Proposition \ref{ci_subgps_are_qi}, $H$ is finitely generated if and only if $H'$ is also finitely generated, and in this case the two subgroups are quasi-isometric.
\end{proof}

We now turn our attention to commensurizer subgroups.

\begin{defn}
If $H_1$ and $H_2$ are subgroups of a group $G$, then they are said to be {\em commensurable} if $[H_1:H_1 \cap H_2]<\infty$ and $[H_2:H_1 \cap H_2] <\infty$.
If $H$ is a subgroup of $G$, then the {\em commensurizer} of $H$ in $G$ is defined to be the subgroup $Comm_G(H) = \{ g \in G : H$ and $g^{-1}Hg$ are commensurable$\}$.
\end{defn}
The commensurizer has the following geometric characterization.

\begin{lem}\label{comm_char}
{\rm {\bf \cite{Vav_qlines}}}
If $G$ is a finitely generated group with subgroup $H$, then 
$$Comm_G(H) = \{ g \in G : d_{Haus}(H, gH)< \infty\}.$$
\end{lem}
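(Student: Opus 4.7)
The plan is to prove the two inclusions separately. For $\subseteq$, I would take $g \in Comm_G(H)$, so by definition $H$ and $g^{-1}Hg$ are commensurable, and apply Remark~\ref{comm_implies_fin_H_dist} to conclude $d_{Haus}(H, g^{-1}Hg) < \infty$. Since left multiplication by $g$ is an isometry of $\mathscr{C}(G)$ that sends $H$ to $gH$ and $g^{-1}Hg$ to $Hg$, this translates to $d_{Haus}(gH, Hg) < \infty$. A direct check using left-invariance of the word metric gives $d_{Haus}(H, Hg) \leq |g|$, because $d(h, hg) = |g|$ for every $h \in H$. The triangle inequality for Hausdorff distance then yields $d_{Haus}(H, gH) < \infty$.

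For $\supseteq$, suppose $d_{Haus}(H, gH) < \infty$; running the same triangle-inequality argument in reverse (using that left multiplication by $g^{-1}$ is an isometry and again the $|g|$-bound on $d_{Haus}(H, Hg)$) gives $d_{Haus}(H, g^{-1}Hg) < \infty$. Set $K := g^{-1}Hg$ and fix $r \geq 0$ with $K \subseteq N_r(H)$ and $H \subseteq N_r(K)$; the task reduces to showing that both $[K : H \cap K]$ and $[H : H \cap K]$ are finite. The key observation is the identification $N_r(H) = H \cdot \overline{B}_r(e)$, which follows from left-invariance of $d$: since $\overline{B}_r(e)$ is finite, $K \subseteq N_r(H)$ means $K$ lands in finitely many right cosets $Hb$, $b \in \overline{B}_r(e)$. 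Thus the map $\pi : K \to H \backslash G$, $k \mapsto Hk$, has image of cardinality at most $|\overline{B}_r(e)|$. One checks that $\pi(k_1) = \pi(k_2)$ iff $k_1 k_2^{-1} \in H$; since $k_1, k_2 \in K$ automatically gives $k_1 k_2^{-1} \in K$, this is equivalent to $k_1 k_2^{-1} \in H \cap K$. So the fibers of $\pi$ are right cosets of $H \cap K$ in $K$, yielding $[K : H \cap K] \leq |\overline{B}_r(e)|$. The symmetric argument applied to $H \subseteq N_r(K) = K \cdot \overline{B}_r(e)$ gives $[H : H \cap K] < \infty$, so $H$ and $K = g^{-1}Hg$ are commensurable and $g \in Comm_G(H)$.

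The main obstacle, really just a matter of clear bookkeeping, is translating the metric condition $K \subseteq N_r(H)$ into the algebraic statement that $K$ is covered by finitely many right cosets of $H$. Once the identification $N_r(H) = H \cdot \overline{B}_r(e)$ is in hand, both inclusions in the lemma reduce to short coset-counting arguments, with the $|g|$-bound on right translation handling the passage between $gH$ and $g^{-1}Hg$.
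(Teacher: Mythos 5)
Your proof is correct. The paper itself does not give an argument for this lemma (it is cited from an earlier reference), but your proof is the natural one and every step checks out: the forward inclusion follows from Remark~\ref{comm_implies_fin_H_dist} together with the isometry $x \mapsto gx$ and the bound $d_{Haus}(H,Hg) \leq |g|$; the reverse inclusion reduces, via the same bookkeeping, to showing that $d_{Haus}(H,K)<\infty$ forces $[K:H\cap K]<\infty$ and $[H:H\cap K]<\infty$ for $K=g^{-1}Hg$, and your coset-counting argument using the identification of vertices of $N_r(H)$ with $H\cdot(\overline{B}_r(e)\cap G)$ and the observation that the map $K \to H\backslash G$ has fibers that are right cosets of $H\cap K$ does exactly this. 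The only small point of care, which you handle correctly in spirit, is that one should work with $\overline{B}_r(e)\cap G$ (the finitely many group elements in the ball) rather than $\overline{B}_r(e)$ as a subset of the whole Cayley graph, but this does not affect the argument.
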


The proof of next proposition follows that of an analogous result in \cite{Vav_qlines}.

\begin{prop}\label{comm_prop}
Let $G$ be a finitely generated group and let $\mathscr{Y}$ be a collection of pairwise finite Hausdorff distance 3--separating subsets of $\mathscr{C}(G)$ that satisfy $deep(m_0)$ and $shallow(m_1)$.
Suppose that $\{ gY \}_{g \in G, Y \in \mathscr{Y}}$ satisfies the noncrossing condition.
Finally, fix any $Y \in \mathscr{Y}$ and by Theorem \ref{subgroup_thm} there is a subgroup $H$ of $G$ such that $d_{Haus}(Y,H)<\infty$.

Then there is some $R_1\geq 0$ such that $\mathscr{Y} \subseteq N_{R_1}(Comm_G(H))$.
\end{prop}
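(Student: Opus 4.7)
The plan is to mimic the opening argument from the proof of Theorem \ref{subgroup_thm}. Consider the enriched family
$$\mathscr{F} := \{\,gZ : g \in G,\ Z \in \mathscr{Y},\ e \in gZ\,\}$$
of all $G$--translates of members of $\mathscr{Y}$ that contain the identity. Every element of $\mathscr{F}$ is $3$--separating and satisfies $deep(m_0)$ and $shallow(m_1)$ (all $G$--invariant properties); the wider family $\{gZ\}_{g \in G,\,Z \in \mathscr{Y}}$ satisfies $noncrossing(k)$ by hypothesis; and every member of $\mathscr{F}$ meets the ball $\overline{B}_0(e)$. Hence Lemma \ref{seq_is_fin} applies to $\mathscr{F}$.

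I would then replay the proof of claim $(*)$ from Theorem \ref{subgroup_thm}: an infinite subfamily of $\mathscr{F}$ pairwise at Hausdorff distance $> x_1$ would contradict Lemma \ref{seq_is_fin}, so $\mathscr{F}$ decomposes into finitely many equivalence classes under finite Hausdorff distance, each of uniformly bounded diameter $\leq \mu$. Fix representatives $Y_1, \ldots, Y_n$ and, for each $i$, write $Y_i = g_i Z_i$ with $Z_i \in \mathscr{Y}$ and $g_i \in G$ (possible since $e \in Y_i$ forces $g_i^{-1} \in Z_i$, so $|g_i|$ is just a fixed finite number). Set $M := \max_i |g_i|$.

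For any vertex $y \in Y'$ with $Y' \in \mathscr{Y}$, the translate $y^{-1}Y'$ contains $e$, hence lies in $\mathscr{F}$ and is equivalent to some $Y_i$, giving $d_{Haus}(y^{-1}Y', Y_i) \leq \mu$. Both $Y'$ and $Z_i$ are finite Hausdorff distances from $Y$, hence from $H$, so $d_{Haus}(y^{-1}Y', y^{-1}H) < \infty$ and $d_{Haus}(Y_i, g_iH) = d_{Haus}(Z_i, H) < \infty$. Chaining through the triangle inequality yields $d_{Haus}(H, y g_i H) < \infty$, which by Lemma \ref{comm_char} means $y g_i \in Comm_G(H)$. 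Therefore $d(y, Comm_G(H)) \leq |g_i| \leq M$, and taking $R_1 := M + 1$ (to also cover non-vertex points of the subsets in $\mathscr{Y}$) finishes the argument.

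The subtlety I expect to be the main obstacle is the uniformity of the bound $M$ over all of $\mathscr{Y}$. The hypothesis provides only \emph{pairwise} finite Hausdorff distances inside $\mathscr{Y}$, so $d_{Haus}(Y', H)$ may well be unbounded as $Y'$ varies over $\mathscr{Y}$. The argument nevertheless produces a uniform $R_1$ because, once one has verified $y g_i \in Comm_G(H)$, the estimate $d(y, Comm_G(H)) \leq |g_i|$ no longer involves any Hausdorff distance --- only the word lengths of the finitely many fixed representatives $g_1, \ldots, g_n$, which are bounded by construction.
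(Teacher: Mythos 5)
Your argument is correct, and since the paper defers to an analogous argument in \cite{Vav_qlines} (itself built on the same family-decomposition technique used in the proof of Theorem \ref{subgroup_thm}), your route is essentially the intended one. You correctly identify the crux: the hypotheses give no uniform bound on $d_{Haus}(Y',H)$ over $Y' \in \mathscr{Y}$, but this doesn't matter because the only quantity that must be bounded uniformly is $d(y, yg_i) = |g_i|$, and the $g_i$ are finitely many fixed elements once the equivalence classes of $\mathscr{F}$ have been pinned down via Lemma \ref{seq_is_fin}. The chain of Hausdorff-distance estimates through $y^{-1}Y' \sim Y_i = g_iZ_i$, invoking pairwise finite distances in $\mathscr{Y}$ and $d_{Haus}(Y,H)<\infty$, correctly yields $d_{Haus}(H, yg_iH) < \infty$ and hence $yg_i \in Comm_G(H)$ by Lemma \ref{comm_char}.

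One small cleanup: your $\mathscr{F}$ is defined by the condition $e \in gZ$, so the argument as written handles a point $p \in Y'$ by picking a nearby \emph{vertex of $Y'$}; passing to $R_1 = M+1$ only works if every point of $Y'$ is within distance $1$ of a vertex belonging to $Y'$, which holds when the members of $\mathscr{Y}$ are subgraphs (as in Theorem \ref{subgroup_thm}) but is not automatic for arbitrary subsets. This is easily repaired by enlarging the membership condition to ``$gZ$ meets $\overline{B}_1(e)$'': then for any point $p \in Y'$ and nearest vertex $y$ of $\mathscr{C}(G)$ (not assumed to lie in $Y'$), one has $y^{-1}Y' \in \mathscr{F}$, and Lemma \ref{seq_is_fin} applies unchanged since its constant $x_1$ is independent of the radius $s$ of the common ball. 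Also, since the paper's $B_s(v)$ is an open ball, you should cite $B_1(e)$ rather than $\overline{B}_0(e)$ when invoking that lemma, though this is cosmetic.
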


This result, together with Theorem \ref{main_thm} and a further assumption about the noncrossing condition being satisfied, imply the quasi-isometry invariance of the commensurizers of the subgroups under discussion:

\begin{cor}\label{comm_cor}
Let $G,G', H, H'$ and $f$ be as in Theorem \ref{main_thm}.
Let $f'$ be a quasi-inverse to $f$, and suppose there is some $R_0 >0$ such that, for all $R> R_0$, both $$\{ gN_R(f'(c'H'))\}_{g \in G, c' \in Comm_{G'}(H')}$$ and $$\{ g' N_R(f(cH))\}_{g' \in G', c \in Comm_G(H)}$$ satisfy the noncrossing condition.

Then $$d_{Haus}(Comm_{G'}(H'), f(Comm_G(H)))<\infty.$$
\end{cor}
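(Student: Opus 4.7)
The plan is to apply Proposition \ref{comm_prop} twice, once using $f$ and once using $f'$, to establish the two inclusions whose conjunction gives $d_{Haus}(Comm_{G'}(H'), f(Comm_G(H))) < \infty$.

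For the forward inclusion $f(Comm_G(H)) \subseteq N_{R_1}(Comm_{G'}(H'))$, I fix $R > R_0$ large enough that $Y := N_R(f(H))$ satisfies the deep, shallow, 3--separating and noncrossing conditions (this is the setup already used in the proof of Theorem \ref{main_thm}). Consider the family $\mathscr{Y} := \{N_R(f(cH)) : c \in Comm_G(H)\}$ in $\mathscr{C}(G')$. By Lemma \ref{comm_char}, each coset $cH$ is a finite Hausdorff distance from $H$ in $\mathscr{C}(G)$, and applying the quasi-isometry $f$ shows that the members of $\mathscr{Y}$ are pairwise at finite Hausdorff distance in $\mathscr{C}(G')$. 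Because left multiplication by $c$ is an isometry of $\mathscr{C}(G)$ carrying $H$ onto $cH$, each $cH$ satisfies the same deep, shallow and 3--separating parameters as $H$; applying Lemmas \ref{shallow_qi_inv}, \ref{n--sep_qi_inv} and \ref{deep_qi_inv} with the single quasi-isometry $f$ then produces uniform functions $m_0, m_1$ for which every $N_R(f(cH))$ satisfies $deep(m_0)$, $shallow(m_1)$, and is 3--separating. The noncrossing hypothesis on $\{g'N_R(f(cH))\}_{g' \in G', c \in Comm_G(H)}$ is given by assumption. Proposition \ref{comm_prop}, applied to $Y \in \mathscr{Y}$, then produces a subgroup $\widetilde{H}'$ of $G'$ with $d_{Haus}(Y, \widetilde{H}') < \infty$ and $\mathscr{Y} \subseteq N_{R_1}(Comm_{G'}(\widetilde{H}'))$. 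Since $d_{Haus}(H', f(H)) < \infty$, we get $d_{Haus}(H', \widetilde{H}') < \infty$; a short coset-counting argument then shows that two subgroups at finite Hausdorff distance in a Cayley graph are commensurable, and commensurable subgroups share the same commensurizer, so $Comm_{G'}(\widetilde{H}') = Comm_{G'}(H')$. For any $c \in Comm_G(H)$, $f(c) \in f(cH) \subseteq N_R(f(cH)) \in \mathscr{Y} \subseteq N_{R_1}(Comm_{G'}(H'))$, yielding the forward inclusion.

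For the reverse inclusion, I would run the same argument with the roles of $f$ and $f'$, and of $(G,H)$ and $(G',H')$, exchanged, applying Proposition \ref{comm_prop} to the family $\{N_R(f'(c'H')) : c' \in Comm_{G'}(H')\}$ in $\mathscr{C}(G)$ using the second noncrossing hypothesis. This produces a subgroup $\widetilde{H}$ of $G$ with $d_{Haus}(\widetilde{H}, f'(H')) < \infty$ (so $\widetilde{H}$ and $H$ are commensurable, giving $Comm_G(\widetilde{H}) = Comm_G(H)$) and hence $f'(Comm_{G'}(H')) \subseteq N_{R_1'}(Comm_G(H))$. Applying $f$, which is Lipschitz up to additive constants, and using $d(f \circ f', id_{\mathscr{C}(G')}) < \infty$, then gives $Comm_{G'}(H') \subseteq N_{R_2}(f(Comm_G(H)))$ for an appropriate $R_2$.

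The main technical obstacle is ensuring that the deep, shallow and 3--separating parameters hold uniformly across the family $\mathscr{Y}$, which hinges on the observation that left translation is an isometry of a Cayley graph so the translates $cH$ are isometric as subspaces, allowing a single quasi-isometric argument to supply uniform parameters. A secondary subtlety is identifying the subgroup $\widetilde{H}'$ produced by Proposition \ref{comm_prop} with $H'$ up to commensurability; this follows from the standard fact that finite Hausdorff distance between subgroups of a finitely generated group implies commensurability, combined with the elementary observation that commensurable subgroups have the same commensurizer.
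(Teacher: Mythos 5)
Your proof is correct and follows essentially the same route as the paper: fix a large $R$, apply Proposition \ref{comm_prop} to the family $\{N_R(f(cH))\}_{c \in Comm_G(H)}$ to get $f(Comm_G(H)) \subseteq N_{R_1}(Comm_{G'}(H'))$, and then symmetrize with $f'$. You go a bit further than the paper in explicitly handling the fact that Proposition \ref{comm_prop} produces some subgroup $\widetilde{H}'$ near the chosen $Y$ rather than $H'$ itself, and then identifying $Comm_{G'}(\widetilde H')=Comm_{G'}(H')$ via the finite-Hausdorff-distance $\Rightarrow$ commensurable $\Rightarrow$ same-commensurizer chain; the paper glosses over this by simply quoting the proposition with $H'$, but the point you make is genuinely needed and your justification is sound.
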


\begin{proof}
As we saw in the proof of Theorem \ref{main_thm}, there is some $R>R_0$ such that $N_R(f(H))$ satisfies the deep, shallow and 3--separating conditions.
We can further assume that $R$, $m_0$ and $m_1$ are such that for all $g\in G$, $N_R(f(gH))$ is 3--separating and satisfies $deep(m_0)$ and $shallow(m_1)$.

For all $c \in Comm_{G}(H)$, $d_{Haus}(cH, H)<\infty$ and hence the elements of $\{ N_R(f(cH))\}_{c \in Comm_G(H)}$ are of pairwise finite Hausdorff distance.
By Proposition \ref{comm_prop}, there must be some $R_1>0$ such that 
$$\bigcup_{c \in Comm_G(H)}  N_R(f(cH))=N_R(f(Comm_G(H))) \subseteq N_{R_1}(Comm_{G'}(H')).$$

Similarly there are $R', R_1'>0$ such that 
$$\bigcup_{c' \in Comm_{G'}(H')} N_{R'}(f'(c'H'))=N_{R'}(f'(Comm_{G'}(H'))) \subseteq N_{R_1'}(Comm_G(H)).$$
Hence $d_{Haus}(Comm_{G'}(H'), f(Comm_G(H)))<\infty.$
\end{proof}

Corollary \ref{comm_cor} generalizes the main result of \cite{Vav_qlines}, which proves the corollary in the case that $H \cong \mathbb{Z}$, and $G,G'$ are finitely presented. 
(The noncrossing condition is always satisfied in that setting, by Proposition 2.1 of \cite{Papa_qlines}.)

We note that the commensurizer subgroups in Corollary \ref{comm_cor} will in general not be finitely generated.
The corollary, together with Proposition \ref{ci_subgps_are_qi}, implies the following.

\begin{cor}\label{comm_fg}
Let $H$ and $H'$ be as in Corollary \ref{comm_cor}.
Then $Comm_G(H)$ is finitely generated if and only if $Comm_{G'}(H')$ is finitely generated.
\end{cor}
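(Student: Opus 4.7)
The plan is to recognize that Corollary \ref{comm_fg} follows essentially immediately from the two results cited in its statement, so my work is mostly to verify that their hypotheses are satisfied and then to assemble the pieces.

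First I would invoke Corollary \ref{comm_cor} to obtain the key geometric input: under the stated noncrossing assumptions on orbits of uniform neighborhoods of $f(cH)$ and $f'(c'H')$, we have
$$d_{Haus}(Comm_{G'}(H'), f(Comm_G(H))) < \infty.$$
Next I would set $K = Comm_G(H)$ and $K' = Comm_{G'}(H')$, which are subgroups of $G$ and $G'$ respectively (the commensurizer of any subgroup is always a subgroup of the ambient group). This places us exactly in the setting of Proposition \ref{ci_subgps_are_qi}, which takes as input a quasi-isometry $f \co \mathscr{C}(G) \to \mathscr{C}(G')$ together with subgroups of $G$ and $G'$ whose images under $f$ are of finite Hausdorff distance, and concludes that one is finitely generated if and only if the other is.

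Applying Proposition \ref{ci_subgps_are_qi} directly then gives the desired biconditional. The ``main obstacle'' here is more conceptual than technical: one has to notice that the commensurizer, despite often being infinitely generated, still qualifies as an abstract subgroup on which the coarse-geometry machinery (in particular Lemma \ref{inclusion_is_wup} and Proposition \ref{ci_is_qi}, which underlie Proposition \ref{ci_subgps_are_qi}) applies. No special finite-generation hypothesis is needed to get the biconditional, because Proposition \ref{ci_subgps_are_qi} is itself stated as a biconditional. Thus the proof is a one-line deduction, and I would write it as: by Corollary \ref{comm_cor} the subgroups $Comm_G(H)$ and $Comm_{G'}(H')$ are images under a quasi-isometry up to finite Hausdorff distance, so by Proposition \ref{ci_subgps_are_qi} one is finitely generated if and only if the other is (and in that case they are in fact quasi-isometric, though this stronger conclusion is not required here).
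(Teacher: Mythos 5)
Your proof is correct and takes exactly the paper's route: the paper derives the corollary precisely by combining Corollary \ref{comm_cor} (which provides the finite Hausdorff distance between $Comm_{G'}(H')$ and $f(Comm_G(H))$) with Proposition \ref{ci_subgps_are_qi}. You also correctly observe that Proposition \ref{ci_subgps_are_qi} needs no finite-generation hypothesis on the subgroups, since its finite-generation conclusion is a biconditional resting on the coarse-$0$-connectedness characterization.
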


\section{Splittings}\label{splittings_section}

The settings of Theorems \ref{subgroup_thm} and \ref{main_thm} imply the existence of splittings of the ambient groups, as we will see in this section.
It will follow that, assuming the satisfaction of the noncrossing condition as before, a large class of splittings are invariant under quasi-isometries.

For the arguments in this section, we will need to introduce the number of ends of a group, the number of ends of a pair, almost invariant sets, and related notions.
For this we follow Scott and Swarup (see, for instance, \cite{ScottSwarup_intersection_numbers}).

For any locally finite, connected simplicial complex $X$, define the number of ends of $X$ to be 
$$e(X) = \sup \# \{ {\rm infinite \ components \  of \  } (X-K)\},$$
where the supremum is taken over all finite subcomplexes $K$ of $X$.
Thus $e(X)$ may take any value in $\mathbb{Z}_{\geq 0} \cup \{ \infty\}$.

For any finitely generated group $G$, the number of ends of $G$, $e(G)$, is defined to be $e(\mathscr{C}(G))$.
The number of ends is invariant under quasi-isometries, and therefore this definition does not depend on our choice of finite generating sets for $G$.

It is a fact due to Hopf \cite{Hopf_ends} that the number of ends of a finitely generated group can be only $0,1,2$ or $\infty$.
We have that $e(G) = 0$ if and only if $G$ is finite, $e(G)=2$ if and only if $G$ has a finite index $\mathbb{Z}$ subgroup, and by Stallings' Theorem, $G$ splits as an amalgamated free product or HNN extension over a finite subgroup if and only if $e(G) = 2$ or $\infty$.
See \cite{ScottWall} for more details.

\begin{defn}
If $H$ is a subgroup of a finitely generated group $G$, then {\em the number of ends of the pair $(G,H)$}, denoted $e(G,H)$, is defined to be the number of ends of the quotient graph $H \backslash \mathscr{C}(G)$.
\end{defn}

We defined $\tilde{e}(G,H)$ in the previous section, and showed that, though it is defined algebraically, its value can be detected from coarse separation properties of neighborhoods of $H$ in $\mathscr{C}(G)$ (Lemma \ref{coends_and_separating}).
In fact $\tilde{e}(G,H)$ is closely related to $e(G,H)$.
For instance, in \cite{KrophollerRoller}, Kropholler and Roller showed that $e(G,H) \leq \tilde{e}(G,H)$, and that, if $H \lhd G$, then $\tilde{e}(G,H) = e(G,H) = e(G/H)$.

Recall that a subset of $G$ is said to be $H$--finite if it is contained in finitely many cosets $Hg$, and that this condition is equivalent to the subset being contained in some uniform neighborhood of $H$.  
We say that a subset is $H$--infinite if it is not $H$--finite.

We say that subsets $X$ and $Y$ of $G$ are $H$--almost equal if their symmetric difference, $X+Y$, is $H$--finite.
Let $X^*$ denote $(G-X)$.
We will say that $X$ is nontrivial if neither $X$ nor $X^*$ is $H$--finite.
We will say that $X,Y \subseteq G$ are $H$--almost complementary if $X^*$ and $Y$ are $H$--almost equal, and we will say that $X$ is $H$--almost contained in $Y$, denoted $X \stackrel{H}{\subseteq}Y$, if $X \cap Y^*$ is $H$--finite.

\begin{defn}
If $G$ is a group, $H$ is a subgroup of $G$ and $X$ is a subset of $G$, then $X$ is said to be {\em $H$--almost invariant} if $X$ is invariant under the left action of $H$ and represents an element of $(\mathcal{P}(G)/\mathcal{F}_H(G))^G$.
That is, $X$ is $H$--almost invariant if $X=HX$ and $X+Xg$ is $H$--finite for all $g \in G$.
\end{defn}

($H$--almost invariant sets are related to $e(G,H)$ --- in fact we could have defined $e(G,H)$ as we did $\tilde{e}(G,H)$, replacing $(\mathcal{P}(G)/\mathcal{F}_H(G))^G$ with equivalence classes of $H$--almost invariant sets.)

We now turn our attention to group splittings.
Recall that a group $G$ is said to split over a subgroup $H$ if $G$ can be written as an amalgamated free product $A*_HB$, with $A \neq H \neq B$, or as an HNN extension $A*_H$.
Equivalently, $G$ splits over $H$ whenever $G$ acts on a simplicial tree $T$ without edge inversions or any proper $G$--invariant subtrees, and with $H$ the stabilizer of some edge of $T$.
(See, for instance, \cite{Serre_trees_orig} or \cite{ScottWall}.)

The numbers of ends of pairs are related to splittings by \cite[Lemma 8.3]{ScottWall}, which shows that if $G$ is a finitely generated group that splits over a subgroup $H$ then $e(G,H) \geq 2$.
(The converse to this statement is false in general.  Understanding when the existence of a subgroup $H$ of $G$ such that $e(G,H) \geq 2$ implies the existence of a splitting of $G$ is an important question, on which much work has been done.
See \cite{Wall_survey} for a survey.)

As $e(G,H) \leq \tilde{e}(G,H)$, we have the following.

\begin{prop}\label{splitting_coends}
If $G$ is a finitely generated group that splits over $H$, then $\tilde{e}(G,H) \geq 2$.
\end{prop}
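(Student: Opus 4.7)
My plan is to deduce this proposition directly from the two facts already cited in the paragraphs immediately preceding its statement, so the ``proof'' is essentially a two-step chain. First I would invoke Lemma 8.3 of \cite{ScottWall}: since $G$ is finitely generated and splits over $H$, we have $e(G,H) \geq 2$. Second, I would apply the inequality $e(G,H) \leq \tilde{e}(G,H)$ established by Kropholler and Roller in \cite{KrophollerRoller} and mentioned earlier in this section. Chaining these two inequalities yields $\tilde{e}(G,H) \geq 2$, as desired.

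Because the proof reduces to composing two previously cited results, there is no substantial obstacle. The only points worth checking are bookkeeping: that the splitting over $H$ in our sense (either $A *_H B$ with $A \neq H \neq B$, or an HNN extension $A *_H$) matches the hypothesis of the Scott--Wall lemma, and that Kropholler and Roller's comparison $e(G,H) \leq \tilde{e}(G,H)$ applies to any subgroup $H$ of the finitely generated group $G$ (it does, with no normality or finite-generation assumption on $H$). If one wanted a more self-contained argument, one could alternatively produce two nontrivial, $H$--almost complementary $H$--almost invariant subsets directly from the action of $G$ on the Bass--Serre tree of the splitting, by taking preimages of the two half-spaces determined by an edge stabilized by $H$; these sets represent linearly independent elements of $(\mathcal{P}(G)/\mathcal{F}_H(G))^G$ and so witness $\tilde{e}(G,H) \geq 2$ directly. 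But given that both cited facts are already in hand, the first route is the cleanest.
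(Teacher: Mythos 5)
Your proposal is correct and is essentially identical to the paper's argument: the proposition is stated immediately after the paper recalls both that $e(G,H) \geq 2$ for any splitting over $H$ (citing Scott--Wall, Lemma 8.3) and that $e(G,H) \leq \tilde{e}(G,H)$ (citing Kropholler--Roller), and the result is presented as an immediate consequence of chaining these two facts.
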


\begin{defn}
Suppose that $G$ admits a splitting over a subgroup $H$.  
We say that this splitting {\em has three coends} if $\tilde{e}(G,H) \geq 3$.
\end{defn}

The subgroup $H$ associated to a splitting of $G$ is well-defined up to conjugacy.
So to see that this definition makes sense, note that any inner automorphism of $G$ is a quasi-isometry, so Lemmas \ref{n--sep_qi_inv} and \ref{coends_and_separating} imply that $\tilde{e}(G,H) = \tilde{e}(G,gHg^{-1})$ for any $g \in G$.

Our work in the previous sections will imply the existence and quasi-isometry invariance of splittings with three coends, under the appropriate hypotheses.
Thus it is relevant to investigate these types of splittings --- we provide a characterization in Theorem \ref{etilde_equals_2} below.
Before stating this result, we must define the notion of interlaced cosets.


\begin{defn}
Let $H$ be an infinite index subgroup of a finitely generated group $G$.  Then we say that $H$ {\em has interlaced cosets in $G$} if, for every $r>0$ and every pair of deep components $D, D'$ of $\mathscr{C}(G)-N_r(H)$ there is a sequence $g_1, \ldots , g_n \in G$ such that $g_iH$ meets components $C_i, C_i'$ of the complement of $N_r(H)$, $C_1 = D$, $C_n' = D'$ and $C_i' = C_{i+1}$ whenever $1 \leq i < n$.
\end{defn}

Note that in particular if $\tilde{e}(G,H)= 1$, i.e. no uniform neighborhood of $H$ in $G$ is 2-separating, then $H$ has interlaced cosets.
If $\tilde{e}(G,H)>1$ and if some uniform neighborhood of $H$ satisfies the noncrossing condition, then $H$ does not have interlacing cosets in $G$.

\begin{rmk}
Suppose that $H$ has interlaced cosets in $G$.
As $H$ satisfies the shallow condition, it follows that, for any $r,r'>0$ and given any pair $D, D'$ of components of $(\mathscr{C}(G)-N_r(H))$, there is a sequence $g_1, \ldots, g_n$ and components $C_i, C_i'$ as above, such that each $g_i$ meets $C_i$ and $C_i'$ outside of the $r'$--neighborhood of $N_r(H)$.
\end{rmk}


Our characterization of when certain types of splittings have three coends is the following.

\begin{thm}\label{etilde_equals_2}
Let $G$ be a finitely generated group that splits over a subgroup $H$.
Assume that the vertex groups of the splitting are finitely generated, and that for all infinite index subgroups $K$ of the inclusion(s) of $H$ into $G$, $\tilde{e}(G,K)=1$.

Suppose that the splitting is of the form $G=A*_HB$.
Then it has three coends if and only if none of the following hold:
\begin{itemize}
\item{$[A:H]=[B:H]=2$,}
\item{$[A:H]=2$, $Comm_B(H)=H$ and $H$ has interlaced cosets in $B$, or with the roles of $A$ and $B$ reversed, or}
\item{$Comm_A(H) = Comm_B(H) = H$ and $H$ has interlaced cosets in $A$ and $B$.}
\end{itemize}

Suppose instead that $G=A*_H$.
Let $t$ denote the stable letter, let $i_1, i_2 \co H \hookrightarrow A$ be the associated inclusions, and let $H_1=i_1(H)$ and $H_2=i_2(H)$.
Then the splitting has three coends if and only if none of the following hold:
\begin{itemize}
\item{$H_1=A=H_2$, or}
\item{$Comm_A(H_1)=H_1$, $Comm_A(H_2)=H_2$, $t \notin Comm_A(H_1)$ (or equivalently $t \notin Comm_A(H_2)$) and both $H_1$ and $H_2$ have interlaced cosets in $A$.}
\end{itemize}
\end{thm}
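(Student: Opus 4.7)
The plan is to reduce the computation of $\tilde{e}(G,H)$ to a combinatorial count of deep components of $\mathscr{C}(G) - N_r(H)$ via Lemma \ref{coends_and_separating}, and to organize that count using the Bass--Serre tree $T$ of the splitting. The hypothesis that $\tilde{e}(G,K) = 1$ for every infinite-index subgroup $K$ of $H$ guarantees via Lemma \ref{deep_for_subgroups} that $H$ satisfies the deep condition, while Lemma \ref{shallow_for_subgroups} makes the shallow condition automatic. Consequently, for $r$ sufficiently large, the deep/shallow dichotomy is stable and the candidate deep components match the ``branches'' of the tree of Cayley graphs.

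Consider first $G = A *_H B$. The identity edge $eH$ of $T$ joins $eA$ and $eB$, and $\mathscr{C}(G)$ has the structure of a tree of Cayley graphs, with copies of $\mathscr{C}(A)$ and $\mathscr{C}(B)$ glued along copies of $\mathscr{C}(H)$ as dictated by $T$. After deleting $N_r(H)$, the candidate deep components fall into two families: one per branch of $T$ hanging off $eA$ (indexed by cosets $aH$ with $a \in A \setminus H$) and one per branch hanging off $eB$. The $H$-action permutes branches by left multiplication, so we count $H$-orbits, i.e. double cosets $H \backslash A/H \setminus \{H\}$ and $H \backslash B/H \setminus \{H\}$. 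Two corrections intervene: a branch beginning with $aH$ is absorbed into $N_r(H)$ whenever $a \in Comm_A(H) \setminus H$, since then $d_{Haus}(aH, H) < \infty$ by Lemma \ref{comm_char}; and distinct $H$-orbits of branches on the $A$-side merge into a single deep component of $\mathscr{C}(G) - N_r(H)$ precisely when $H$ has interlaced cosets in $A$, because $\mathscr{C}(A) - N_r(H)$ then coarsely connects the relevant regions. Combining these, the $A$-side contributes exactly one deep component iff $[A:H] = 2$ (only one branch), or $Comm_A(H) = H$ together with $H$ having interlaced cosets in $A$ (every branch is distinct and unabsorbed, but they all merge). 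The symmetric statement holds for the $B$-side, and $\tilde{e}(G,H) \geq 3$ is equivalent to at least one side contributing at least two deep components --- which is precisely the negation of the three degeneracies listed in the theorem.

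The HNN case $G = A *_H$ is parallel: the tree $T$ has one vertex orbit $G/A$ and one edge orbit $G/H$, with the identity edge running from $eA$ to $tA$. Both sides of the removed edge attach to copies of $\mathscr{C}(A)$, glued along $H_1 = i_1(H)$ and $H_2 = i_2(H)$ respectively. Absorption into $N_r(H)$ of branches on the $H_j$-side is governed by $Comm_A(H_j)$, and merger of branches within each $\mathscr{C}(A)$-space by the interlaced cosets property of $H_j$ in $A$. The condition $t \notin Comm_A(H_1)$ (equivalently $t \notin Comm_A(H_2)$ since $tH_1t^{-1} = H_2$) is what prevents the two sides of the central edge from being at finite Hausdorff distance from one another, and hence forces them to contribute independently. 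Matching cases, the two listed HNN degeneracies are exactly those in which the total count of deep components falls below three.

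The main obstacle is to pin down rigorously the bijection ``deep component of $\mathscr{C}(G) - N_r(H)$ $\leftrightarrow$ $H$-orbit of branch of $T - eH$, modulo commensurizer-absorption and interlacing-merger'', and to confirm that the algebraic conditions in the theorem really do implement these two corrections. One must show that every unabsorbed and uninterlaced branch remains deep (not shallow) in $\mathscr{C}(G) - N_r(H)$, that commensurable cosets are absorbed for $r$ large enough, and that interlaced cosets produce the advertised merging paths in $\mathscr{C}(A) - N_r(H)$ (or in $\mathscr{C}(A)$ in the HNN case). The first two items use the deep and shallow conditions essentially; the third follows by unwinding the definitions of $Comm$ (via Lemma \ref{comm_char}) and of interlaced cosets, and showing that these geometric notions are compatible under the tree-of-spaces decomposition.
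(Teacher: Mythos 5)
Your blueprint captures the right intuition for the forward direction --- exhibiting at least three deep complementary components whenever none of the degeneracies hold --- and the paper does indeed set things up with the tree-of-spaces picture, normal forms, and the special almost invariant set $X_B$ (resp. $X_1$ in the HNN case). But the heart of the theorem is the converse: that $\tilde e(G,H)=2$ in the degenerate cases. This is precisely where your proposal says ``the main obstacle is to pin down rigorously the bijection$\dots$'' and does not resolve it. That gap is genuine, and the paper closes it not by counting deep components at all but by a separate mechanism: Propositions \ref{prop1}--\ref{prop3} show that \emph{every} $H$-almost invariant subset $X$ of $G$ is $H$-almost equal to one of $X_B$, $X_B^*$, $G$, or $\emptyset$ (equivalently, $\{X_B, G\}$ generates $(\mathcal{P}G/\mathcal{F}_H(G))^G$ over $\mathbb{F}_2$). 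This is a classification of almost invariant sets, not a count of components for a single scale.

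Your framing in terms of deep components also has a structural issue for the upper bound. Lemma \ref{coends_and_separating} says $\tilde e(G,H)\geq n$ iff \emph{some} $N_R(H)$ is $n$-separating, so to conclude $\tilde e(G,H)<3$ one must control the number of deep components of $\mathscr{C}(G)-N_R(H)$ for \emph{all} $R$, not for a single well-chosen one; the count is not a priori stable as $R$ grows. The paper sidesteps this by taking an arbitrary representative $X$ of an element of $(\mathcal{P}G/\mathcal{F}_H(G))^G$, locating its coboundary in $N_r(H)$ for whatever $r$ works for that $X$, and then running a careful recursive argument over the cosets $b_1 a B$, $b_2 a b_1 a B$, etc., using the interlacing hypothesis and $Comm_B(H)=H$ to force $X\cap X_B$ (or $X^*\cap X_B$) to be $H$-finite. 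Your proposal's ``merging paths in $\mathscr{C}(A)-N_r(H)$'' is the right instinct, but the paper's merging argument actually has to pass through adjacent translates $ba\overline{B}$ (and later $b_1a\overline{B}$, $b_2ab_1a\overline{B}$, $\dots$) and uses the deep condition quantitatively; interlacing within a single vertex space alone does not suffice. The Bass--Serre heuristic and the absorption/merger corrections you name are the right ingredients, but without the almost-invariant-set classification your proposal does not constitute a proof of the ``only if'' direction, which is the bulk of the theorem.
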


(\cite[Theorem 3.7]{Houghton} is a characterization of a splitting of $G$ over $H$ having three coends, under the assumption that $H$ has infinite index in its normalizer.)

For an example of a splitting that is one of the ``interlacing coset types'' above, and hence $\tilde{e}(G,H)=2$, consider the following.
Let $\Sigma$ be a closed surface of genus at least two, let $\gamma$ be a closed curve in $\Sigma$ that is homotopically nontrivial and has positive self-intersection number, and let $\Sigma'$ denote two copies of $\Sigma$, identified along $\gamma$.
Let $G = \pi_1(\Sigma')$, let $A$ and $B$ denote subgroups corresponding to the two copies of $\Sigma$ in $\Sigma'$ and let $H$ be an infinite cyclic subgroup that is induced by $\gamma \co S^1 \to \Sigma'$.
Then $Comm_A(H) = Comm_B(H) =H$, and $H$ has interlaced cosets in $A$ and $B$.

A similar construction is given by replacing $\Sigma$ above with a closed hyperbolic 3--manifold $M$, and letting $\gamma$ denote any closed curve in $M$ that is homotopically nontrivial.  Let $G$ be the fundamental group of two copies of $M$ identified along $\gamma$, let $A$ and $B$ denote subgroups corresponding to the two copies of $M$ and let $H\cong \mathbb{Z}$ correspond to $\gamma$.
Again $Comm_A(H) = Comm_B(H) = H$, and $\tilde{e}(A,H) = \tilde{e}(B,H)=1$, so $H$ has interlaced cosets in $A$ and $B$ here as well.

In the next proofs, we will use the following notation.
If $Y$ is a connected subset of $\mathscr{C}(G)$, $X \subseteq Y$ and $r \geq 0$, then we will write $N_r(X,Y)$ to denote the $r$--neighborhood of $X$ in $Y$, with respect to the induced path metric for $Y$.
We will let $N_r(X)$ denote the $r$--neighborhood of $X$ in $\mathscr{C}(G)$.
For any subset $Z$ of $G$, let $\overline{Z}$ denote the subgraph of $\mathscr{C}(G)$ consisting of $Z$ together with all edges that have both vertices contained in $Z$.

\begin{proof}[Proof of Theorem \ref{etilde_equals_2}]
This proof will make use of Lemma \ref{coends_and_separating} and Propositions \ref{prop1}, \ref{prop2} and \ref{prop3} below.

Assume first that $G= A*_HB$.
It suffices to take the associated generating set for $G$ to be the union of the finite generating sets associated to $A$ and $B$.
Thus for all $g \in G$ and $C \in \{A,B\}$, $g\overline{C} = \overline{(gC)}$ is simplicially isomorphic to $\mathscr{C}(C)$.

Let $T_A, T_B$ be sets of transversals for $H$ in $A$ and $B$ respectively, so $T_A$ contains exactly one representative of each coset $aH$ of $H$ in $A$, with $e$ representing $H$, and similarly for $T_B$.
It is shown in \cite{ScottWall} that each element of $G$ has a unique representation of the form
$$a_1b_2a_3b_4\cdots b_nh,$$
where $n \geq 2$, $h \in H$ and each $a_i \in T_A$ and $b_i \in T_B$, with $a_i=e$ only if $i=1$ and $b_i=e$ only if $i=n$.
In the following, when we write $a_1b_2\cdots b_n$ or $a_1b_2\cdots a_{n-1}$, it will be understood that these are subwords as the notation indicates of words in this normal form.
Hence $G$ is the union of the cosets of the form $a_1b_2a_3\cdots b_nA$ and $a_1b_2 \cdots a_{n-1}B$.

The action of $G$ on the associated Bass-Serre tree allows one to see that $A \cap B = H$.
Similarly one can see that, of all the cosets $a_1b_2a_3\cdots b_nA$ and $a_1b_2 \cdots a_{n-1}B$, $A$ meets precisely those equal to $aB$ for some $a \in T_A$, in $aH$, and $B$ meets precisely the cosets $bA$, for $b \in T_B$, in $bH$.
In addition, each $a_1b_2a_3\cdots a_{n-1}b_nA$ meets precisely the cosets $a_1b_2a_3\cdots a_{n-1}b_naB$, $a \in T_A$, in $a_1b_2a_3\cdots b_naH$, and $a_1b_2a_3\cdots a_{n-1}B$ meets precisely all $a_1b_2a_3\cdots a_{n-1}bA$ such that $b \in T_B$, in $a_1b_2a_3\cdots a_{n-1}bH$.

The Bass-Serre tree encodes all these intersections, in the following sense.
One edge of the tree has vertices say $v_A$ and $v_B$, with stabilizers $A$ and $B$ respectively, and the cosets of $A$ ($B$ respectively) translate $v_A$ ($v_B$ respectively) to the different vertices in its orbit.
Two vertices are adjacent precisely when the corresponding cosets intersect.
Since our generating set for $G$ is the union of the generating sets for $A$ and $B$, note that two vertices in $\mathscr{C}(G)$ are connected by an edge only if they are in the same coset of $A$ or $B$.

It follows from all of this that $H$ separates $A$ from $B$ in $\mathscr{C}(G)$, and similarly for any $g \in G$, $gH$ separates $gA$ from $gB$.
In particular, let $X_B = \{ a_1b_2a_3\cdots b_nh : a_1 = e, n>2\}$ and note that $X_B$ is precisely the vertex set of the component(s) of $\mathscr{C}(G)-H$ that meet $B$.
Then $X_B^* = (H \cup \{  a_1b_2a_3\cdots b_nh : a_1 \neq e\})$ is the union of $H$ with the vertices of the components that meet $A$.
It follows that $\delta X_B \subseteq N_1(H)$, so $X_B$ represents an element of $(\mathcal{P}G/\mathcal{F}_H(G))^G$.  

We claim that $X_B$ is nontrivial.
For it follows from \cite{ScottWall} that each $g \in G$ has a unique ``reversed'' normal form $ha_1b_2\cdots b_n$ for $h \in H$ and $a_1b_2\cdots b_n$ as above.
Hence the elements of the form $a_1b_2a_3\cdots b_n$ are contained in distinct cosets $Hg$, and so both $X_B$ and its complement are $H$--infinite.
Thus $X_B$ is nontrivial and so we recover that $\tilde{e}(G,H) \geq 2$.
\\

Our argument below will be ordered as follows, and we note that all results mentioned will be symmetric in $A$ and $B$.
We will begin by noting the standard result that $\tilde{e}(G,H)=2$ if $[A:H]=[B:H]=2$ (thus $\tilde{e}(A,H)=\tilde{e}(B,H)=0$).
If $3 \leq [A:H] <\infty$ (so again $\tilde{e}(A,H)=0$), then we will see that $\tilde{e}(G,H) \geq 3$.
Next, we will show that $\tilde{e}(G,H) \geq 3$ if $\tilde{e}(A,H)>0$ and $Comm_A(H) \neq H$, or if $H$ does not have interlaced cosets in $A$ (hence $\tilde{e}(A,H)>1$).
We will refer to Proposition \ref{prop1} below to see that $\tilde{e}(G,H)=2$ if $\tilde{e}(A,H)=0$ with $[A:H]=2$, $Comm_B(H)=H$ and $H$ has interlaced cosets in $B$, and finally Proposition \ref{prop2} below gives $\tilde{e}(G,H)=2$ if $Comm_A(H) = Comm_B(H) = H$ and $H$ has interlaced cosets in $A$ and $B$.
This will exhaust all possible cases when $G = A*_HB$.

If $[A:H]=[B:H]=2$ (so $\tilde{e}(A,H) = \tilde{e}(B,H) = 0$), then it is well-known that $\tilde{e}(G,H)=2$.  
Indeed, it follows that $H$ is normal in $A$ and $B$, hence in $G$, and so $G/H \cong \mathbb{Z}_2*\mathbb{Z}_2$, which is two-ended.
It is shown in \cite{KrophollerRoller} that if $H$ is normal in $G$, then $\tilde{e}(G,H) = e(G,H) = e(G/H)$, thus $\tilde{e}(G,H) = 2$ in this case.

For our next case, we consider $\tilde{e}(A,H)=0$ with $[A:H] \geq 3$, and let $e,a,a' \in T_A$ be distinct.
Recall that $\tilde{e}(A,H)=0$ if and only if $[A:H]<\infty$, or equivalently $A \subseteq N_r(H)$ for some $r \geq 0$.
Fix some such $r$ and it follows that $N_r(H)$ contains $aH$ and $a'H$.
The unions of the components of $(\mathscr{C}(G)-A)$ that meet each of $B, aB$ and $a'B$ are $X_B, aX_B$ and $a'X_B$ respectively hence they are not contained in any uniform neighborhood of $H$.
As $H$ satisfies the shallow condition, the complement of $N_r(H)$ has a deep component contained in each of these, and hence $\tilde{e}(G,H) \geq 3$.
We similarly get this conclusion if $\tilde{e}(B,H)=0$ and $[B:H] \geq 3$.

Suppose next that $\tilde{e}(A,H)>0$, i.e. $[A:H]=\infty$, and suppose that $Comm_A(H)\neq H$.
Let $a \in Comm_A(H) \cap (T_A-\{ e\})$, and let $r >0$ be such that $aH \subseteq N_r(H)$.
As we saw in the previous case, there are two deep components of $(\mathscr{C}(G)-N_r(H))$ that are contained in $X_B$ and $aX_B$ respectively, and neither meets $A$.
As $[A:H]=\infty$, $A$ contains infinitely many cosets $Ha'$ of $H$.
Since $H$ satisfies the shallow condition, it follows that $A$ meets another deep component of the complement of $N_r(H)$.
So $N_r(H)$ is 3-separating and $\tilde{e}(G,H) \geq 3$ in this case too, and similarly if $\tilde{e}(B,H)>0$ and $Comm_B(H)\neq H$.

If $H$ does not have interlacing cosets in $A$, so $\tilde{e}(G,H)>1$, then there is some $r \geq 0$ such that $(\overline{A}-N_{r}(H, \overline{A}))$ has two deep components that are not connected by a sequence of cosets of $H$, as in the definition of interlacing cosets.
Thus the components of $(\overline{A}-N_{r}(H, \overline{A}))$ can be partitioned into two sets, $\mathscr{D}_1$ and $\mathscr{D}_2$, such that each contains at least one deep component, and no coset $aH$ meets components in both $\mathscr{D}_1$ and $\mathscr{D}_2$.
Note that there is an $H$--infinite component $C_i$ of $(\mathscr{C}(G)-N_r(H))$ that meets each $\mathscr{D}_i$, and it follows from the definition of the $\mathscr{D}_i$'s and the ``tree'' configuration of the cosets of $A,B$ and $H$ in $G$ that $C_1 \cap C_2 = \emptyset$.
Moreover, $B$ meets an $H$--infinite component of $(\mathscr{C}(G)-H)$ that is disjoint from $A$, and this component must contain a deep component of the complement of $N_r(H)$, that is disjoint from $C_1$ and $C_2$.
It follows that $N_r(H)$ is 3-separating, so $\tilde{e}(A,H) \geq 3$.

By Proposition \ref{prop1} below, we have that if $[A:H]=2$ (so $\tilde{e}(A,H)=0$), $Comm_B(H)=H$ and $H$ has interlacing cosets in $B$, then $\tilde{e}(G,H)=2$, and similarly if we exchange the roles of $A$ and $B$.

Finally, Proposition \ref{prop2} shows that $\tilde{e}(G,H)=2$ if $Comm_A(H) = Comm_B(H) = H$ and $H$ has interlaced cosets in $A$ and $B$.
This completes the proof of the theorem in the case that $G=A*_HB$.
\\

Now let us consider the HNN extension case, so
$$G=A*_H = (A*\langle t \rangle ) / N,$$
where $N$ denotes the normal closure of $\{ i_2(h)^{-1}t^{-1}i_1(h)t : h \in H\}$ in $(A *\langle t\rangle)$.
Let $H_k = i_k(H)$ for $k=1,2$, so $H_2 =t^{-1}H_1t $ and hence $\tilde{e}(G,H_1) = \tilde{e}(G,H_2)$.
Thus the splitting $G = A*_H$ has three coends if and only if $\tilde{e}(G,H_1)=\tilde{e}(G,H_2) \geq 3$.
The argument that follows will differ from the amalgamated free product case in subtle ways.

We shall take the generating set for $G$ to be the union of $t$ with a finite generating set for $A$.
Thus $\mathscr{C}(A)\cong \overline{A} \cong g\overline{A}=\overline{(gA)}$ for any $g \in G$.
Let $T_k$ be a set of transversals for $H_k$ in $A$, so $T_k$ contains exactly one representative of each coset $aH_k$ of $H_k$ in $A$, and we take $e$ to represent $H$.

Scott and Wall \cite{ScottWall} show that any $g \in G$ has a unique normal form 
$$a_1t^{\epsilon_1}a_2t^{\epsilon_2}\cdots a_nt^{\epsilon_n}a_{n+1},$$
where $n \in \mathbb{Z}_{\geq 0}$, $a_{n+1} \in A$, and for all $k\leq n$, $\epsilon_k = \pm 1$, $a_k \in T_1$ if $\epsilon_k=1$, $a_k \in T_2$ if $\epsilon_k=-1$, and $a_k \neq e$ if $k>1$ and $\epsilon_{k-1} \neq \epsilon_k$.
When we write $a_1t^{\epsilon_1}\cdots a_nt^{\epsilon_n}a_{n+1}$ or $a_1t^{\epsilon_1}\cdots a_nt^{\epsilon_n}$ in the following, we shall assume that the words are of this form.
Thus $G$ is the disjoint union of the cosets of $A$ of the form $a_1t^{\epsilon_1}\cdots a_nt^{\epsilon_n}A$.

Let us consider which $a_1t^{\epsilon_1}\cdots a_nt^{\epsilon_n}$ are such that $\overline{A}$ is connected to $a_1t^{\epsilon_1}\cdots a_nt^{\epsilon_n}\overline{A}$ by an edge.
The edges meeting $A$ are of the form $[a,as]$, for $a \in A$ and $s \in A$ or $s=t^{\pm 1}$.
If $s \in A$ then $[a,as] \subseteq \overline{A}$.
If $s=t^{-1}$ then write $a=a'h$ for $a' \in T_2$ and $h\in H_2$, and then $at^{-1} = a'ht^{-1} = a't^{-1}h'$, where $h' \in H_1$.
Note that $a't^{-1}h'$ is in normal form, so $a'H_2 \subseteq \overline{A}$ is connected to $a't^{-1}H_1\subseteq a't^{-1}\overline{A}$ by an edge in $\mathscr{C}(G)$, for any $a' \in T_2$.
If $s=t$ then write $a = a'h$ for $a' \in T_1$ and $h \in H_1$, and similarly $at = a'th'$ for $h' \in H_2$, which is also the normal form.  
Thus $a'H_1 \subseteq \overline{A}$ is also connected to $a'tH_2 \subseteq a't\overline{A}$ by an edge, for any $a' \in T_1$.
Hence $\overline{A}$ is connected by single edges precisely to $a't^{-1}\overline{A}$, $a' \in T_2$, and $a't\overline{A}$, $a' \in T_1$.

Similarly for any $g \in G$, $g\overline{A}$ is connected by single edges to only $ga't^{-1}H_1 \subseteq ga't^{-1}\overline{A}$ through $ga'H_2$ for any $a' \in T_2$ and $gatH_1 \subseteq gat\overline{A}$ through $ga'H_1$ for any $a' \in T_1$.

Note that the Bass-Serre tree associated to $G=A*_H$ encodes this adjacency information, similarly to the amalgamated free product case.
In particular, for any $g \in G$, $N_1(gH_1)$ contains $gH_1$ and $gH_1t = gtH_2$, hence separates $g\overline{A}$ and $gt\overline{A}$ from one another.

Let $X_1 := \{ a_1t^{\epsilon_1}\cdots a_nt^{\epsilon_n}a_{n+1} : a_1=e, \epsilon_1=1\}$.
It follows from our discussion that $\delta X_1 \subseteq N_1(H_1)$, so $X_1$ represents an element of $(\mathcal{P}G/\mathcal{F}H_1)^G$.

Consider any $g=a_1t^{\epsilon_1}\cdots a_nt^{\epsilon_n}a_{n+1}$, and note that if we use transversals $T_k'$ of $H_k\backslash A$, then ``pushing'' elements of $H_1, H_2$ to the left in this word uniquely determines a reversed normal form $g=ha_1't^{\epsilon_1}\cdots a_n't^{\epsilon_n}a_{n+1}'$, where $a_k \in T_2'$ if $\epsilon_{k-1}=1$ and $a_k \in T_1'$ if $\epsilon_{k-1} =-1$.
Hence if the exponents $\epsilon_k$ for two different elements of $G$ do not match, then the elements are in different cosets $H_1(g')$.
In particular, it follows that $X_1$ and its complement are $H_1$--infinite, and so represent nontrivial elements of $(\mathcal{P}G/\mathcal{F}H_1)^G$.
Thus we recover the conclusion of Proposition \ref{splitting_coends} in this case:  $\tilde{e}(G,H_1) \geq 2$.
\\

Our argument will develop as follows.
The statements we give are symmetric in $H_1$ and $H_2$.
First, we will give the standard result that $\tilde{e}(G,H_1)=2$ if $A = H_1 = H_2$.
Next we will show that $\tilde{e}(G,H_1) \geq 3$ if $\tilde{e}(A,H_1)=0$ and $A \neq H_1$.
We shall see that if $A=H_1$, then $\tilde{e}(G,H_2) = 0$, which puts us in one of the two previous cases.
Then we have that $\tilde{e}(G,H_1) \geq 3$ if $\tilde{e}(A,H_1)>0$ and $Comm_A(H_1) \neq H_1$.
The same result follows if $H_1$ does not have interlaced cosets in $A$.
If $Comm_A(H_1) = H_1$, $Comm_A(H_2) = H_2$, $t \notin Comm_G(H_1)$ and both $H_1$ and $H_2$ have interlaced cosets in $A$, then Proposition \ref{prop3} below gives $\tilde{e}(G,H_1) = 2$.
If instead $Comm_A(H_1) = H_1$, $Comm_A(H_2) = H_2$ and $H_1$ and $H_2$ have interlaced cosets in $A$, but $t \in Comm_G(H_1)$ then it follows that $\tilde{e}(G,H_1) \geq 3$.

If $A=H_1=H_2$ then $\tilde{e}(G,H_1)=2$, since in this case $H_1 \lhd G$, with $G/H_1 \cong \mathbb{Z}$ so by \cite{KrophollerRoller}, $\tilde{e}(G,H_1) = e(G/H_1) = e(\mathbb{Z}) = 2$.

Suppose that $\tilde{e}(A,H_1)=0$ and $A \neq H_1$.  
Then there is some nontrivial $a \in T_1$, and note that $tA$, $atA$ and for instance $t^{-1}A$ are all contained in distinct components of $(\mathscr{C}(G)-\overline{A})$.
Each of these components is $H_1$--infinite, for the component containing $tA$ has vertex set $X_1$ defined earlier, and the component containing $atA$ has vertex set $aX_1$, hence contains words with normal forms containing arbitrarily large numbers of ``$t$'s'' and so is $H_1$--infinite by the observation made above.
Finally, the component containing $t^{-1}A$, call it $Y$, also contains all words that have normal form $a_1t^{\epsilon_1}\cdots a_nt^{\epsilon_n}a_{n+1}$ such that $a_1=e$ and $\epsilon_1=-1$, so this set must also be $H_1$--infinite, by the same argument.
Since $\tilde{e}(A,H)=0$, there is some $r>0$ such that $A \subseteq N_r(H_1)$.
Also $H_1$ satisfies the shallow condition in $\mathscr{C}(G)$, and it follows that each of $X_1, aX_1$ and $Y$ contains a deep component of $(\mathscr{C}(G)-N_r(H_1))$.
Thus $N_r(H_1)$ is 3-separating and hence $\tilde{e}(G,H_1) \geq 3$.
Similarly $\tilde{e}(G,H_2) \geq 3$ if $\tilde{e}(A,H_2)=0$ and $A \neq H_2$.

Note that if $A=H_1$ then $H_2\subseteq H_1$.
As $d_{Haus}(H_2=t^{-1}H_1t, t^{-1}H_1)<\infty$, it follows that $t^{-1}H_1$ is contained in some uniform neighborhood of $H_1$.
By Lemma \ref{deep_for_subgroups}, $H_1$ and its translates satisfy the deep condition, so Lemma \ref{cis} implies then that $d_{Haus}(H_1, t^{-1}H_1)<\infty$.
It follows that $d_{Haus}(H_1, H_2)<\infty$, so $[A:H_2]<\infty$.
This puts us in one of the previous cases, and similarly for $A=H_2$.

If $\tilde{e}(A,H_1)>0$ and $Comm_A(H_1) \neq H_1$, then let $a \in Comm_A(H_1)\cap (T_1-\{e\})$, and, as in the amalgamated free product case, we consider a neighborhood $N_r(H_1)$ that contains $aH_1$.
Then $N_r(H_1)$ must separate regions of $\mathscr{C}(G)$ that meet $X_1$, $aX_1$ and $A$.
Since also $[A:H_1]=\infty$ and $H_1$ satisfies the shallow condition, it follows that each of $X_1, aX_1$ and $A$ must meet a deep component of $(\mathscr{C}(G)-N_r(H_1))$, thus $\tilde{e}(G,H_1)\geq 3$.
Naturally the analogous result holds if $H_1$ is replaced by $H_2$.

Suppose that $[A,H_1]=\infty$ and that $H_1$ does not have interlacing cosets in $A$, so in particular $\tilde{e}(A,H_1)>1$.
As in the amalgamated free product case, we can take a neighborhood $N_r(H_1, \overline{A})$ whose complement in $\overline{A}$ contains deep components $D_1, D_2$ that are not connected by a sequence of cosets of $H_1$.
It follows that $(\mathscr{C}(G)-N_r(H_1, \overline{A}))$ contains distinct $H_1$--infinite components that meet $D_1, D_2$ and $tA$, in fact $X_1$, respectively.
Thus $N_r(H_1, \overline{A})$ is contained in a 3-separating uniform neighborhood of $H_1$ in $\mathscr{C}(G)$, and it follows that $\tilde{e}(G,H_1) \geq 3$.
Similarly we get $\tilde{e}(G,H_2) \geq 3$ if $[A,H_2]=\infty$ and $H_2$ does not have interlacing cosets in $A$

It remains to consider the case that $Comm_A(H_1) = H_1$, $Comm_A(H_2)=H_2$ and both $H_1$ and $H_2$ have interlaced cosets in $A$.
Recall that $t \notin A$, so we can consider whether or not $t \in Comm_G(H_1)$.
If $t \notin Comm_G(H_1)$ then Proposition \ref{prop3} below shows that $\tilde{e}(G,H_1)=2$.
If $t \in Comm_G(H_1)$, then $d_{Haus}(H_1, H_2)<\infty$.
Let $r>0$ be such that $H_2 \subseteq N_r(H_1)$, and as in the case that $\tilde{e}(A,H_1)=0$ and $A \neq H_1$, we have that $(\mathscr{C}(G)-N_r(H_1))$ contains a deep component that meets $X_1$, and a deep component that meets $\{ a_1t^{\epsilon_1}\cdots a_{n+1} : a_1=e, \epsilon_1=-1\}$, and neither of these meets $A$.
In addition we have that $[A:H_1]=\infty$, so $(\mathscr{C}(G)-N_r(H_1))$ contains a deep component that has $H_1$--infinite intersection with $A$, hence $\tilde{e}(G,H_1)\geq 3$.
\end{proof}

We now give Propositions \ref{prop1}, \ref{prop2} and \ref{prop3}, which were used in the previous proof.
We give a careful proof of Proposition \ref{prop1} below.  
The other situations are morally the same, and can be proved using similar methods.

\begin{prop}\label{prop1}
Suppose that $G=A*_HB$, with $A$ and $B$ finitely generated, and that, for every infinite index subgroup $K$ of $H$, $\tilde{e}(G,K) = 1$.
If $[A:H]=2$, $Comm_B(H)=H$ and $H$ has interlaced cosets in $B$, then $\tilde{e}(G,H)=2$.
\end{prop}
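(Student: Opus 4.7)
The plan is to show $\tilde{e}(G, H) \leq 2$; the inequality $\tilde{e}(G, H) \geq 2$ is given by Proposition~\ref{splitting_coends}. By Lemma~\ref{coends_and_separating}, this reduces to showing that $N_r(H)$ is not $3$-separating for any $r \geq 0$. Fix $r$ and enlarge it so that $\overline{A} \subseteq N_r(H)$, which is possible since $[A:H]=2$; write $T_A=\{e,a\}$. In the Bass--Serre tree $T$, the vertex fixed by $A$ is incident to only two edges (labeled $H$ and $aH$), so removing the $H$-edge separates $T$ into two half-trees. Via the normal-form decomposition $G = X_B \sqcup H \sqcup aX_B$ together with $\overline{A} \subseteq N_r(H)$, every component of $\mathscr{C}(G)-N_r(H)$ lies entirely in $X_B$ or entirely in $aX_B$. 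Left multiplication by $a$ is an isometry of $\mathscr{C}(G)$ that exchanges the two sides and sends $N_r(H)$ to $N_r(aH) \subseteq N_{r+d_G(e,a)}(H)$; so the count of deep components in $aX_B$ is dominated by the count of deep components in $X_B$ relative to an enlarged neighborhood. It therefore suffices to show that for every $r$ (with $\overline{A} \subseteq N_r(H)$), $X_B$ contains at most one deep component of $\mathscr{C}(G)-N_r(H)$.

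Let $D, D' \subseteq X_B \setminus N_r(H)$ be two deep components; the goal is to produce a path in $\mathscr{C}(G)-N_r(H)$ from $D$ to $D'$. First, each such $D$ must visit some ``$B$-type vertex'' of the $B$-side half-tree, i.e., $D \cap g\overline{B} \neq \emptyset$ for some $gB$ on that side. This holds because each $A$-type vertex subgraph $g\overline{A}$ is contained in a bounded neighborhood of the coset $gH$ (since $[A:H]=2$); meanwhile, by depth of $D$ we may select $p \in D$ at arbitrarily large distance from $H$, sitting in a coset $gH$ or $gaH$, and the $B$-generator edges out of such a $p$ remain well outside $N_r(H)$ and lead into an adjacent $B$-type subgraph $g\overline{B}$ or $ga\overline{B}$.

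Pick $B$-type vertices $u, u'$ visited by $D$ and $D'$. I would induct on the tree-distance $d_T(u, u')$ to reduce to the case $u = u' = g\overline{B}$ for some $g$. Then $D \cap g\overline{B}$ and $D' \cap g\overline{B}$ each contain a deep component of $g\overline{B} \setminus N_r(gH, g\overline{B})$. The interlaced-cosets hypothesis for $H$ in $B$, translated by $g$, yields a chain $C_1, \ldots, C_n$ of deep components of $g\overline{B} \setminus N_r(gH, g\overline{B})$ with $C_1 \subseteq D$, $C_n \subseteq D'$, and consecutive $C_j, C_{j+1}$ joined by a coset $g b_j H$ (with $b_j \in B$) meeting both. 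The inductive step, connecting $B$-type vertices one tree-edge apart via an intermediate $A$-type vertex subgraph, uses $[A:H]=2$ to preclude further branching at the $A$-type vertex.

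The crux is upgrading each coset-level joining to a genuine path in $\mathscr{C}(G)-N_r(H)$ connecting $C_j$ to $C_{j+1}$. For $b_j \in B - H$, the hypothesis $Comm_B(H) = H$ forces $b_j \notin Comm_G(H)$, so $g b_j \overline{A}$ is not contained in any uniform neighborhood of $H$. One then argues that $g b_j \overline{A} \setminus N_r(H)$ is coarsely connected by invoking the hypothesis that every infinite-index subgroup $K \leq H$ satisfies $\tilde{e}(G, K) = 1$: the intersection $N_r(H) \cap g b_j \overline{A}$ corresponds, via left-translation, to a uniform neighborhood of a finite union of cosets of such infinite-index subgroups of $H$, which by the hypothesis cannot coarsely separate the subgraph $g b_j \overline{A}$. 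This last step, requiring the double-coset bookkeeping to identify the relevant infinite-index subgroups and invoke the separation hypothesis, is where I anticipate the main work.
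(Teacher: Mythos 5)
Your framing is the ``dual'' of the paper's: you aim to show via Lemma~\ref{coends_and_separating} that no $N_r(H)$ is $3$--separating, while the paper fixes an arbitrary nontrivial $H$--almost invariant set $X$ and shows it must be $H$--almost equal to $X_B$ or $X_B^*$ (so $\{X_B, G\}$ generates $(\mathcal{P}(G)/\mathcal{F}_H(G))^G$ over $\mathbb{Z}_2$). These are equivalent outlooks, and your reduction to ``$X_B$ contains at most one deep component,'' your use of $[A:H]=2$ to swallow $\overline{A}$ into a neighborhood of $H$, and the idea of marching along an interlaced chain of cosets in $g\overline{B}$ all parallel the paper's argument in spirit. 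However, the crux of the proof --- connecting $C_j$ to $C_{j+1}$ outside $N_r(H)$ --- is where your sketch has a genuine gap.

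You propose to join $C_j$ and $C_{j+1}$ through the $A$--type subgraph $gb_j\overline{A}$, and to argue that $N_r(H) \cap gb_j\overline{A}$ cannot disconnect it because it is (roughly) a uniform neighborhood of a coset of an infinite-index subgroup $K \le H$ with $\tilde{e}(G,K)=1$. This invocation does not do what you need. The condition $\tilde{e}(G,K)=1$ says that no uniform neighborhood of $K$ has two deep complementary components \emph{in $\mathscr{C}(G)$}; it says nothing about $K$ (or its translates) not separating the \emph{subgraph} $gb_j\overline{A}$, which --- since $[A:H]=2$ --- is contained in a bounded tube around a single coset of $H$. A set can perfectly well separate a subspace without separating the ambient space; e.g.\ if $H$ is a free group and $K$ a cyclic factor, $K$ separates $\mathscr{C}(H)$ (hence a tube around it) even if $\tilde{e}(G,K)=1$. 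The relevant invariant would be something like $\tilde{e}(A,K')$ (or internal separation of $H$), not $\tilde{e}(G,K)$, and the hypotheses give no control over that.

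The paper avoids this entirely by \emph{not} routing through $gb_j\overline{A}$. Instead it steps across the $a$--edge from $gb_jH$ to $gb_jaH$ and travels inside the next $B$--type subgraph $gb_ja\overline{B}$. The mechanism that makes this work is the observation that any path from $H$ to $gb_ja\overline{B}$ must pass through both $gb_jH$ and $gb_jaH$, so the interference of $N_r(H)$ (equivalently, $\delta X$) with $gb_ja\overline{B}$ is confined to a neighborhood of $gb_jaH$ of radius roughly $r-2$ --- the obstruction shrinks by two units each time you go one tree-edge deeper. Combined with the deep condition $deep(m_0)$ for $H$ (which is where the infinite-index-subgroup hypothesis actually enters, via Lemma~\ref{deep_for_subgroups}), one can reach around this shrinking obstruction with a ball of the right radius and connect the two sides. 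That ``radius drops by two'' device and the pass into the adjacent $B$--subtree are the missing ideas in your proposal; as written, your connection step would fail.
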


\begin{proof}
Let $T_A = \{ e,a\}$ and $T_B$ be transversals for $H$ in $A$ and $B$ respectively.
As above, it suffices to assume that the finite generating set for $G$ is a union of finite generating sets for $A$ and $B$, and take the generating set for $A$ to be $a$ together with a finite generating set for $H$.
Thus we have $\mathscr{C}(A) \cong \overline{A} \cong g\overline{A}$ and $\mathscr{C}(B) \cong \overline{B} \cong g\overline{B}$ for all $g \in G$.

Recall the subset $X_B$ of $G$ from the proof of Theorem \ref{etilde_equals_2}, which represents a nontrivial element of $(\mathcal{P}G/\mathcal{F}_H(G))^G$, and suppose that $X$ also represents an element of $(\mathcal{P}G/\mathcal{F}_H(G))^G$.
We will show that $X_B \stackrel{H}{\subseteq} X$ or $(X \cap X_B)$ is $H$--finite, and a similar argument shows the analogous result for $X_B^*$.
It follows that $X$ is $H$-almost equal to $X_B$, $X_B^*$, $G$ or $\emptyset$.
Hence $\{ X_B, G\}$ generates $(\mathcal{P}G/\mathcal{F}_H(G))^G$ over $\mathbb{Z}_2$, so $\tilde{e}(G,H) = 2$.

We have that the identity map $(B, d_B) \hookrightarrow (B,d_G)$ is a $(\phi, \Phi)$-uniformly distorting map, for some $\phi$ and $\Phi$.
Our choice of generating sets gives $\mathscr{C}(B) \cong \overline{B}$, so we can take $\Phi$ to be the identity.
For any $r \geq 0$, we have $N_{r}(H,\overline{B}) \subseteq (N_{r}(H) \cap \overline{B})$.
On the other hand, let $\phi' \co \mathbb{R}_{\geq 0} \to \mathbb{R}_{\geq 0}$ be such that $\phi (\phi'(r))>r$, and it follows that, for any $r \geq 0$, $(N_r(H)\cap \overline{B}) \subseteq N_{\phi'(r)}(H, \overline{B})$.
Thus $\phi'(r) \geq r$ for all $r$.
We also have $(N_r(gH)\cap g\overline{B}) \subseteq N_{\phi'(r)}(gH, g\overline{B})$ and $N_{r}(gH,g\overline{B}) \subseteq (N_{r}(gH) \cap g\overline{B})$ for any $g \in G$.

As $X$ is a representative of an element of $(\mathcal{P}G/\mathcal{F}_H(G))^G$, $\delta X$ is $H$--finite, so there is some $r \geq 0$ such that $\delta X \subseteq N_r(H)$ and $(\delta X \cap \overline{B}) \subseteq N_{\phi'(r)}(H,\overline{B})$.
Thus each component of $(\overline{B}-N_{\phi'(r)}(H, \overline{B}))$ is either contained in $\overline{X}$ or is contained in $\overline{X}^*$.

We claim that the deep components of $(\overline{B}-N_{\phi'(r)}(H, \overline{B}))$ are all contained in $\overline{X}$, or are all contained in $\overline{X}^*$.
Since $H$ has interlacing cosets in $B$, it suffices to show that, for a fixed constant $\rho$, any pair of components that meet the same coset $bH$ outside of $N_\rho(H)$ must both be in $\overline{X}$ or both be in $\overline{X}^*$.

Suppose first that $r <2$, and that $bH$ meets components $C,C'$ of $(\overline{B}-N_{\phi'(r)}(H, \overline{B}))$ outside of $N_{r+1}(H)$, in points $g_1$ and $g_2$ respectively.
Then $g_ia \in bHa = baH$, and $g_ia$ is connected to $g_i$ by an edge that is not in $\delta X\subseteq N_r(H)$ for each $i$.

Recall that $\overline{B}$ and $ba\overline{B}$ are disjoint and separated by $bA = (bH \coprod baH)$, hence any path from $H$ to $baH$ (or to $baB$) must pass through $bH$.
It follows that any pair of points in $H$ and $baH$ respectively are of distance at least two from one another.
Hence $N_r(H)$ is disjoint from $baH$, and so $\delta X$ is disjoint from $ba\overline{B}$.
Therefore $ba\overline{B}$ is contained in $\overline{X}$ or $\overline{X}^*$.
Since the edges $[g_i, g_ia]$ connect $C, C'$ to $ba\overline{B}$ without meeting $\delta X$, it follows that $ba\overline{B} \subseteq \overline{X}$ implies that $C\cup C' \subset \overline{X}$, and $ba\overline{B} \subseteq \overline{X}^*$ implies that $C\cup C' \subset \overline{X}^*$.
Hence the deep components of $(\overline{B}-N_{\phi'(r)}(H, \overline{B}))$ are all contained in $\overline{X}$ or are all in $\overline{X}^*$ in this case, as desired.

Suppose instead that $r \geq 2$.
Let $\rho = (r+m_0(\phi'(r-2))+1)$ and suppose that $bH$ meets components $C$ and $C'$ of $(\overline{B}-N_{\phi'(r)}(H, \overline{B}))$ outside of $N_{\rho}(H)$, in points $g_1$ and $g_2$ respectively.
Thus, for each $i$, $B_{m_0(\phi'(r-2))+1}(g_i)$ is disjoint from $\delta X \subseteq N_r(H)$.
We have that $g_ia \in baH$ is of distance 1 from $g_i$, so $B_{m_0(\phi'(r-2))}(g_ia)$ also is disjoint from $\delta X$.
Using that $H$ satisfies $deep(m_0)$, it follows that there is a path from each $g_i$ to any deep component $D$ of $(ba\overline{B}-N_{\phi'(r-2)}(baH))$, that does not meet $\delta X$.

Recall that any point of $baB$ is of distance at least two from $H$.
Hence $(\delta X \cap ba\overline{B}) \subseteq (N_{r-2}(baH)\cap ba\overline{B}) \subseteq N_{\phi'(r-2)}(baH, ba\overline{B})$.
Thus $D$ does not meet $\delta X$, and it follows that $g_1$ can be connected to $g_2$ by a path that does not meet $\delta X$.
Therefore $C \subseteq \overline{X}$ if and only if $C' \subseteq \overline{X}$, so since $H$ has interlacing cosets in $B$, the deep components of $(\overline{B}-N_{\phi'(r)}(H, \overline{B}))$ are all contained in $\overline{X}$, or are all contained in $\overline{X}^*$, as claimed.
We shall call this argument (*), for future reference.

Thus, for every $r >0$, the deep components of $(\overline{B}-N_{\phi'(r)}(H, \overline{B}))$ are all contained in $\overline{X}$ or are all contained in $\overline{X}^*$; assume that they are all contained in $\overline{X}$.
We will use this to show that $(X^* \cap X_B)$ is $H$--finite, i.e. that $X_B \stackrel{H}{\subseteq} X$.
By considering $X^*$ instead, it follows that if all deep components are instead contained in $\overline{X}^*$ then $(X \cap X_B)$ is $H$--finite, as desired.

All deep components of $(\overline{B}-N_{\phi'(r)}(H, \overline{B}))$ are contained in $\overline{X}$, so $(\overline{X}^* \cap \overline{B})$ is contained in the union of $N_{\phi'(r)}(H, \overline{B})$ with the shallow complementary components in $\overline{B}$, i.e. for $r_0= (\phi'(r)+m_1(\phi'(r)))$, we have $(\overline{X}^* \cap \overline{B}) \subseteq N_{r_0}(H, \overline{B}) \subseteq N_{r_0}(H)$.

Let $b_1 \in T_B-\{ e\}$ and consider $b_1H$.
Recall that $Comm_B(H)=H$, hence $b_1 \notin Comm_B(H)$.
Note that $b_1H$ is 2--separating, and by Lemma \ref{deep_for_subgroups}, both $H$ and $b_1H$ satisfy the deep condition.
Thus it follows from Lemma \ref{cis} that $b_1H$ is not contained in any uniform neighborhood of $H$.
Hence for any $\rho' \geq 0$, there is some $g \in b_1H$ such that $B_{\rho'} (g) \cap H = \emptyset$.

Note that if $r<2$, then $\delta X$ does not meet $b_1a\overline{B}$, so a simplified version of our earlier argument shows that $b_1a\overline{B}\subseteq \overline{X}$.

Assume instead that $r \geq 2$.
Part of the argument (*) shows that $(\delta X \cap b_1a\overline{B}) \subseteq N_{\phi'(r-2)}(b_1aH, b_1a\overline{B})$, hence each component of $(b_1a\overline{B}-N_{\phi'(r-2)}(b_1aH, b_1a\overline{B}))$ is entirely contained in $\overline{X}$ or $\overline{X}^*$.
Let $\rho' > \max\{ r_0, r+m_0(\phi'(r-2))+1\}$ and let $g \in b_1H$ be such that $B_{\rho'}(g) \cap H = \emptyset$.
As $g \in B$, $g \notin N_{r_0}(H)$, we have that $g \in X$.
As $\rho'>(r+1)$, $ga \notin \delta X \subseteq N_r(H)$, so the edge $[g,ga]$ is not contained in $\delta X$, and it follows that $ga \in X$.
As $\rho' > (r+m_0(\phi'(r-2))+1)$, $B_{m_0(\phi'(r-2))}(ga) \cap N_{r}(H) = \emptyset$, and it follows that $B_{m_0(\phi'(r-2))}(ga) \subseteq \overline{X}$.
This ball meets all deep components of $(b_1a\overline{B}-N_{\phi'(r-2)}(b_1aH, b_1a\overline{B}))$, so they all must be contained in $\overline{X}$.

Hence $(X^* \cap b_1aB)$ is contained in the union of $N_{\phi'(r-2)}(b_1aH, b_1a\overline{B})$ together with the shallow components of its complement, i.e. we have $(X^* \cap b_1a\overline{B}) \subseteq N_{\phi'(r-2)+m_1(\phi'(r-2))}(b_1aH, b_1a\overline{B})$.

In fact we can say more.
We saw above that if $\rho>0$, $g \in b_1H$ and $B_\rho (g) \cap H = \emptyset$, then $ga \in (b_1aH \cap X)$.
It follows that, for any $ga \in b_1aH$, if $B_{\rho+1}(ga) \cap H = \emptyset$, then $ga \in X$.
In other words, $(b_1aH-N_{\rho+1}(H)) \subseteq X$, and thus $(\overline{X}^*\cap b_1aH) \subseteq N_{\rho+1}(H)$.

Recall that any component of $(b_1a\overline{B}-N_{\phi'(r-2)}(b_1aH, b_1a\overline{B}))$ is entirely contained in $\overline{X}$ or in $\overline{X}^*$.
Let $S$ be a shallow component, and suppose that $S \subseteq \overline{X}^*$.
Let $fr(S, b_1a\overline{B})$ denote the frontier of $S$ in $b_1a\overline{B}$, and let $fr(S)$ denote the frontier of $S$ in $\mathscr{C}(G)$.
Then $fr(S, b_1a\overline{B}) \subseteq N_{\phi'(r-2)}(b_1aH)$, so any point $p \in fr(S)$ can be connected to some $p' \in b_1aH$ by a path of length $\leq \phi'(r-2)$.
If $p' \in \overline{X}^*$, then $p \in N_{\phi'(r-2)+\rho+1}(H)$.
If $p' \in \overline{X}$, then this path must meet $\delta X$, so $p \in N_{r+\phi'(r-2)}(H)$.
Thus, if $\rho'' > (\phi'(r-2)+\rho'+1) > (r+\phi'(r-2))$, then we have that $fr(S, b_1a\overline{B}) \subseteq N_{\rho''}(H)$.
Let $r_1 =  (\rho''+m_1(\phi'(r-2)))$ and it follows that $S \subseteq N_{r_1}(H)$.

As $r_1$ does not depend on our choice of $S$, we have $(X^* \cap b_1aB) \subseteq N_{r_1}(H)$, so in particular $(X^* \cap b_1aB)$ is $H$--finite.
Also $r_1$ does not depend on our choice of $b_1 \in T_B-\{ e\}$, so it follows that 
$$\left( X^* \cap \bigcup_{b_1\in T_B-\{ e\} } b_1aB\right) \subseteq N_{r_1}(H),$$
and therefore this intersection is $H$--finite.

As for the cosets in $X_B$ of the form $b_2ab_1aB$, note that if we translate everything in the above argument by $(b_2a)$ and replace $r$ with $(r-2)$, then we can see that $(\delta X \cap b_2ab_1a\overline{B}) \subseteq N_{r-4}(b_2ab_1aH)$, and we also get that $(X^* \cap b_2ab_1aB)$ is contained in some uniform neighborhood of $H$ that is independent of our choice of $b_2 \in T_B-\{ e\}$.
It follows that 
$$ X^* \cap \bigcup b_2ab_1aB$$
is also $H$--finite, where the union is taken over all choices of $b_1, b_2 \in T_B-\{ e\}$.

Continuing in this manner, we see that 
$$(\delta X \cap b_nab_{n-1}a\cdots b_1a\overline{B}) \subseteq N_{r-2n}(b_na\cdots b_1aH),$$ 
so for all $n> r/2$, $\delta X$ does not meet any coset of the form $b_na\cdots b_1aB$ (nor any coset $b_{n+1}ab_na\cdots b_1A$).
An analogous argument to the discussion of $r<2$ above gives that these cosets are entirely contained in $\overline{X}$.
Also we get that for every $n\leq r/2$,
$$X^* \cap \bigcup  b_na\cdots b_1aB$$ 
is $H$-finite, where the union is taken over all choices $b_1,\ldots, b_n\in T_B-\{ e\}$.
Note that $X_B$ is the union of the cosets of the form $b_na\ldots b_1aB$, and hence $X^* \cap X_B$ is $H$-finite, as desired.

\end{proof}

Similar methods prove the following.

\begin{prop}\label{prop2}
Suppose that $G=A*_HB$ for $A$, $B$ finitely generated.
If $Comm_A(H)=H=Comm_B(H)$ and $H$ has interlaced cosets in $A$ and $B$, then $\tilde{e}(G,H)=2$.
\end{prop}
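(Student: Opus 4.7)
My plan is to follow the strategy of Proposition~\ref{prop1} closely, showing that $\{[X_B],[G]\}$ generates $(\mathcal{P}(G)/\mathcal{F}_H(G))^G$ over $\mathbb{F}_2$, so that $\tilde{e}(G,H)=2$. Given any $X$ with $\delta X\subseteq N_r(H)$ representing an element of this fixed subspace, I show that $X$ is $H$--almost equal to one of $G,\emptyset,X_B,X_B^*$. Unlike in Proposition~\ref{prop1}, both $[A:H]$ and $[B:H]$ are infinite here (since $Comm_A(H)=H$ together with $A\neq H$ forces $[A:H]=\infty$, and similarly for $B$), so the argument must treat $\overline{A}$ and $\overline{B}$ symmetrically.

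The first step is to establish, as in argument (*) of the proof of Proposition~\ref{prop1}, a dichotomy: the deep components of $(\overline{B}-N_{\phi'(r)}(H,\overline{B}))$ are either all contained in $\overline{X}$ or all contained in $\overline{X}^*$, and symmetrically in $\overline{A}$. By interlaced cosets of $H$ in $B$, it suffices to prove the local claim that any two components $C,C'$ meeting a common coset $bH$ share the classification. Using $Comm_B(H)=H$ together with Lemma~\ref{cis} and Lemma~\ref{deep_for_subgroups}, the coset $bH$ is not contained in any uniform neighborhood of $H$, so I select points $g\in bH\cap C$, $g'\in bH\cap C'$ arbitrarily far from $H$; from each I follow an edge labeled by a generator $s\in S_A\setminus S_H$ into a translate $ba'\overline{B}$, and use $deep(m_0)$ to produce a path within $ba'\overline{B}$ (or a chain of such translates when $g$ and $g'$ land in different $a'$--cosets) that avoids $\delta X$ and joins them. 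The symmetric argument in $\overline{A}$ uses interlaced cosets of $H$ in $A$ and $Comm_A(H)=H$.

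The second step is to propagate the two classifications consistently through the Bass--Serre tree. For every nontrivial $a\in T_A$ and $b\in T_B$, neither $aH$ nor $bH$ is contained in a uniform neighborhood of $H$, supplying test points in each nontrivial coset arbitrarily far from $H$, which lie in $\overline{X}$ or $\overline{X}^*$ according to the first-step classification of the ambient $\overline{A}$ or $\overline{B}$. An edge from such a test point extends the classification to each adjacent translate $ba'\overline{B}$ or $ab'\overline{A}$, and by induction on tree depth it propagates to all translates. As in the final steps of Proposition~\ref{prop1}, this yields (in the case where the deep components of $\overline{B}$ lie in $\overline{X}$) that $X^*$ meets each depth--$n$ translate $c_1 c_2\cdots c_n\overline{B}$, with the $c_i$ alternating between $T_A\setminus\{e\}$ and $T_B\setminus\{e\}$, in a subset contained in a uniform neighborhood of $H$ whose radius decreases linearly with $n$; for $n$ sufficiently large the intersection is empty, and summing gives that $X^*\cap X_B$ is $H$--finite. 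The symmetric argument on the $A$--side controls $X\cap X_B^*$ or $X^*\cap X_B^*$, and the four combinations show that $X$ is $H$--almost equal to one of $G,\emptyset,X_B,X_B^*$.

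The main obstacle compared to Proposition~\ref{prop1} is that $H$ is normal in neither $A$ nor $B$ (since $Comm_A(H)=Comm_B(H)=H$ with $A,B\neq H$), so the clean identity $bHa=baH$ used in argument (*) of Proposition~\ref{prop1} is unavailable: different choices of $g\in bH$ map under right multiplication by a generator $s\in S_A$ into possibly different cosets $ba'H$. The remedy is to chain through multiple translates using interlaced cosets of $H$ in $A$ applied inside $b\overline{A}$, combined with $deep(m_0)$ to find bridging deep components of $ba'\overline{B}-N_?(ba'H,ba'\overline{B})$ that avoid $\delta X$; uniformity of the propagation constants over the infinitely many cosets in $T_A$ and $T_B$ at each tree level follows from the commensurizer hypotheses (which produce test points arbitrarily deep in every nontrivial coset) together with local finiteness of $\mathscr{C}(A)$ and $\mathscr{C}(B)$ and the linear expansion bound in Lemma~\ref{inclusion_is_wup}.
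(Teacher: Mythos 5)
Your overall plan is the right one: mirror Proposition~\ref{prop1}, prove the dichotomy for deep components of $(\overline{B}-N_{\phi'(r)}(H,\overline{B}))$ and of $(\overline{A}-N_{\phi'(r)}(H,\overline{A}))$ separately, and then propagate through the Bass--Serre tree with the decreasing-radius argument. You also correctly identify the central obstacle: without $[A:H]=2$ one loses the identity $bHa=baH$, so two far points $g_1,g_2\in bH$ pushed by a generator $s\in S_A\setminus S_H$ can land in distinct cosets $ba'_1H,ba'_2H$, hence in distinct translates $ba'_1\overline{B}$ and $ba'_2\overline{B}$.

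The gap is in your proposed remedy. The translates $ba'_1\overline{B}$ and $ba'_2\overline{B}$ are not adjacent to each other; both hang off $b\overline{A}$, and every path between them re-enters $b\overline{A}$. It is precisely the ability to cross $b\overline{A}$ while staying away from $\delta X$ that needs to be justified, and "a chain of such translates" does not supply a mechanism for doing so. Your invocation of interlaced cosets of $H$ in $A$ is the right instinct, but interlacing in $A$ connects \emph{deep components} of $(b\overline{A}-N_\rho(bH,b\overline{A}))$ via cosets $ba'H$; it says nothing directly about the points $g_1,g_2$, which lie on $bH$ itself. You still need a reason that both $g_i$ can reach a common such deep component (or a chain of them whose bridging cosets can be crossed far from $H$), and for that one needs the deep condition for $H$ \emph{inside $\mathscr{C}(A)$}, not merely inside $\mathscr{C}(G)$; neither the stated hypotheses of the proposition nor your argument supplies this explicitly, and Lemma~\ref{deep_for_subgroups} as stated only gives the deep condition in $\mathscr{C}(G)$ from the hypothesis on $\tilde{e}(G,K)$. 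A cleaner bridge, once that deep condition is in hand, avoids the $ba'\overline{B}$ translates entirely: observe that every $\mathscr{C}(G)$-geodesic from $b\overline{A}$ to $H$ passes through $bH$, so $\delta X\cap b\overline{A}\subseteq N_{\phi'(r)}(bH,b\overline{A})$; then, applying $deep(m_0^A)$ to $bH$ in $b\overline{A}\cong\mathscr{C}(A)$, the balls $B_{m_0^A(\rho)}(g_i,b\overline{A})$ both meet any deep component $D$ of $(b\overline{A}-N_\rho(bH,b\overline{A}))$, these balls and $D$ all miss $\delta X$ when $g_1,g_2$ are far from $H$ and $\rho\ge\phi'(r)$, and hence $g_1\sim g_2$ in a single bridge through $b\overline{A}$. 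Until the deep condition for $H$ inside $\mathscr{C}(A)$ and $\mathscr{C}(B)$ is extracted from (or added to) the hypotheses and the bridging mechanism is made precise, the first step of your proof is not established; the propagation and decreasing-radius steps that follow appear to be in good order given that first step.
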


\begin{prop}\label{prop3}
Suppose $G=A*_H$, with $A$ finitely generated and notation from the proof of Theorem \ref{etilde_equals_2}.
If $Comm_A(H_1)=H_1$, $Comm_A(H_2)=H_2$, $t \notin Comm_G(H_1)$, and both $H_1$ and $H_2$ have interlaced cosets in $A$, then $\tilde{e}(G,H_1)=2$.
\end{prop}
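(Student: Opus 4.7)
The plan is to adapt the proof of Proposition \ref{prop1} to the HNN extension setting. From the proof of Theorem \ref{etilde_equals_2}, the set
$X_1 = \{ a_1t^{\epsilon_1}\cdots a_n t^{\epsilon_n}a_{n+1} : a_1 = e,\ \epsilon_1 = 1\}$
represents a nontrivial element of $(\mathcal{P}(G)/\mathcal{F}_{H_1}(G))^G$, so $\tilde{e}(G, H_1) \geq 2$. To obtain equality it suffices to show every class in $(\mathcal{P}(G)/\mathcal{F}_{H_1}(G))^G$ has a representative from $\{ \emptyset, X_1, X_1^*, G\}$, so that this four-element set spans $(\mathcal{P}(G)/\mathcal{F}_{H_1}(G))^G$ over $\mathbb{F}_2$.

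Fix such a representative $X$ and $r \geq 0$ with $\delta X \subseteq N_r(H_1)$. By uniform distortion of $H_1 \hookrightarrow A$ (Lemma \ref{inclusion_is_wup}) there is $\phi'$ with $\delta X \cap \overline{A} \subseteq N_{\phi'(r)}(H_1, \overline{A})$. Any point of $\overline{tA}$ within $\mathscr{C}(G)$-distance $r$ of $H_1$ lies within $\overline{tA}$-distance $r-1$ of $tH_2 = H_1 t$, so uniform distortion of $H_2 \hookrightarrow A$ analogously produces $\psi$ with $\delta X \cap \overline{tA} \subseteq N_{\psi(r-1)}(tH_2, \overline{tA})$. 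Mimicking the argument labelled (*) in the proof of Proposition \ref{prop1}: since $H_1$ has interlaced cosets in $A$ and $Comm_A(H_1) = H_1$, the deep components of $\overline{A} - N_{\phi'(r)}(H_1, \overline{A})$ are all contained in $\overline{X}$ or all in $\overline{X}^*$. The parallel statement with respect to $tH_2$ in $\overline{tA}$ uses the interlacing and commensurizer hypotheses on $H_2$ to give the same dichotomy, and analogously inside each coset $g\overline{A}$.

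The second step is to propagate these local side-assignments consistently across the Bass-Serre tree of $G = A *_H$, forcing $X$ to be $H_1$-almost equal to one of $\emptyset, X_1, X_1^*, G$. Here the hypothesis $t \notin Comm_G(H_1)$ is essential: by Lemma \ref{comm_char} it is equivalent to $d_{Haus}(H_1, tH_1) = \infty$ in $\mathscr{C}(G)$, so $H_1$ and $H_2$ are non-commensurable subgroups of $A$ with infinite Hausdorff distance in $\overline{A}$. This lets one find, for every $\rho$, a vertex $a \in A$ with $d_{\overline{A}}(a, H_1) > \rho$ and $d_{\overline{A}}(a, H_2) > \rho$; the edge $[a, at]$ in $\mathscr{C}(G)$ then avoids $\delta X$, and $at$ lies deep in $\overline{tA} - N_{\psi(r-1)}(tH_2, \overline{tA})$, so the side-assignment in $\overline{A}$ forces the one in $\overline{tA}$ (and symmetrically in $\overline{t^{-1}A}$). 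Continuing through the tree, together with an induction on normal-form length $n$ (as in the final paragraphs of the proof of Proposition \ref{prop1}) showing that the effective coset-neighborhood radius controlling $\delta X$ in $\overline{a_1 t^{\epsilon_1}\cdots a_n t^{\epsilon_n}A}$ decreases by a fixed amount at each $t^{\pm}$-step and vanishes once $n$ exceeds a linear function of $r$, one concludes that exactly one of $X + X_1$, $X + X_1^*$, $X$, or $X^*$ is $H_1$-finite.

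The main obstacle is the added combinatorial complexity of the HNN Bass-Serre tree relative to the amalgamated free product case: every coset $g\overline{A}$ has two distinct boundary families, the $H_1$-type cosets exiting through $t$-edges and the $H_2$-type cosets exiting through $t^{-1}$-edges, and the internal dichotomy in $g\overline{A}$ must be simultaneously compatible with both. The condition $t \notin Comm_G(H_1)$ is precisely what keeps the $H_1$- and $H_2$-neighborhoods well-separated inside $A$, permitting the side-propagation to remain coherent; if $t$ lay in $Comm_G(H_1)$, some $N_r(H_1)$ would swallow $H_2$ and, as in the main proof of Theorem \ref{etilde_equals_2}, a third deep complementary component would appear, yielding $\tilde{e}(G, H_1) \geq 3$ rather than $2$.
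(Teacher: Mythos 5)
The paper does not actually give a proof of Proposition~\ref{prop3}: it proves Proposition~\ref{prop1} in full and remarks that Propositions~\ref{prop2} and~\ref{prop3} ``can be proved using similar methods.'' So there is no detailed argument to compare against, but your overall plan --- take $X_1$ as a nontrivial $H_1$-almost invariant set, establish local side-assignments inside each translate $g\overline{A}$, and propagate through the Bass--Serre tree to show any $H_1$-almost invariant $X$ is $H_1$-almost equal to one of $\emptyset, X_1, X_1^*, G$ --- is the intended adaptation, and your first paragraph is on track.

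Your transfer step, though, contains a concrete error. You pick $a\in A$ with $d_{\overline{A}}(a,H_1)>\rho$ and $d_{\overline{A}}(a,H_2)>\rho$ and claim that $at$ lies deep in $\overline{tA}\setminus N_{\psi(r-1)}(tH_2,\overline{tA})$. This cannot happen: by Britton's lemma (as worked out in the proof of Theorem~\ref{etilde_equals_2}) one has $at\in tA$ only when $a\in H_1$, which contradicts $d(a,H_1)>\rho$. For general $a$ with $H_1$-transversal $a'\in T_1$, the vertex $at$ lies in $a'tA$, and moreover $at\in a'tH_2$, the \emph{boundary} coset of $a't\overline{A}$ --- so $at$ is never deep in the adjacent coset's copy of $\mathscr{C}(A)$. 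Relatedly, the asserted conclusion ``the side-assignment in $\overline{A}$ forces the one in $\overline{tA}$'' is false: for $X=X_1$ itself, the deep components of $\overline{A}\setminus N_\cdot(H_1,\overline{A})$ lie in $\overline{X}^*$ while those of $\overline{tA}\setminus N_\cdot(tH_2,\overline{tA})$ lie in $\overline{X}$, because $\overline{A}$ and $\overline{tA}$ sit on opposite sides of the tree-edge stabilized by $H_1$. The two side-assignments must be fixed \emph{independently} --- the one for $\overline{A}$ governed by the interlacing of $H_1$ in $A$, the one for $\overline{tA}$ by the interlacing of $H_2$ in $A$ --- and their four combinations are exactly $\emptyset, X_1, X_1^*, G$. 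One then propagates outward from $\overline{A}$ into the $A$-half-tree and separately from $\overline{tA}$ into the $tA$-half-tree, never across the edge $[A,tA]$.

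The correct HNN analogue of the ``$g_ia$'' move in Proposition~\ref{prop1} is: for $a'\in T_1\setminus\{e\}$, use $Comm_A(H_1)=H_1$, Lemma~\ref{cis}, and the deep condition to produce $g\in a'H_1$ far from $H_1$, so that $g$ sits in a deep component of $\overline{A}\setminus N_{\phi'(r)}(H_1,\overline{A})$ and carries its side-assignment; then $gt\in a'tH_2$ is on the same side of $X$ (the edge $[g,gt]$ avoids $\delta X$), and the deep condition of $a'tH_2$ inside $a't\overline{A}$ spreads that sign to the deep components of $a't\overline{A}$. The hypothesis $t\notin Comm_G(H_1)$ is needed not to produce a vertex far from both $H_1$ and $H_2$ (which, as above, does not interact with $at$ as you intend), but to guarantee --- again via Lemma~\ref{cis} and the deep condition --- that the coset $H_2\subseteq\overline{A}$, across which the $t^{-1}$-edge to $t^{-1}\overline{A}$ exits, is not contained in any uniform neighborhood of $H_1$, so the $\overline{A}$ side-assignment can be carried across that edge as well. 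Your final remark about what goes wrong when $t\in Comm_G(H_1)$ is correct and matches the discussion in the proof of Theorem~\ref{etilde_equals_2}.
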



Now we will show that, under suitable hypotheses, splittings with three coends are detectable from the coarse geometry of a group, and hence are invariant under quasi-isometries.
Our results will rely on \cite{DunSwen}.

Recall that two subgroups are said to be commensurable if their intersection is of finite index in each.

\begin{thm}\label{splitting_thm0}
Let $G$ and $H$ be as in Theorem \ref{subgroup_thm}, and suppose that $H$ is finitely generated.
Then $G$ admits a splitting over a subgroup commensurable with $H$, which has three coends.
\end{thm}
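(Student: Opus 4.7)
The plan is to feed the coarse-geometric output of Theorem \ref{subgroup_thm} into the splitting machinery of Dunwoody and Swenson \cite{DunSwen}. Theorem \ref{subgroup_thm} supplies a subgroup $H$ with $d_{Haus}(Y,H)<\infty$, which by assumption is finitely generated. To translate the hypotheses into algebraic data, I first observe that $H$ satisfies the shallow condition by Lemma \ref{shallow_for_subgroups}, so Lemma \ref{n--sep_qi_inv} applied to the identity on $\mathscr{C}(G)$ moves the $3$--separating property from $Y$ to some uniform neighborhood $N_r(H)$, and then Lemma \ref{coends_and_separating} yields $\tilde{e}(G,H)\geq 3$. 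Fixing a deep complementary component $D$ of $\mathscr{C}(G)-N_r(H)$, I let $X\subseteq G$ be the vertices of $D$ together with an appropriate part of $N_r(H)$ so that $X$ is $H$-invariant; then $X$ is a nontrivial $H$--almost invariant set whose coboundary lies in a uniform neighborhood of $H$.

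The next step is to show that the collection $\{gX : g\in G\}$ is essentially nested in the sense required by \cite{DunSwen}, i.e.\ for each $g\in G$ one of $gX\stackrel{H}{\subseteq}X$, $gX\stackrel{H}{\subseteq}X^*$, $X\stackrel{H}{\subseteq}gX$, $X\stackrel{H}{\subseteq}gX^*$ holds. This is precisely where the noncrossing condition enters: the noncrossing condition on $Y$ says every translate $gY$ lies in a uniform neighborhood of a single deep component of $\mathscr{C}(G)-Y$, and transferring across the finite Hausdorff distance between $Y$ and $H$ places $gH$ (hence the frontier of $gX$) in a uniform neighborhood of a single deep component of $\mathscr{C}(G)-N_r(H)$. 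The deep condition then identifies that component with $X$ or with $X^*$ up to $H$--finite error, yielding the desired nestedness. The finiteness-of-equivalence-class argument $(*)$ from the proof of Theorem \ref{subgroup_thm} ensures only finitely many $H$--symmetric-difference types of pairs $(X,gX)$ occur, so after a bounded correction of $X$ the family is genuinely nested.

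With a genuinely nested family of nontrivial $H$--almost invariant sets in place, the algebraic torus theorem of Dunwoody-Swenson produces a minimal, fixed-point-free $G$-action without edge inversions on a simplicial tree $T$, whose edge stabilizer $H'$ is commensurable with $H$. Bass-Serre theory then converts this into the desired splitting of $G$ over $H'$. Finally, since commensurable subgroups have finite Hausdorff distance in $\mathscr{C}(G)$ (Remark \ref{comm_implies_fin_H_dist}) and hence the same number of coends by Lemmas \ref{shallow_qi_inv}, \ref{n--sep_qi_inv} and \ref{coends_and_separating}, one concludes $\tilde{e}(G,H')=\tilde{e}(G,H)\geq 3$, so the splitting has three coends.

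The principal obstacle is the second step: carefully translating the geometric statement ``$gY$ is contained in a uniform neighborhood of a single deep component of $\mathscr{C}(G)-Y$'' into the sharp algebraic nestedness statement for the sets $gX$, and controlling the inevitable $H$--finite error so that the perturbed family satisfies the exact hypotheses in \cite{DunSwen}. A secondary subtlety is identifying the edge stabilizer as commensurable with $H$ itself rather than merely with some infinite-index subgroup; this uses the deep condition and the fact that $H$ is finitely generated, the latter being what lets one promote an $\mathbb{R}$-tree or poset-tree action to a simplicial one with controllable edge stabilizers.
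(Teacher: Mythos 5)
Your proposal takes a genuinely different route that does not quite work, and it misses the key hypothesis that drives the paper's application of Dunwoody--Swenson.

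The Dunwoody--Swenson result that the paper invokes is Theorem 3.4 of \cite{DunSwen} (stated as Theorem \ref{DunSwen_thm}), whose hypotheses are: (i) a nontrivial $H_1$--almost invariant set $B$ with the \emph{right-invariance} $BH_1=B$, and (ii) that $e(G,K)=1$ for every infinite-index subgroup $K$ of $H_1$. There is no nestedness hypothesis. The condition (ii) is the substitute for nestedness, and establishing it is the real content: this is Lemma \ref{spl_lem1}, which is proved from the deep condition on $H$ by an argument parallel to Lemma \ref{cis}. Your proposal never establishes any version of (ii). You instead try to prove that $\{gX\}$ is nested, which you yourself flag as the ``principal obstacle''; that route is not the one the machinery here needs, and in any case you do not say how the noncrossing condition actually yields the four-alternative nestedness for the sets $gX$ rather than for the spaces $gY$, a translation that is far from formal.

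There is also a concrete gap in your construction of the almost-invariant set. You take $X$ to be the vertex set of a deep component $D$ (padded by part of $N_r(H)$) and claim it is ``$H$-invariant.'' Two problems: first, $H$ may only permute the deep components, so one must pass to the finite-index subgroup $H_1\leq H$ stabilizing each of them; second, and more seriously, the required equation is $XH_1=X$, i.e.\ invariance under \emph{right} multiplication, and this does not follow from $D$ being a component of $\mathscr{C}(G)-N_r(H)$, since right multiplication is not an isometry of the Cayley graph. The paper's Lemma \ref{spl_lem2} circumvents this with a different set, $B=\{g\in G: gN\subseteq N_k(U)\}$, for which $BH_1=H_1B=B$ is manifest precisely because $H_1$ stabilizes both $N$ and $U$. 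Your set $X$ does not have this property, so you cannot feed it into the Dunwoody--Swenson theorem as stated.

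Your final step --- deducing $\tilde e(G,H')\geq 3$ from commensurability with $H$ via Remark \ref{comm_implies_fin_H_dist} and the lemmas of Section \ref{terminology_section} --- is fine and matches the paper. But without Lemma \ref{spl_lem1} and a correct construction in the spirit of Lemma \ref{spl_lem2}, the argument does not go through.
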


In Lemma \ref{deep_for_subgroups} we showed that if, for all infinite index subgroups $K$ of $H$, $\tilde{e}(G,K)=1$, then $H$ satisfies the deep condition.
Towards the proof of Theorem \ref{splitting_thm0}, we have the following, the proof of which shows the converse to this.

\begin{lem}\label{spl_lem1}
Let $G$ and $H$ be as in Theorem \ref{subgroup_thm}.  
Then, for all infinite index subgroups $K$ of $H$, $e(G,K)=1$.
\end{lem}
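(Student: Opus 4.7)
The plan is to prove the stronger statement that $\tilde{e}(G,K)\leq 1$ for every infinite index subgroup $K$ of $H$, which by Lemma \ref{coends_and_separating} (i.e. $e\leq \tilde{e}$) together with the fact that $[G:K]\geq [H:K]=\infty$ makes the quotient graph $K\backslash \mathscr{C}(G)$ infinite, will yield $e(G,K)=1$. The argument runs by contradiction: assuming some infinite-index $K\leq H$ has $\tilde{e}(G,K)\geq 2$, I will show that $K$ is coarsely dense in $H$ inside $\mathscr{C}(G)$, forcing $[H:K]<\infty$.

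First I would invoke Lemma \ref{coends_and_separating} to pick $R\geq 0$ with $N_R(K)$ being 2-separating in $\mathscr{C}(G)$, and use Lemma \ref{shallow_for_subgroups} to note that $K$, and hence $N_R(K)$, satisfies the shallow condition. Since $K\subseteq H\subseteq N_{r_0}(Y)$ for $r_0=d_{Haus}(Y,H)$, we have $N_R(K)\subseteq N_{R+r_0}(Y)$. The hypotheses of Theorem \ref{subgroup_thm} give that $Y$ satisfies $deep(m_0)$, the shallow and 3-separating (so in particular 2-separating) conditions, so I would apply Lemma \ref{cis} with this $Y$ and with $Y':=N_R(K)$: note that the proof of Lemma \ref{cis} actually only makes use of $Y$ (not $Y'$) satisfying the deep condition, so the application is legitimate. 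This produces a constant $r_1=r_1(R+r_0)$ with $Y\subseteq N_{r_1}(N_R(K)) = N_{r_1+R}(K)$, and therefore $H\subseteq N_{s}(K)$ in $\mathscr{C}(G)$, where $s:=r_1+R+r_0$.

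To close the argument, I would translate this coarse containment into a bound on the index $[H:K]$: every $h\in H$ can be written as $h=kg$ for some $k\in K$ and some $g\in\mathscr{C}(G)$ with $|g|_G\leq s$; since $k\in K\subseteq H$, also $g=k^{-1}h\in H$, so $g\in H\cap B_s(e)$. Since $\mathscr{C}(G)$ is locally finite, $H\cap B_s(e)$ is finite, so $H$ decomposes into finitely many right cosets $Kg_1,\ldots,Kg_N$ of $K$, giving $[H:K]<\infty$. This contradicts the hypothesis that $K$ has infinite index in $H$, completing the reductio.

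The only subtle point is the applicability of Lemma \ref{cis} to $Y'=N_R(K)$, since a priori $N_R(K)$ is not known to satisfy the deep condition (indeed, a converse to Lemma \ref{deep_for_subgroups} is essentially what we are proving); however, as noted above, inspection of the proof of Lemma \ref{cis} shows the deep condition is only needed on $Y$, not on $Y'$, so no issue arises. Everything else is either a direct quotation of an already-proved lemma or the elementary coset-counting argument at the end.
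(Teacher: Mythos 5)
Your proof is correct and follows essentially the same route as the paper's. The paper's proof also reduces to showing $\tilde{e}(G,K)\leq 1$, picks $R$ with $N_R(K)$ 2--separating, and then reruns the argument of Lemma~\ref{cis} (explicitly: ``Next, we follow the argument from the proof of Lemma~\ref{cis}'') with $H$ in the role of $Y$ and $N_R(K)$ in the role of $Y'$, which is the same move you make with $Y$ itself --- in both cases resting on the observation you made explicit, that the proof of Lemma~\ref{cis} only needs the deep and shallow conditions on the larger set. The paper first transfers the deep condition from $Y$ to $H$ via Lemma~\ref{deep_qi_inv} and then stops partway through the Lemma~\ref{cis} argument, contradicting $H$'s deep condition directly; you instead run the argument to its stated conclusion $Y\subseteq N_{r_1}(N_R(K))$ and finish with the elementary coset count $H\subseteq N_s(K)\Rightarrow [H:K]<\infty$, which is a modest streamlining. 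One small citation slip: the inequality $e(G,K)\leq\tilde{e}(G,K)$ is not Lemma~\ref{coends_and_separating} but the Kropholler--Roller result recalled in Section~\ref{splittings_section}; this does not affect the argument.
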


\begin{proof}
Let $G,H$ and $Y$ be as in Theorem \ref{subgroup_thm}, so $Y$ satisfies the deep, shallow, 3--separating and noncrossing conditions, and $d_{Haus}(Y,H)<\infty$.
By Lemma \ref{shallow_for_subgroups}, $H$ satisfies the shallow condition.
Note that there is a quasi-isometry of $\mathscr{C}(G)$ to itself that takes $Y$ to $H$, and hence by Lemma \ref{deep_qi_inv}, $H$ satisfies the deep condition --- say $deep(m_0')$.

Note that $e(G,K)=0$ if and only if $[G:K]<\infty$, so if $K$ is an infinite index subgroup of $H$ then $e(G,K)\geq 1$.
As $e(G,K) \leq \tilde{e}(G,K)$ for any subgroup $K$ of $G$, the lemma will follow if we can show that $\tilde{e}(G,K) \leq 1$ for any infinite index subgroup $K$ of $H$.
Suppose for a contradiction that $H$ contains an infinite index subgroup $K$ such that $\tilde{e}(G,K) > 1$.
Then Lemma \ref{coends_and_separating} implies that there is some $R>0$ such that $N_R(K)$ is 2--separating.  

Next, we follow the argument from the proof of Lemma \ref{cis}.
Suppose in addition that there are two components of the complement of $N_R(K)$, say $C_1$ and $C_2$, such that $H$ meets each $C_i$ in a point $p_i$ that is not contained in the $m_0'(R)$--neighborhood of $N_R(K)$. 
So the $m_0'(R)$--ball about $p_i$ contained in $C_i$ for each $i$.

Also we have $N_R(H) \supseteq N_R(K)$, so the components of the complement of $N_R(H)$ are contained in the components of the complement of $N_R(K)$.  
In particular, $N_R(H)$ must have a deep complementary component that is disjoint from $C_1$ or is disjoint from $C_2$, and hence does not meet the $m_0'(R)$--ball about $p_1$ or $p_2$.
But this contradicts that $H$ satisfies $deep(m_0')$.

Thus $H$ minus the $m_0'(R)$--neighborhood of $N_R(K)$ must be contained in a single component of the complement of $N_R(K)$, say $C_1$.
It follows that $H$ is contained in the $m_0'(R)$--neighborhood of $N_R(K) \cup C_1$.
As $N_R(K)$ is 2--separating, there is a deep complementary component $C_2$ that meets $H$ only in the $m_0'(R)$--neighborhood of $N_R(K)$.
Recall that $H$ satisfies the shallow condition, so there must be a deep component $D$ of the complement of the $(R+m_0'(R))$--neighborhood of $H$ that is contained in $C_2$.  
In particular, note that $fr(D)$ is contained in a uniform neighborhood of $K$.

We note that $K$ is an infinite index subgroup of $H$ if and only if $H$ is not contained in any uniform neighborhood of $K$.  
Thus $H$ is not contained in any uniform neighborhood of $fr(D)$.
But this contradicts that $H$ satisfies the deep condition.
Hence, for any infinite index subgroup $K$ of $H$, $\tilde{e}(G,K)\leq 1$, and therefore $e(G,K)=1$.
\end{proof}

As an immediate corollary to Lemma \ref{spl_lem1}, we have:

\begin{cor}\label{spl_cor}
Let $G$ and $H$ be as in Theorem \ref{subgroup_thm}.  
Then for any subgroup $H_1$ of $H$ and any infinite index subgroup $K$ of $H_1$, $e(G,K)=1$.
\end{cor}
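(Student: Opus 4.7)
The plan is to reduce this corollary directly to Lemma \ref{spl_lem1}, since the hypotheses on $G$ and $H$ are the same as there. The key observation is a simple index computation: if $K$ is a subgroup of $H_1$, which is in turn a subgroup of $H$, then
$$[H:K] = [H:H_1]\cdot[H_1:K].$$
If $K$ has infinite index in $H_1$, i.e.\ $[H_1:K] = \infty$, then regardless of whether $[H:H_1]$ is finite or infinite, the product $[H:K]$ is infinite. Thus $K$ is an infinite index subgroup of $H$.

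Once this is established, I would simply invoke Lemma \ref{spl_lem1}, which says that every infinite index subgroup of $H$ has exactly one end relative to $G$. Applied to our $K$, this yields $e(G,K)=1$, as desired. No geometric argument beyond what was already done in the proof of Lemma \ref{spl_lem1} is needed; the corollary is a formal consequence of that lemma together with the transitivity of the index.

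There is no real obstacle here, since the corollary is essentially a bookkeeping observation. The only subtlety worth flagging is to make sure one does not accidentally require $H_1$ to have infinite index in $H$ --- the argument works equally well when $H_1$ has finite index in $H$, because the hypothesis $[H_1:K]=\infty$ alone already forces $[H:K]=\infty$. This is why the corollary can be stated without any assumption on the index of $H_1$ in $H$.
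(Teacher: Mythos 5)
Your proof is correct and takes exactly the route the paper intends: the paper states the corollary is ``immediate'' from Lemma \ref{spl_lem1}, and your index computation $[H:K] \geq [H_1:K] = \infty$ is precisely the bookkeeping that makes it immediate.
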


We will also need the following.

\begin{lem}\label{spl_lem2}
Let $G$ and $H$ be as in Theorem \ref{subgroup_thm}.  
Then there is a finite index subgroup $H_1$ of $H$ such that there is a nontrivial $H_1$--almost invariant set $B$ with $BH_1=B$.
\end{lem}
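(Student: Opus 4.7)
My plan is to construct $B$ as the union of an $H$-orbit of a deep complementary component of a sufficiently large uniform neighborhood of $H$, and to take $H_1 := H$ (vacuously of finite index in itself). For preliminary setup, $H$ satisfies the shallow condition by Lemma \ref{shallow_for_subgroups}, and since $Y$ is 3-separating with $d_{Haus}(Y,H)<\infty$, Lemma \ref{n--sep_qi_inv} yields some $R\geq d_{Haus}(Y,H)$ such that $N_R(H)$ is 3-separating. Let $D_1,\dots,D_n$ (with $n\geq 3$) denote the deep components of $\mathscr{C}(G)\setminus N_R(H)$.

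Pick a deep component $D$ and set $B:=HD=\bigcup_{h\in H} hD$. Since each $hD$ is itself a component of $\mathscr{C}(G)\setminus N_R(H)$ (as $hN_R(H)=N_R(H)$ and left translation is an isometry), edges leaving $hD$ must land in $N_R(H)$, so $\delta B\subseteq N_{R+1}(H)$ is $H$-finite; hence $B$ represents an element of $(\mathcal{P}(G)/\mathcal{F}_H(G))^G$ and is $H$-almost invariant, with $H_1B=HB=B$ tautologically. For nontriviality I would invoke the key property established inside the proof of Theorem \ref{subgroup_thm}: for each $h\in H$ and each deep component $C_j$ of $\mathscr{C}(G)\setminus Y$, $d_{Haus}(hC_j,C_j)<\infty$. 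A Hausdorff comparison then yields $hD\subseteq C_{j(D)}$ for every $h\in H$, where $j(D)$ is the unique index with $D\subseteq C_{j(D)}$: if instead $hD\subseteq C_{j'}$ for some $j'\neq j(D)$, then $hD\subseteq hC_{j(D)}\cap C_{j'}$, and any point in this intersection lies within $d_{Haus}(hC_{j(D)},C_{j(D)})$ of $C_{j(D)}$ along a path that must cross $Y$, placing $hD$ in a uniform neighborhood of $Y$ and hence of $H$, contradicting $hD$ being deep in $\mathscr{C}(G)\setminus N_R(H)$. Therefore the $H$-orbit of $D$ lies in the preimage of a single $C_{j(D)}$. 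Since $Y$ is 3-separating and each of its at least three deep components contains at least one $D_i$ (by the shallow condition on $Y$), there is a deep component $D'\subseteq B^*$ sitting inside a different $C_{j'}$, making $B^*$ $H$-infinite; likewise $B\supseteq D$ is $H$-infinite, so $B$ is nontrivial.

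The main obstacle is the Hausdorff comparison underlying the $H$-equivariance of the assignment $D\mapsto C_{j(D)}$; this is where the crucial input from Theorem \ref{subgroup_thm} is used. Regarding the condition $BH_1=B$ as literally written in the statement, with $H_1=H$ the left invariance $H_1B=B$ is built into the definition of $H_1$-almost invariance, but the literal right invariance $BH_1=B$ is not automatic (right multiplication is not an isometry of $\mathscr{C}(G)$); I read the condition as matching the left invariance following the paper's conventions, or else pass to a suitable normal finite-index subgroup if genuine right invariance is required.
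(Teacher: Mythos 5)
Your construction has a genuine gap at precisely the point you flag: the condition $BH_1=B$ is a literal right invariance, and it is \emph{not} redundant with $H_1$-almost invariance (which already imposes the left invariance $H_1B=B$). It is needed because Theorem \ref{DunSwen_thm} (Dunwoody--Swenson) explicitly requires a nontrivial $H_1$-almost invariant set $B$ satisfying $BH_1=B$, and Lemma \ref{spl_lem2} is being proved precisely to feed that theorem. Your set $B=HD$ does not satisfy $BH=B$: for $h\in H$, $Bh=HDh$, and $Dh$ is not a deep complementary component of $N_R(H)$ (right multiplication is not an isometry, $N_R(H)h\neq N_R(H)$), so $Dh\not\subseteq HD$ in general. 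Taking $H_1$ to be a normal finite-index subgroup does not repair this, since $HDH_1$ still involves right translates of $D$.

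The paper's proof is designed around exactly this obstruction, and it is here that the noncrossing hypothesis (which your argument does not use directly) enters crucially. One takes $N=N_r(H)$ large enough to be 2-separating and to satisfy $noncrossing(k)$; one lets $H_1$ be the finite-index subgroup of $H$ that stabilizes each of the finitely many deep components of $\mathscr{C}(G)\setminus N$; and one defines
$$B=\{g\in G : gN\subseteq N_k(U)\}$$
for a fixed deep component $U$. The noncrossing condition guarantees every $gN$ lands in the $k$-neighborhood of some deep component, so this is a genuine dichotomy. Right invariance is then automatic: for $h\in H_1$, $hN=N$, so $ghN=gN$ and $g\in B$ if and only if $gh\in B$, giving $BH_1=B$. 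Left invariance $H_1B=B$ holds because $H_1$ stabilizes both $N$ and $U$. Almost-invariance follows because $\delta B\subseteq N_{k+1}(N)$, and nontriviality follows from the deepness of $U$ together with the noncrossing dichotomy. Your secondary observations (that $\delta(HD)\subseteq N_{R+1}(H)$, and the Hausdorff comparison placing the $H$-orbit of $D$ inside a single $C_{j(D)}$) are correct, and your nontriviality argument is a legitimate alternative to the paper's, but the central construction needs to be replaced to secure $BH_1=B$.
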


For the proof of this lemma, we roughly follow the proof of Proposition 3.1 in \cite{DunSwen}.

\begin{proof}
Let $G$, $H$ and $Y$ be as in Theorem \ref{subgroup_thm}, and recall that $d_{Haus}(Y,H)<\infty$.
As $Y$ is 3--separating and satisfies the noncrossing condition, it follows, in particular, that there are some $r,k=k(r) \geq 0$ such that $N=N_r(H)$ has at least two deep complementary components, and satisfies $noncrossing(k)$.
We saw in the proof of Lemma \ref{spl_lem1} that $H$, hence $N$, satisfies the deep condition.
In particular, it follows that $N$ has only finitely many deep complementary components, so there is a finite index subgroup $H_1$ of $H$ that stabilizes each deep component.

Let $U$ be one of the deep components of $(\mathscr{C}(G)-N)$, and let $U^* = (\mathscr{C}(G)-U)$.
Thus, for any $g \in G$, $gN$ is contained in the $k$--neighborhood of $U$ or the $k$--neighborhood of $U^*$.
Let $B = \{ g \in G : gN \subseteq N_k(U)\}$.
Since $H_1$ stabilizes both $N$ and $U$, it follows that $BH_1 = B = H_1B$.

We claim that $B$ is $H_1$--almost invariant.  
Since $H_1B=B$, it suffices to show that $\delta B$ is $H_1$--finite.
Let $E$ be an edge in $\delta B$, so $E$ has endpoints $b \in B$ and $bs \notin B$, where $s$ is a generator of $G$.
Then $b \in bN$ and $bN \subseteq N_k(U)$, while $bs \in bsN$, with $bsN \subseteq N_k(U^*)$.
Thus $b \in N_k(U)$ and $bs \in N_k(U^*)$, so we must have $E \subseteq N_{k+1}(N)$.

As $d_{Haus}(N, H)<\infty$ and $d_{Haus}(H, H_1)<\infty$ since $[H:H_1]<\infty$, it follows that $\delta B$ is contained in a uniform neighborhood of $H_1$.
Hence $\delta B$ is $H_1$--finite, so $B$ is $H_1$--almost invariant.

It remains to prove that $B$ is nontrivial.
As $U$ is a deep component of the complement of $N$, there is a sequence of vertices $u_1, u_2, \ldots$ in $U$ such that $d(u_i, N) \to \infty$ as $i \to \infty$, and hence $\{ u_i\}$ is $H_1$-infinite.
We can assume that $u_i \notin N_k(U^*)$ for all $i$, thus $u_iN \nsubseteq N_k(U^*)$ for each $i$, so $\{ u_i\}  \subseteq B$, and hence $B$ is $H_1$--infinite.

Since $N$ is 2--separating, $U^*$ contains a deep component of the complement of $N$, and an analogous argument shows that the complement of $B$ is not $H_1$--finite.
Thus $B$ is a nontrivial $H_1$--almost invariant set such that $BH_1 = B$, as desired.
\end{proof}

Finally, Theorem 3.4 of \cite{DunSwen} is the following:

\begin{thm}{\em {\bf \cite{DunSwen}}}\label{DunSwen_thm}
Let $G$ be a finitely generated group and let $H_1$ be a finitely generated subgroup of $G$.
Suppose that, for every infinite index subgroup $K$ of $H_1$, $e(G,K)=1$.
If $G$ contains a nontrivial $H_1$--almost invariant set $B$ such that $BH_1=B$, then $G$ splits over a subgroup commensurable with $H_1$.
\end{thm}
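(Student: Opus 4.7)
The plan is to follow the standard route from an almost invariant set to a splitting: construct a $G$--tree whose edges correspond to the $G$--translates of $B$, show that an edge stabilizer is commensurable with $H_1$, and then apply Bass--Serre theory. The $H_1$--almost invariant set $B$ together with the relation $BH_1 = B$ yields a $G$--invariant system of ``walls'' $\{\{gB, gB^*\}\}_{g \in G}$ in $G$, each with coboundary contained in a uniform neighborhood of a conjugate of $H_1$. These walls play the role of a cut system in the Cayley graph, and the aim is to cubulate them.

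To construct the tree, I would proceed as follows. First, I would apply the standard uncrossing procedure (originating with Dunwoody, and refined by Scott--Swarup and others) to replace $\mathcal{B} = \{gB : g \in G\}$ by a $G$--invariant subcollection whose elements are pairwise nested modulo $H_1$--almost equality; this may require passing to a finite index subgroup of $H_1$ and adjusting $B$ accordingly, which is harmless for the final conclusion. Second, I would apply the Sageev--Dunwoody construction to the resulting nested wall system to obtain a $G$--tree $T$ whose edge orbits correspond to the $G$--orbits in the uncrossed collection. Third, using that $B$ is nontrivial, I would verify that $T$ has more than one vertex and that $G$ acts on $T$ without inversions and without a global fixed point, then pick an edge $e$ of $T$ whose stabilizer $S$ is the setwise stabilizer of the $H_1$--almost equality class of $\{B, B^*\}$. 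Since $BH_1 = B$, the subgroup $H_1$ sits inside $S$, and Bass--Serre theory then produces a splitting of $G$ over $S$.

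The main obstacle is showing that $S$ is commensurable with $H_1$ rather than strictly larger, and this is where the hypothesis that every infinite index subgroup $K$ of $H_1$ satisfies $e(G,K) = 1$ enters crucially. The argument proceeds by contradiction: if $H_1$ had infinite index in $S$, then for some $g \in S$ the intersection $K := H_1 \cap gH_1 g^{-1}$ would have infinite index in $H_1$. But $K$ stabilizes both $B$ and $gB$, and because $g$ preserves the almost-equality class of $\{B, B^*\}$ one has $gB \stackrel{H_1}{\sim} B$ or $gB \stackrel{H_1}{\sim} B^*$; restricting $B$ to the $K$--orbits then yields a nontrivial $K$--almost invariant subset of $G$, forcing $e(G, K) \geq 2$ and contradicting the hypothesis. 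Hence $[S : H_1] < \infty$, so $S$ is commensurable with $H_1$, and the splitting extracted from the action on $T$ is the desired splitting of $G$ over a subgroup commensurable with $H_1$.
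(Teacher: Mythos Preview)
The paper does not give its own proof of this statement: it is quoted verbatim as Theorem~3.4 of \cite{DunSwen} and used as a black box, so there is no argument in the paper to compare your proposal against.

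That said, your sketch has a genuine gap in the commensurability step. You assert that if $H_1$ has infinite index in the edge stabilizer $S$, then some $g\in S$ gives $K=H_1\cap gH_1g^{-1}$ of infinite index in $H_1$. This implication is false in general: if $H_1$ happens to be commensurated by $S$ (for instance if $H_1$ is normal in $S$), then every such $K$ has finite index in $H_1$, yet $[S:H_1]$ can still be infinite. So the hypothesis on $e(G,K)$ is never triggered and your contradiction does not materialize.

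There is a second problem even if such a $g$ existed. You claim that ``restricting $B$ to the $K$--orbits'' produces a nontrivial $K$--almost invariant set. But $B$ is only known to satisfy that $B+Bg'$ is $H_1$--finite for each $g'$; when $[H_1:K]=\infty$, an $H_1$--finite set need not be $K$--finite, so $B$ is not $K$--almost invariant in any obvious way, and it is unclear what ``restricting to $K$--orbits'' means or why it would yield something nontrivial with $K$--finite coboundary. The actual Dunwoody--Swenson argument handles the passage to smaller subgroups differently, using the hypothesis $e(G,K)=1$ to control how translates $gB$ can sit relative to $B$ (in particular to bound crossings and to force the stabilizer to be small), rather than by manufacturing a $K$--almost invariant set directly from $B$.
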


Theorem \ref{splitting_thm0} is immediate from this, Corollary \ref{spl_cor} and Lemma \ref{spl_lem2}.

By combining Theorem \ref{splitting_thm0} with the observation made in Remark \ref{comm_implies_fin_H_dist}, we can see that we could have originally chosen $H$ in Theorem \ref{subgroup_thm} so that $G$ splits over $H$:

\begin{thm}\label{splitting_thm}
Let $G$ be a finitely generated group and let $Y$ be a connected subset of $\mathscr{C}(G)$.
If $Y$ satisfies the deep, shallow, 3--separating and noncrossing conditions, then $G$ contains a finitely generated subgroup $H$ such that 
$$d_{Haus}(Y,H)<\infty$$
and such that $G$ has a splitting over $H$ which has three coends.
\end{thm}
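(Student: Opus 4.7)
The plan is to assemble the conclusion directly from results already established in the excerpt, with essentially no new work required. First I would apply Theorem \ref{subgroup_thm} to the subset $Y$ satisfying all four conditions, producing a subgroup $H_0$ of $G$ with $d_{Haus}(Y, H_0) < \infty$. Since $Y$ is connected by hypothesis, Corollary \ref{Y_conn_implies_H_fg} immediately gives that $H_0$ is finitely generated.

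Next I would feed $Y$ and $H_0$ into Theorem \ref{splitting_thm0} (which assumes precisely the setting of Theorem \ref{subgroup_thm} together with finite generation of the subgroup). This yields a subgroup $H$ commensurable with $H_0$ such that $G$ admits a splitting over $H$ with three coends. It remains only to check two things: that $d_{Haus}(Y, H) < \infty$ and that $H$ is finitely generated. The first follows from Remark \ref{comm_implies_fin_H_dist}, which tells us that commensurable subgroups lie at finite Hausdorff distance in $\mathscr{C}(G)$; combining $d_{Haus}(H, H_0) < \infty$ with $d_{Haus}(Y, H_0) < \infty$ via the triangle inequality gives $d_{Haus}(Y, H) < \infty$. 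Finite generation of $H$ then follows either directly from Corollary \ref{Y_conn_implies_H_fg} applied to the connected set $Y$ paired with $H$, or from the fact that $H_0 \cap H$ has finite index in both finitely generated $H_0$ and in $H$.

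There is really no main obstacle to this proof, as the statement is essentially a repackaging of Theorem \ref{splitting_thm0}. The only thing to emphasize is that replacing the subgroup produced by Theorem \ref{subgroup_thm} with a commensurable one preserves the two conclusions of Theorem \ref{subgroup_thm} (finite Hausdorff distance to $Y$ and finite generation), which is exactly what Remark \ref{comm_implies_fin_H_dist} guarantees. In fact this is precisely the observation the author flags in the sentence preceding the theorem, so the proof can be written in two or three lines citing Theorem \ref{subgroup_thm}, Corollary \ref{Y_conn_implies_H_fg}, Theorem \ref{splitting_thm0} and Remark \ref{comm_implies_fin_H_dist} in turn.
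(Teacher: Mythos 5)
Your proposal is correct and follows exactly the paper's own approach: the author explicitly notes, in the sentence preceding the theorem, that it is obtained "by combining Theorem \ref{splitting_thm0} with the observation made in Remark \ref{comm_implies_fin_H_dist}." Your four-step chain (Theorem \ref{subgroup_thm}, then Corollary \ref{Y_conn_implies_H_fg} for finite generation, then Theorem \ref{splitting_thm0} to produce a commensurable subgroup over which $G$ splits with three coends, then Remark \ref{comm_implies_fin_H_dist} plus the triangle inequality for the Hausdorff-distance conclusion) is precisely that argument, with the bookkeeping written out.
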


Lastly, the following is an immediate corollary to Theorems \ref{main_thm} and \ref{splitting_thm}.

\begin{cor}\label{splitting_cor}
Let $G$ and $G'$ be finitely generated groups and let $f\co \mathscr{C}(G) \to \mathscr{C}(G')$ be a quasi-isometry.
Suppose that $H$ is a finitely generated subgroup of $G$ such that for any infinite index subgroup $K$ of $H$, $\tilde{e}(G,K)=1$.
Suppose also that $G$ admits a splitting over $H$ that has three coends.

If sufficiently large uniform neighborhoods of $f(H)$ in $\mathscr{C}(G')$ satisfy the noncrossing condition, then $G'$ contains a finitely generated subgroup $H'$ such that $H'$ is quasi-isometric to $H$, $d_{Haus}(H', f(H))<\infty,$ and $G'$ admits a splitting over $H'$ that has three coends.
\end{cor}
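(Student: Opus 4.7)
The plan is to reuse the setup from the proof of Theorem~\ref{main_thm} but to invoke Theorem~\ref{splitting_thm} at the end in place of Theorem~\ref{subgroup_thm}. Theorem~\ref{splitting_thm} takes the same input as Theorem~\ref{subgroup_thm} (a connected subset of $\mathscr{C}(G')$ satisfying the deep, shallow, 3--separating and noncrossing conditions) but yields not only a nearby subgroup but also the desired splitting with three coends. The quasi-isometry between $H$ and the subgroup produced will then come from Proposition~\ref{ci_subgps_are_qi}.

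First I would verify that $H$ itself satisfies the deep, shallow and 3--separating conditions inside $\mathscr{C}(G)$. Lemma~\ref{shallow_for_subgroups} gives shallow immediately. The hypothesis that $\tilde{e}(G,K) = 1$ for every infinite index subgroup $K \leq H$ is precisely what Lemma~\ref{deep_for_subgroups} requires in order to conclude the deep condition for $H$. The hypothesis that the splitting of $G$ over $H$ has three coends means, by definition, that $\tilde{e}(G,H) \geq 3$, so by Lemma~\ref{coends_and_separating} there is some $R_0$ such that $N_{R_0}(H)$ is 3--separating in $\mathscr{C}(G)$.

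Next I would set $Y := N_R(f(H)) \subseteq \mathscr{C}(G')$ and choose $R$ large enough so that all of the following hold simultaneously: $Y$ is connected (possible because $H$ is finitely generated, hence coarsely 0--connected in $\mathscr{C}(G)$ by Proposition~\ref{subgroup_fg}, and this survives the quasi-isometry $f$); $Y$ satisfies the shallow condition, via Lemma~\ref{shallow_qi_inv}; $Y$ is 3--separating, via Lemma~\ref{n--sep_qi_inv} applied to the 3--separating set $N_{R_0}(H)$; $Y$ satisfies the deep condition, via Lemma~\ref{deep_qi_inv} (using that both $H$ and $Y$ satisfy shallow); and $Y$ satisfies the noncrossing condition, by the corollary's hypothesis. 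Enlarging $R$ preserves each of these properties, so taking $R$ to be the maximum of finitely many thresholds works.

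With $Y$ in hand, Theorem~\ref{splitting_thm} applied to $Y \subseteq \mathscr{C}(G')$ directly produces a finitely generated subgroup $H' \leq G'$ with $d_{Haus}(Y, H') < \infty$, together with a splitting of $G'$ over $H'$ having three coends. Since $Y$ is a uniform neighborhood of $f(H)$, it follows that $d_{Haus}(H', f(H)) < \infty$. Finally, since $H$ and $H'$ are both finitely generated and $d_{Haus}(H', f(H)) < \infty$, Proposition~\ref{ci_subgps_are_qi} yields that $H'$ is quasi-isometric to $H$. The only mild obstacle is checking that all four coarse-geometric conditions, together with connectedness, can be arranged on the same subset $Y$, but this reduces to choosing $R$ as the maximum of finitely many thresholds, so the corollary really is immediate from Theorems~\ref{main_thm} and~\ref{splitting_thm}.
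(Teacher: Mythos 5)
Your proof is correct and takes essentially the same route the paper intends: the paper declares the corollary ``immediate'' from Theorems~\ref{main_thm} and~\ref{splitting_thm}, and you correctly unpack this by re-running the setup of Theorem~\ref{main_thm} (Lemmas~\ref{shallow_for_subgroups}, \ref{deep_for_subgroups}, \ref{coends_and_separating}, then the quasi-isometry-invariance Lemmas~\ref{shallow_qi_inv}--\ref{deep_qi_inv}) to produce a suitable connected $Y = N_R(f(H))$, and then invoking Theorem~\ref{splitting_thm} in place of Theorem~\ref{subgroup_thm} to obtain both the nearby subgroup and the splitting with three coends, with Proposition~\ref{ci_subgps_are_qi} supplying the quasi-isometry between $H$ and $H'$.
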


\bibliographystyle{alpha}
\bibliography{my_biblio}

\def\cprime{$'$}
\begin{thebibliography}{Bow02}

\bibitem[BH99]{BridsonHaefliger}
Martin~R. Bridson and Andr{\'e} Haefliger.
\newblock {\em Metric spaces of non-positive curvature}, volume 319 of {\em
  Grundlehren der Mathematischen Wissenschaften [Fundamental Principles of
  Mathematical Sciences]}.
\newblock Springer-Verlag, Berlin, 1999.

\bibitem[Bow02]{BowJSJ}
Brian~H. Bowditch.
\newblock Splittings of finitely generated groups over two-ended subgroups.
\newblock {\em Trans. Amer. Math. Soc.}, 354(3):1049--1078 (electronic), 2002.

\bibitem[CH09]{CapraceHaglund}
Pierre-Emmanuel Caprace and Fr{\'e}d{\'e}ric Haglund.
\newblock On geometric flats in the {CAT}(0) realization of {C}oxeter groups
  and {T}its buildings.
\newblock {\em Canad. J. Math.}, 61(4):740--761, 2009.

\bibitem[CM09]{CapraceMonod_discrete}
Pierre-Emmanuel Caprace and Nicolas Monod.
\newblock Isometry groups of non-positively curved spaces: discrete subgroups.
\newblock {\em J. Topol.}, 2(4):701--746, 2009.

\bibitem[dlH00]{delaHarpe_book}
Pierre de~la Harpe.
\newblock {\em Topics in geometric group theory}.
\newblock Chicago Lectures in Mathematics. University of Chicago Press,
  Chicago, IL, 2000.

\bibitem[DS00]{DunSwen}
M.~J. Dunwoody and E.~L. Swenson.
\newblock The algebraic torus theorem.
\newblock {\em Invent. Math.}, 140(3):605--637, 2000.

\bibitem[Gro81]{Gromov_poly_growth}
Mikhael Gromov.
\newblock Groups of polynomial growth and expanding maps.
\newblock {\em Inst. Hautes \'Etudes Sci. Publ. Math.}, (53):53--73, 1981.

\bibitem[Gro93]{Gromov_asymptotic_invariants}
M.~Gromov.
\newblock Asymptotic invariants of infinite groups.
\newblock In {\em Geometric group theory, {V}ol.\ 2 ({S}ussex, 1991)}, volume
  182 of {\em London Math. Soc. Lecture Note Ser.}, pages 1--295. Cambridge
  Univ. Press, Cambridge, 1993.

\bibitem[Hop44]{Hopf_ends}
Heinz Hopf.
\newblock Enden offener {R}\"aume und unendliche diskontinuierliche {G}ruppen.
\newblock {\em Comment. Math. Helv.}, 16:81--100, 1944.

\bibitem[Hou74]{Houghton}
C.~H. Houghton.
\newblock Ends of locally compact groups and their coset spaces.
\newblock {\em J. Austral. Math. Soc.}, 17:274--284, 1974.
\newblock Collection of articles dedicated to the memory of Hanna Neumann, VII.

\bibitem[Hru05]{Hruska2}
G.~Christopher Hruska.
\newblock Geometric invariants of spaces with isolated flats.
\newblock {\em Topology}, 44(2):441--458, 2005.

\bibitem[Kap]{Kap_ggt_book}
Michael Kapovich.
\newblock {Lectures on Geometric Group Theory}.
\newblock Spring 2009 revision,
  www.math.ucdavis.edu/$\sim$kapovich/EPR/ggt.pdf.

\bibitem[KR89]{KrophollerRoller}
P.~H. Kropholler and M.~A. Roller.
\newblock Relative ends and duality groups.
\newblock {\em J. Pure Appl. Algebra}, 61(2):197--210, 1989.

\bibitem[Pap05]{Papa_qlines}
Panos Papasoglu.
\newblock Quasi-isometry invariance of group splittings.
\newblock {\em Ann. of Math. (2)}, 161(2):759--830, 2005.

\bibitem[Pap07]{Papa_asym_top}
Panos Papasoglu.
\newblock Group splittings and asymptotic topology.
\newblock {\em J. Reine Angew. Math.}, 602:1--16, 2007.

\bibitem[Ser77]{Serre_trees_orig}
Jean-Pierre Serre.
\newblock {\em Arbres, amalgames, {${\rm SL}_{2}$}}.
\newblock Soci\'et\'e Math\'ematique de France, Paris, 1977.
\newblock Avec un sommaire anglais, R{\'e}dig{\'e} avec la collaboration de
  Hyman Bass, Ast{\'e}risque, No. 46.

\bibitem[SS00]{ScottSwarup_intersection_numbers}
Peter Scott and Gadde~A. Swarup.
\newblock Splittings of groups and intersection numbers.
\newblock {\em Geom. Topol.}, 4:179--218 (electronic), 2000.

\bibitem[SW79]{ScottWall}
Peter Scott and Terry Wall.
\newblock Topological methods in group theory.
\newblock In {\em Homological group theory (Proc. Sympos., Durham, 1977)},
  volume~36 of {\em London Math. Soc. Lecture Note Ser.}, pages 137--203.
  Cambridge Univ. Press, Cambridge, 1979.

\bibitem[Vav]{Vav_qlines}
Diane~M. Vavrichek.
\newblock The quasi-isometry invariance of commensurizer subgroups.
\newblock arXiv:1002.1265v1.

\bibitem[Wal03]{Wall_survey}
Charles Terence~Clegg Wall.
\newblock The geometry of abstract groups and their splittings.
\newblock {\em Rev. Mat. Complut.}, 16(1):5--101, 2003.

\end{thebibliography}

\end{document}